\newtheorem{thm}{Theorem}[section]
\newtheorem{prop}[thm]{Proposition}
\newtheorem{lemma}[thm]{Lemma}
\newtheorem{cor}[thm]{Corollary}
\theoremstyle{plain}
\numberwithin{equation}{section}
\theoremstyle{remark}
\newtheorem{remark}[thm]{\bf Remark}
\newtheorem{remarks}[thm]{\bf Remarks}
\newcommand{\Comment}[1]{}
\newcommand{\rbr}[1]{\left( {#1} \right)}
\newcommand{\cbr}[1]{\left\{ {#1} \right\}}
\newcommand{\abs}[1]{\left| {#1} \right|}
\newcommand{\eq}[1]{(\ref{#1})}
\def\divides{\mid}
\def\ndivides{\hspace{-0.25 em} \not \hspace{.25 em} \mid}
\title{A variant of the Bombieri-Vinogradov theorem with explicit constants and applications}
\author{Amir Akbary and Kyle Hambrook}
\thanks{Research of the authors is partially supported by NSERC and ULRF}
\keywords{\noindent Bombieri-Vinogradov Theorem, divisors of shifted primes}
\subjclass[2010]{11N13}
\address{Department of Mathematics and Computer Science \\
        University of Lethbridge \\
        Lethbridge, AB T1K 3M4 \\
        Canada}
\email{amir.akbary@uleth.ca}
\address{Department of Mathematics\\
1984 Mathematics Road\\
University of British Columbia\\
Vancouver, BC V6T 1Z2\\
Canada}
\email{hambrook@math.ubc.ca}
\begin{document}

\begin{abstract}
We give an effective version with explicit constants of a mean value theorem of Vaughan related to
the values of $\psi(y, \chi)$, the twisted summatory function associated to the von Mangoldt function $\Lambda$ and a Dirichlet character $\chi$.
As a consequence of this
result we prove an effective variant of the Bombieri-Vinogradov theorem with explicit constants.  
This effective variant has the potential to provide explicit results in many problems. We give examples of
such results in several number theoretical problems related to shifted primes.
\end{abstract}
\maketitle

\section{Introduction}
For integers $a$ and $q \geq 1$, let 
$$
\psi(x; q, a)=\sum_{\substack{n\leq x \\ {n\equiv a\bmod{q}}}} \Lambda(n),
$$
where $\Lambda(n)$ is the von Mangoldt function. 
If $q=1$, $\psi(x;1,a)=\psi(x):=\sum_{\substack{n\leq x }} \Lambda(n)$.
The prime number theorem in arithmetic progressions
is the assertion that
$$ \psi(x; q, a)\sim \frac{x}{\phi(q)},$$
as $x\rightarrow \infty$, where $\phi(q)$ is the Euler function. The Bombieri-Vinogradov theorem is an estimate for the error terms in the prime number theorem for arithmetic progressions averaged over all moduli $q$ up to almost $x^{1/2}$. 
The version given in 1965 by Bombieri \cite{B} states that if $A$ is a given positive number and $Q\leq x^{1/2} (\log{x})^{-B}$ where
$B=3A+23$, then
$$\sum_{q \leq Q} \max_{2 \leq y \leq x} \mathop{\max_{a}}_{(a,q)=1} \abs{ \psi(y,q,a) - \frac{y}{\phi(q)} } \ll_A \frac{x}{(\log{x})^A}.$$
Over the years the value of $B$ has been improved. For instance, Dress, Iwaniec, and Tenenbaum \cite{DIT} proved $B=A+5/2$ is valid. The implied constant in the Bombieri-Vinogradov theorem is not effective. This is ultimately due to the need to allow for the possible existence of exceptional characters associated to those $q$ in the sum having small prime factors.

The Bombieri-Vinogradov theorem has been used to prove a number of interesting results. One such result is the following estimate on the number of primes whose shifts have large prime factors. For the statement, we introduce the notation $P(n)$ for the greatest prime divisor of an integer $n\neq 0, \pm 1$. Also, throughout the paper $p$ always denotes a prime number.
\begin{thm}\label{Theorem 1.1 of Harman}
There is a $\theta_0 > 1/2$ such that for all non-zero $a \in \mathbb{Z}$ and all $\theta \leq \theta_0$ there are positive constants $X_0(a,\theta)$ and $\delta(\theta)$ for which
$$
\sum_{\substack{p \leq x \\ P(p+a) > x^{\theta}}} 1 > \delta(\theta) \frac{x}{\log x}
$$
whenever $x \geq X_0(a,\theta)$.
\end{thm}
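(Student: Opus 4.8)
The plan is to obtain a lower bound for $S(\theta):=\#\{p\le x: P(p+a)>x^{\theta}\}$ of the shape $S(\theta)>\delta(\theta)\,x/\log x$. First I would exploit that $\theta>1/2$: for $x$ large, no integer in $[2,x+\abs{a}]$ has two distinct prime factors exceeding $x^{\theta}$ (their product would exceed $x^{2\theta}>x+\abs{a}$), so $P(p+a)>x^{\theta}$ is equivalent to $p+a$ having a (then unique) prime factor $\ell>x^{\theta}$, and writing $p+a=\ell m$ forces $m=(p+a)/\ell<x^{1-\theta}<x^{1/2}$. Hence, up to $O(1)$ exceptional $p$,
\[
S(\theta)=\sum_{m<x^{1-\theta}}\#\cbr{\,q\text{ prime}: q>x^{\theta},\ mq-a\le x,\ mq-a\text{ prime}\,}=\pi(x)-\#\cbr{\,p\le x: p+a\text{ is }x^{\theta}\text{-smooth}\,}.
\]
The first expression is what links the problem to Bombieri--Vinogradov, since the auxiliary variable $m$ ranges only up to $x^{1/2}$; by the second, it is enough to prove $\#\{p\le x: P(p+a)\le x^{\theta}\}\le(1-\delta(\theta))\,\pi(x)$.

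Next I would sieve the sequence $\mathcal{A}=(p+a)_{p\le x}$ for its number of $x^{\theta}$-smooth members. For squarefree $d$ with $(d,a)=1$ one has $\#\{p\le x: d\mid p+a\}=\pi(x;d,-a)$, so the sifting density is $1/\phi(\ell)$ (a one-dimensional sieve) and the remainder attached to $d$ is $r_d:=\pi(x;d,-a)-\pi(x)/\phi(d)$. Running the Rosser--Iwaniec linear sieve --- and, to push the exponent as far as possible, iterating the Buchstab identity so as to exploit the pieces where $P(p+a)$ lies just below $x^{\theta}$ --- I would reach a bound of the form
\[
\#\cbr{\,p\le x: p+a\text{ is }x^{\theta}\text{-smooth}\,}\le\rbr{G(\theta)+o(1)}\pi(x)+\sum_{d\le D}\abs{r_d},
\]
where $G(\theta)$ is assembled from the sieve functions and from Mertens-type products $\prod_{z_1<\ell\le z_2}(1-1/\phi(\ell))\sim\log z_1/\log z_2$, and $D$ is the level of distribution available; one then checks $G(\theta)<1$ for $\theta$ below a threshold $\theta_0$. (For a fixed value of the large prime factor the inner count in the first display is a twin-prime-type quantity, so it is only the sum over $m$, not each individual term, that is being controlled.)

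The main obstacle will be $\sum_{d\le D}\abs{r_d}$. The effective Bombieri--Vinogradov theorem with explicit constants proved in this paper gives $\sum_{d\le D}\abs{r_d}\ll_A x(\log x)^{-A}$, which for $A\ge2$ is $o(\pi(x))$, once $D\le x^{1/2}(\log x)^{-B}$; this is exactly what turns $\delta(\theta)$ and $X_0(a,\theta)$ into explicit quantities. The difficulty is that a smooth $p+a$ may carry its largest prime factor anywhere up to $x^{\theta}$, so the sieve combinatorics unavoidably call for divisors a little beyond $x^{1/2}$; with only the level $D=x^{1/2-\epsilon}$ supplied by Bombieri--Vinogradov the resulting $\theta_0$ is strictly below $1/2$, and crossing $1/2$ forces one to enlarge the level of distribution of $\mathcal{A}$ past $1/2$. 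That enlargement is not provided by Bombieri--Vinogradov but by the bilinear (Type II) estimates of Bombieri--Friedlander--Iwaniec and Fouvry--Iwaniec for primes in a fixed residue class to well-factorable moduli, which rest on Kloosterman-sum bounds and are themselves effective. The genuinely delicate step is arranging the sieve so that its weights are well-factorable and its ranges fit that bilinear input --- not the sieve main term or the Mertens bookkeeping --- after which the effective Bombieri--Vinogradov theorem of this paper makes explicit every part of the argument that lives at level $1/2$.
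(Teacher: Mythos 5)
Your opening reduction is sound, and in fact it is stronger than you give it credit for: since no $p+a\le x+\abs{a}$ can be divisible by two distinct primes exceeding $x^{\theta}>x^{1/2}$, inclusion--exclusion terminates after one step and gives the \emph{exact} identity $\#\{p\le x: P(p+a)\le x^{\theta}\}=\pi(x)-\sum_{x^{\theta}<q}\pi(x;q,-a)+O(1)$, so no sieve is needed at all --- the whole problem is to lower-bound $\sum_{q>x^{\theta}}\pi(x;q,-a)$. The core of your plan, however, rests on a false premise: that level of distribution $x^{1/2}$ can only yield $\theta_0<1/2$, so that crossing $1/2$ requires the Bombieri--Friedlander--Iwaniec bilinear estimates. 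Goldfeld's 1969 theorem already gives $\theta_0=0.583$ using nothing beyond Bombieri--Vinogradov, and this paper (following Harman) reaches $\theta_0=1-\tfrac12 e^{-1/4}=0.6105\ldots$ using only its effective BV variant and Brun--Titchmarsh. The mechanism that crosses $1/2$ is not an enlarged level of distribution but a weighted first moment: $\sum_{q\mid p+a}\log q=\log(p+a)+O(\cdot)$, so the total mass over $p\le x$ is $\vartheta(x)\sim x$; the effective BV variant shows the moduli $q\le x^{1/2-o(1)}$ absorb only $\sim x/2$ of it; Brun--Titchmarsh (via Lemma \ref{theta}) caps the contribution of $q\in(x^{1/2},x^{\theta}]$ by $-2\log(2-2\theta)x+o(x)$; the remainder, $\delta(\theta)x=\bigl(\tfrac12+2\log(2-2\theta)\bigr)x$, must come from $q>x^{\theta}$, and since each qualifying $p$ contributes at most $\log x$ to that tail, dividing by $\log x$ gives Theorem \ref{Theorem 1.1 of Harman} with $\delta(\theta)>0$ precisely for $\theta<1-\tfrac12 e^{-1/4}$. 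This is Theorem \ref{Proposition 1.8} and Corollary \ref{Corollary 1.9}(ii); the small moduli $q\le(\log x)^{B}$, where the effective variant is trivial, are handled by Brun--Titchmarsh as well.

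The sieve step you propose in its place would also fail as written. To upper-bound the $x^{\theta}$-smooth count you must sift $\{p+a\}$ by \emph{every} prime in $(x^{\theta},x]$; the sifting primes therefore run far beyond any admissible level $D\le x^{1/2}$, so the linear-sieve parameter satisfies $s=\log D/\log x<1$ and the upper bound is of size $F(s)\cdot\theta\cdot\pi(x)\gg\pi(x)$ --- no saving over the trivial bound, and no value of $\theta_0$ at all. Moreover, the BFI and Fouvry--Iwaniec estimates you invoke as a rescue are not effective in the sense required here (their treatment of small conductors rests on Siegel--Walfisz), which is exactly why the paper avoids them and why the Baker--Harman value $\theta_0=0.677$ carries an incomputable $X_0(a,\theta)$. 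Your proposal resembles the Baker--Harman route in spirit, but for the effective statement at hand it substitutes heavy, ineffective machinery for an elementary argument that already works, and misdiagnoses where the barrier at $1/2$ actually lies.
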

This was first proved by Goldfeld \cite{G} with $\theta_0 = 0.583\ldots$. The best version to date was proved by Baker and Harman \cite{BH}, who obtained $\theta_0=0.677$.
The proofs of Goldfeld and of Baker and Harman give no way of calculating $X_0(a,\theta)$
as they rely on the Bombieri-Vinogradov theorem and other ineffective results. In \cite{H}, Harman showed how to use an effective variant of the Bombieri-Vinogradov theorem (in which the moduli $q$ are restricted to the primes) to prove Theorem \ref{Theorem 1.1 of Harman} with 
$\theta_0 = 0.6105\ldots$ 
and effectively computable $\delta(\theta)$ and $X_0(a, \theta)$. While Harman asserted that calculating $X_0(a,\theta)$ is feasible, he did not provide an explicit value for $X_0(a, \theta)$ for any values of $a$ and $\theta$.

Inspired by Harman's work, in this paper we provide an effective variant of the Bombieri-Vinogradov theorem with explicit constants where $q$ varies over integers. We then apply our variant to several number theoretical problems involving shifted primes to obtain new effective results. One of our results (Corollary  \ref{Corollary 1.10} (ii)) provides  numerical values for the constants $\delta(\theta)$ and $X_0(a, \theta)$ in Harman's theorem (see \cite[Theorem 2]{H}) for $a=-1$ and $\theta=0.6$.

We now describe our effective variant in detail.


In 1977, following the ideas of I. M. Vinogradov, Vaughan \cite{V4} described a method for estimating sums of the form
$$\sum_{p\leq x} f(p),$$
where $f(n)$ is either an exponential function of the form $e^{2\pi i \alpha n}$ or a Dirichlet character $\chi(n)$ modulo $q$.
He then applied this method to give an elementary proof of the Bombieri-Vinogradov theorem \cite{V5}.
The main step in Vaughan's proof is a mean value theorem for 
$$\psi(y,\chi)= \sum_{n\leq y} \Lambda(n) \chi(n),$$
the twisted summatory function associated to the von Mangoldt function $\Lambda$ and a Dirichlet character
$\chi$. Our first theorem provides a version of Vaughan's mean value inequality with explicit constants.

\begin{thm}
\label{TMVT}
For $x \geq 4$,
\begin{align}
\label{MVT}
\sum_{q \leq Q} \frac{q}{\phi(q)} \mathop{{\sum}^{\ast}}_{\chi \hspace{0.15 em} (q)} \max_{y \leq x} \abs{ \psi(y,\chi) } <  c_0 \rbr{ 4x + 2x^{\frac{1}{2}}Q^2 + 6x^{\frac{2}{3}} Q^{\frac{3}{2}}+ 5x^{\frac{5}{6}}Q } (\log x)^{\frac{7}{2}},
\end{align}
where
\begin{align*}
c_0 
&= \frac{2^{\frac{13}{2}}}{9\pi \log{2}}\rbr{\frac{1}{3} + \frac{3}{2\log2}} \left( \frac{2 + \log( \log(2) /\log(4/3))}{\log 2} \right)A_0^{\frac{1}{2}} \\
&= 48.83236\ldots.
\end{align*}
Here
$$
A_0 = \max_{x >0} \rbr{\frac{\psi(x)}{x}} = \frac{\psi(113)}{113} = 1.03883\ldots,
$$
$Q$ is any positive real number, \Comment{$E_0= \prod_{p} \rbr{1 + \frac{1}{p(p-1)}}$, }and $\mathop{{\sum}^{\ast}}_{\hspace{-0.4 em}\chi \hspace{0.15 em} (q)}$ denotes a sum over all primitive characters $\chi$ (mod~$q$).
\end{thm}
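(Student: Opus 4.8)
The plan is to follow Vaughan's elementary route to the Bombieri--Vinogradov theorem, replacing every estimate used along the way by one with an explicit constant. Fix $x\ge 4$, put $U=V=x^{1/3}$, and apply Vaughan's identity: for $n>1$,
\[
\Lambda(n)=\mathbf{1}_{n\le U}\Lambda(n)+\sum_{\substack{d\mid n\\ d\le U}}\mu(d)\log\frac nd-\sum_{\substack{de\mid n\\ d\le U,\ e\le V}}\mu(d)\Lambda(e)+\sum_{\substack{mk=n\\ m>U,\ k>V}}\Big(\sum_{\substack{d\mid m\\ d\le U}}\mu(d)\Big)\Lambda(k).
\]
Substituting into $\psi(y,\chi)=\sum_{n\le y}\Lambda(n)\chi(n)$ and reorganising the summation expresses $\psi(y,\chi)$ as a sum of four pieces: a short twisted von~Mangoldt sum $\psi_0(y,\chi)$ of length $\le U$; two ``Type~I'' sums $\psi_{\mathrm I}(y,\chi)$ and $\psi_{\mathrm{I}'}(y,\chi)$ of the shape $\sum_{r\le R}a_r\chi(r)\sum_{m\le y/r}\chi(m)w(m)$, where $w=\log$ in one case, $w\equiv 1$ in the other, and $R\le U$, resp. $R\le UV$; and one ``Type~II'' bilinear sum $\psi_{\mathrm{II}}(y,\chi)=\sum_{m>U}\sum_{k>V}a_m\Lambda(k)\chi(mk)\mathbf{1}_{mk\le y}$ with $|a_m|\le\tau(m)$. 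By the triangle inequality it suffices to bound the contribution of each of the four pieces to the left side of \eqref{MVT}; the constant $c_0$ will be the sum of the four partial constants so obtained.

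The piece $\psi_0$ is handled trivially: $|\psi_0(y,\chi)|\le\psi(U)\le A_0U$, so, as $\sum^{*}_{\chi\,(q)}1\le\phi(q)$ and $\sum_{q\le Q}q\le\tfrac12 Q(Q+1)$, its total contribution is $\le\tfrac12 A_0U(Q^2+Q)=O(x^{1/3}Q^2)$, dominated by the $x^{1/2}Q^2$ term. For the Type~I pieces the key device is to split the outer sum at a level $Y\asymp x^{2/3}/Q$ (taking $Y=1$ once $Q>x^{2/3}$). When $r\le Y$ the inner sum over $m$ is a complete character sum of length $\le y/r$; I bound it by the P\'olya--Vinogradov inequality in an explicit form, remove the smooth weight $w$ and the outer maximum over $y$ by partial summation, sum the result trivially over $r\le Y$ (using $\sum_{e\le V}\Lambda(e)\le A_0V$ for $\psi_{\mathrm{I}'}$), and finally sum over $q\le Q$ using $\sum^{*}_{\chi\,(q)}1\le\phi(q)$ and $\sum_{q\le Q}q^{3/2}=O(Q^{5/2})$; this yields a contribution $O\!\big(YQ^{5/2}(\log x)^2\big)=O\!\big(x^{2/3}Q^{3/2}(\log x)^2\big)$, and this is precisely where the $x^{2/3}Q^{3/2}$ term of \eqref{MVT} comes from. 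When $r>Y$ the inner sum has length $\le x/Y$, and $\psi_{\mathrm I},\psi_{\mathrm{I}'}$ are bilinear forms in $r$ and $m$ with $rm\le y$; these I treat exactly as the Type~II sum below.

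The heart of the matter is the Type~II sum (and the $r>Y$ tails of the Type~I sums). For these the plan is: (i) split $m$ into $O(\log x)$ dyadic ranges $m\sim M$ (with $x^{1/3}<M\le x^{2/3}$ for the genuine Type~II sum); (ii) on each range write $\mathbf{1}_{mk\le y}$ as a truncated Perron integral of $(y/mk)^s/s$, which factors the sum into a product of two Dirichlet polynomials and, with the contour at $\Re s=1/\log x$, replaces $\max_{y\le x}$ by the innocuous factor $x^{1/\log x}=e$ at the cost of one power of $\log x$; (iii) apply the Cauchy--Schwarz inequality in $(q,\chi)$ followed by the multiplicative large sieve in the sharp form $\sum_{q\le Q}\tfrac{q}{\phi(q)}\sum^{*}_{\chi\,(q)}\big|\sum_{n\le N}b_n\chi(n)\big|^2\le(Q^2+N)\sum_{n\le N}|b_n|^2$ to each of the $m$-polynomial (length $\asymp M$) and the $k$-polynomial (length $\asymp x/M$); (iv) insert the explicit moment bounds $\sum_{m\sim M}|a_m|^2\le\sum_{m\le x}\tau(m)^2=O\!\big(x(\log x)^2\big)$ and $\sum_{k\le x/M}\Lambda(k)^2\le(\log x)\,\psi(x)\le A_0x\log x$. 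A dyadic range then contributes $O\!\big((Q^2+M)^{1/2}(Q^2+x/M)^{1/2}\,x^{1/2}(\log x)^{5/2}\big)$, and since $\sqrt{(Q^2+M)(Q^2+x/M)}\le Q^2+Q\sqrt M+Q\sqrt{x/M}+\sqrt x$, the constraints $x^{1/3}<M\le x^{2/3}$ turn this into $O\!\big((x+x^{1/2}Q^2+x^{5/6}Q)(\log x)^{5/2}\big)$; summing over the $O(\log x)$ ranges produces the $x$, $x^{1/2}Q^2$, $x^{5/6}Q$ terms of \eqref{MVT} with the full exponent $(\log x)^{7/2}$.

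The final task is to reassemble everything with explicit constants. The explicit inputs required are: an explicit P\'olya--Vinogradov inequality; the Montgomery--Vaughan large sieve with constant $1$ in $Q^2+N$; an explicit bound for $\sum_{m\le t}\tau(m)^2$ (and, more economically, for $\sum_{m\le t}(\sum_{d\mid m,\,d\le U}\mu(d))^2$); the explicit inequality $\psi(t)\le A_0t$; and explicit estimates for $\sum_{q\le Q}q^{3/2}$ and for the Perron truncation error. The main obstacle is keeping all of these constants small enough that the four partial constants sum to $c_0=48.83\ldots$, while also checking that the four error terms of \eqref{MVT} dominate the crude bounds \emph{uniformly in} $Q>0$ --- here the adaptively chosen level $Y$, and, for very large $Q$, the term $x^{1/2}Q^2$, do the work. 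The hypothesis $x\ge 4$ is used repeatedly to absorb lower-order terms (such as replacing $\log x+O(1)$ by a constant multiple of $\log x$) into the stated bound.
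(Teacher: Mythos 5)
Your proposal follows essentially the same route as the paper: Vaughan's identity, a trivial Chebyshev bound for the short piece, P\'olya--Vinogradov for the Type I ranges, and dyadic decomposition plus a Perron-type removal of $\max_y$ followed by Cauchy--Schwarz and the multiplicative large sieve for the bilinear pieces (the paper packages the Perron/large-sieve step as a single explicit lemma with constant $c_3$, and it optimizes by choosing $U=V$ adaptively as $x^{1/3}$ or $x^{2/3}/Q$ rather than fixing $U=V=x^{1/3}$ and splitting the Type I sums at $Y\asymp x^{2/3}/Q$, but these are equivalent reorganizations). One correction: $\sum_{m\le x}\tau(m)^2\asymp x(\log x)^3$, not $O(x(\log x)^2)$, so to reach the exponent $7/2$ you must use, as you note parenthetically, the explicit second-moment bound $\sum_{k\le t}\bigl(\sum_{d\mid k,\,d\le V}\mu(d)\bigr)^2\le \tfrac43 t(\log e^3V)^2$ of Granville--Ramar\'e (the paper's Lemma \ref{BT}(g)); with the $\tau^2$ bound the argument would only give $(\log x)^4$.
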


Mean value theorems of this type have many applications (see \cite{V1} and \cite{V2} for some examples). One of the most important consequences of \eqref{MVT} is a straightforward proof of 
the Bombieri-Vinogradov theorem (see \cite[Chapter 28]{D} or \cite{V5}). In addition to Theorem \ref{TMVT}, the proof uses the Siegel-Walfisz theorem, which states that
$$\psi(x,\chi)-\delta(\chi) x \ll_A x \exp\left(-c \sqrt{\log{x}} \right)$$
uniformly for $q\leq (\log{x})^A$. Here $A>0$ is a fixed real number, $c$ is an absolute positive constant, and $\delta(\chi)$ is $1$ or $0$ according to whether the Dirichlet character $\chi$ is principal or non-principal. The implied constant in the Bombieri-Vinogradov theorem is ineffective because the implied constant in the Siegel-Walfisz theorem is ineffective. 
In the proof of the Bombieri-Vinogradov theorem, one uses the Siegel-Walfisz theorem to deal with moduli $q \leq Q$ having small prime divisors and uses Theorem \ref{TMVT} to deal with the sum over the remaining $q \leq Q$.
The following theorem is simply what one gets for the sum over these remaining $q$; we refer to this theorem as an effective variant of the Bombieri-Vinogradov theorem.

\begin{thm} \label{EBVT}
Let $x \geq 4$ and $1 \leq Q_1 \leq Q \leq x^{\frac{1}{2}}$.  Let $\ell(q)$ denote the least prime divisor of $q$.  Then 
\begin{align} \label{EBVTpsi}
\sum_{\substack{q\leq Q \\ \ell(q)>Q_1}} \max_{2 \leq y \leq x} \mathop{\max_{a}}_{(a,q)=1} \abs{ \psi(y;q,a) - \frac{\psi(y)}{\phi(q)} } 
< c_1 \rbr{ 4\frac{x}{Q_1} + 4x^{\frac{1}{2}}Q + 18x^{\frac{2}{3}}Q^{\frac{1}{2}} + 5x^{\frac{5}{6}} \log \rbr{ \frac{eQ}{Q_1} }} \rbr{\log x}^{\frac{9}{2}}.
\end{align}
Here $e=\exp(1)$ and $c_1= (5/4)E_0c_0+1$, where $E_0 = \prod_{p} \rbr{1 + \frac{1}{p(p-1)}} = 1.943596\ldots$.
\end{thm}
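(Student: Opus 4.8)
The plan is to run the classical deduction of the Bombieri--Vinogradov theorem (as in \cite[Chapter 28]{D}) but, instead of invoking the Siegel--Walfisz theorem to kill the moduli with small prime factors, to restrict from the outset to moduli $q$ with $\ell(q) > Q_1$ and feed everything into Theorem \ref{TMVT}. First I would reduce the problem about $\psi(y;q,a)$ to a problem about twisted sums $\psi(y,\chi)$ via the orthogonality of characters: for $(a,q)=1$,
\begin{align*}
\psi(y;q,a) - \frac{\psi(y)}{\phi(q)} = \frac{1}{\phi(q)} \sum_{\chi \neq \chi_0} \bar\chi(a) \psi(y,\chi),
\end{align*}
and then replace each $\chi$ by the primitive character $\chi^\ast$ inducing it, absorbing the difference $\psi(y,\chi) - \psi(y,\chi^\ast)$ into an error term controlled by $\sum_{p \mid q} (\log p)(\log y) \ll (\log x)^2$ (this uses that only prime powers $p^k \leq y$ with $p \mid q$ are removed, and $p \mid q$ forces $p > Q_1 \geq 1$, so the count is $O(\log q / \log Q_1)$ terms; when $Q_1 = 1$ one just uses $\omega(q) \ll \log q$). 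Taking the max over $y$ and over $a$, and using $|\bar\chi(a)| \leq 1$, this bounds the left side of \eqref{EBVTpsi} by
\begin{align*}
\sum_{\substack{q \leq Q \\ \ell(q) > Q_1}} \frac{1}{\phi(q)} \mathop{{\sum}^{\ast}}_{\chi \, (q)} \, \sum_{\substack{r \leq Q \\ q \mid r,\ \ell(r) > Q_1}} \max_{y \leq x} \abs{\psi(y,\chi)} \;+\; (\text{the prime-power error}),
\end{align*}
where I have regrouped the characters by their primitive conductor $q$ and noted that a primitive character mod $q$ can only induce characters mod $r$ for multiples $r$ of $q$ whose least prime factor still exceeds $Q_1$.

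Next I would handle the inner sum over $r$. The number of $r \leq Q$ that are multiples of $q$ with $\ell(r) > Q_1$ is at most $\sum_{m \leq Q/q,\ \ell(qm)>Q_1} 1$; crucially, for such $m$ every prime factor of $m$ exceeds $Q_1$, so by a standard sieve/hyperbola count the number of such $m$ is $O\!\left(\frac{Q}{q}\cdot\frac{1}{\log Q_1}\right) + O(1)$, and more precisely one gets a clean bound of the shape $\frac{Q}{q\,\log Q_1} + \log\!\left(\frac{eQ}{Q_1}\right)$ or similar after summing $1/\phi$ weights; alternatively, and this is probably cleaner for explicit constants, I would interchange the order of summation so that the outer variable is $r$ and write $\sum_{r \leq Q,\ \ell(r)>Q_1}\frac{1}{\phi(r)}\cdot\frac{r}{\phi(r)}$-type weights, then use $\sum_{q \mid r} 1 = $ the divisor count only over prime-conductor contributions. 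Either way, the point is that after this step the left-hand side of \eqref{EBVTpsi} is bounded by a constant times
\begin{align*}
\frac{1}{\log Q_1}\sum_{q \leq Q} \frac{q}{\phi(q)} \mathop{{\sum}^{\ast}}_{\chi \, (q)} \max_{y \leq x} \abs{\psi(y,\chi)} \;+\; (\text{error terms}),
\end{align*}
and the partial-summation / dyadic-decomposition argument that converts $\frac{1}{\log Q_1}\sum_{q\le Q}$ into the $\frac{1}{Q_1}$ prefactor on the $x$-term and the $\log(eQ/Q_1)$ prefactor on the $x^{5/6}$-term.

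Then I would apply Theorem \ref{TMVT} dyadically: split $q \in (Q'/2, Q']$ for $Q' = Q, Q/2, Q/4, \dots$ down to $Q' \approx Q_1$, apply \eqref{MVT} on each dyadic block with $Q$ replaced by $Q'$, and sum the resulting geometric-type series. Each of the four terms $4x$, $2x^{1/2}(Q')^2$, $6x^{2/3}(Q')^{3/2}$, $5x^{5/6}Q'$ behaves differently under this summation weighted by the extra $1/\log Q_1$ (coming from the multiples count) and divided by the $(Q'/2,Q']$ block size when we extract the per-modulus estimate: the constant term, after division by the block length $\sim Q'/2$ and summation over $\sim \log(Q/Q_1)$ blocks, contributes the $x/Q_1$ shape; the $(Q')^2$ term sums geometrically to $\ll x^{1/2}Q^2/Q' \sim x^{1/2}Q$ after the division; the $(Q')^{3/2}$ term gives $x^{2/3}Q^{1/2}$ after division by $Q'$ and geometric summation; and the linear $Q'$ term gives only $x^{5/6}$ per block but there are $\sim\log(eQ/Q_1)$ blocks, producing the $x^{5/6}\log(eQ/Q_1)$ shape. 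Collecting the numerical constants from $c_0$, the Euler-product factor $E_0$ from the $\sum q/\phi(q)$ and multiples-counting weights, and a "$+1$" to absorb the lower-order prime-power error term and the rounding in the dyadic tails yields the constant $c_1 = \tfrac{5}{4}E_0 c_0 + 1$; the extra factor $(\log x)^{9/2}/(\log x)^{7/2} = \log x$ comes from bounding $1/\log Q_1 \leq 1$ trivially in one place while the $\log(Q/Q_1)$ number of dyadic blocks is absorbed as $\leq \log x$ elsewhere, and from the prime-power error carrying its own $(\log x)^2$ against the $(\log x)^{7/2}$ that was already there — I would track this bookkeeping carefully.

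The main obstacle I anticipate is the explicit constant bookkeeping rather than any conceptual difficulty: keeping every inequality sharp enough that the three separate mechanisms (division by dyadic block length, geometric summation of the $x^{1/2}Q^2$ and $x^{2/3}Q^{3/2}$ terms, and linear-in-number-of-blocks growth of the $x^{5/6}Q$ term) each land with the stated coefficients $4, 4, 18, 5$, while simultaneously controlling the conductor-to-modulus passage (the $\psi(y,\chi)-\psi(y,\chi^\ast)$ correction and the count of multiples of $q$ that are $Q_1$-rough) with a uniform constant, and finally verifying that all the slack fits inside the single additive $+1$ in $c_1$. The inequalities $Q_1 \geq 1$ and $Q \leq x^{1/2}$ will be used repeatedly to simplify — e.g. $(Q')^2 x^{1/2} \leq x Q'$ would be wrong, but $Q' \leq Q \leq x^{1/2}$ lets one compare the four terms and confirm none dominates unexpectedly — and the case $Q_1 = 1$ (where "$\ell(q) > 1$" is vacuous, so this is the full Bombieri--Vinogradov-type sum) must be checked to see that $\log(eQ/Q_1) = \log(eQ)$ and $x/Q_1 = x$ still give a valid, if weaker, bound.
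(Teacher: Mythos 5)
Your overall architecture --- orthogonality of characters, passage to primitive characters with a $(\log qx)^2$ error per modulus, regrouping by conductor, then feeding Theorem \ref{TMVT} into a summation device --- matches the paper's, but the two places where you deviate are exactly where the explicit constants are decided, and as planned your argument does not deliver them. First, the extra power of $\log x$ (the jump from $(\log x)^{7/2}$ to $(\log x)^{9/2}$) does not come from ``bounding $1/\log Q_1 \le 1$'' (false anyway for $Q_1 < e$, and the theorem allows $Q_1=1$) nor from the number of dyadic blocks; it comes from the sum over induced moduli: the paper simply bounds $\sum_{k \le Q/q^{\ast}} 1/\phi(kq^{\ast}) \le \phi(q^{\ast})^{-1}\sum_{k \le x^{1/2}} 1/\phi(k) \le \tfrac{5E_0}{4}\phi(q^{\ast})^{-1}\log x$, which is where both the factor $\tfrac54 E_0$ in $c_1$ and the extra $\log x$ originate. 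Your proposed rough-number count $O(Q/(q\log Q_1)) + O(1)$ is not available off the shelf in explicit form, degenerates for small $Q_1$, and in the intended applications $Q_1$ is only a power of $\log x$, so it would not save the $\log x$ in any case.

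Second, the dyadic decomposition is strictly lossier than the Abel summation the paper uses, and the loss is fatal to the stated coefficients. Bounding $1/\phi(q) \le (2/Q')\, q/\phi(q)$ on a block $(Q'/2, Q']$ and applying \eqref{MVT} with $Q$ replaced by $Q'$ gives a block contribution $c_0\bigl(8x/Q' + 4x^{1/2}Q' + 12x^{2/3}(Q')^{1/2} + 10x^{5/6}\bigr)(\log x)^{7/2}$, and summing $Q' = Q2^{-j}$ down to $Q_1$ yields roughly $16x/Q_1$, $8x^{1/2}Q$, $12(1-2^{-1/2})^{-1}x^{2/3}Q^{1/2} \approx 41\, x^{2/3}Q^{1/2}$, and $(10/\log 2)\,x^{5/6}\log(2Q/Q_1)$ --- all well above the claimed $4,4,18,5$. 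The paper instead sets $S(q) = \tfrac{q}{\phi(q)}\sum^{\ast}_{\chi}\max_{y}|\psi(y,\chi)|$ and writes $\sum_{Q_1 < q \le Q}\phi(q)^{-1}\sum^{\ast}_{\chi}\max_y|\psi(y,\chi)|$ exactly as $\tfrac1Q\sum_{q\le Q}S(q) - \tfrac1{Q_1}\sum_{q\le Q_1}S(q) + \int_{Q_1}^{Q}\bigl(\sum_{q\le t}S(q)\bigr)t^{-2}\,dt$, drops the negative term, and integrates \eqref{MVT} termwise; the four integrals $\int 4x\,t^{-2}$, $\int 2x^{1/2}$, $\int 6x^{2/3}t^{-1/2}$, $\int 5x^{5/6}t^{-1}$ combine with the boundary term to give precisely $4x/Q_1$, $4x^{1/2}Q$, $18x^{2/3}Q^{1/2}$, $5x^{5/6}\log(eQ/Q_1)$. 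Replace the dyadic step by this partial summation and drop the $1/\log Q_1$ idea; otherwise you prove a true but weaker inequality, not the one stated.
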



Effective variants of the Bombieri-Vinogradov theorem have been known for a while. However, as far as we know, prior to our work there has not been a version with  explicit numerical constants. Timofeev \cite[Theorem 2]{T} proved an effective variant 
(without
constants explicitly given) when the modulus $q$ varies over integers with $\ell(q)>\exp{((\log{x})^{1/4})}$.
Our effective variant is stronger and produces upper bounds in the form $x/(\log{x})^A$ for $\ell(q)>\exp{((\frac{9}{2}+A)\log\log{x})}$. 
{Another effective variant 
without explicit constants is given by Lenstra and Pomerance \cite[Lemma 11.2] {LP} in their work on Gaussian periods and a polynomial time primality testing algorithm.}

Next we state the version of Theorem \ref{EBVT} that we will employ in the applications.  
As usual, let
\begin{align*}
\pi(x) = \sum_{p \leq x} 1 \qquad \text{ and } \qquad \pi(x;q,a) = \sum_{\substack{p \leq x \\ p \equiv a \bmod{q} }} 1.
\end{align*}

\begin{cor}
\label{CEBVT}
Under the assumptions of Theorem \ref{EBVT},
\begin{align} \label{EBVTpi}
\sum_{\substack{q \leq Q \\ \ell(q) > Q_1}} \max_{2 \leq y \leq x} \mathop{\max_{a}}_{(a,q)=1} \abs{ \pi(y;q,a) - \frac{\pi(y)}{\phi(q)} } 
< c_2 \rbr{ 4\frac{x}{Q_1} + 4x^{\frac{1}{2}}Q + 18x^{\frac{2}{3}}Q^{\frac{1}{2}} +  5x^{\frac{5}{6}} \log \rbr{ \frac{eQ}{Q_1} } } \rbr{\log x}^{\frac{9}{2}},
\end{align}
where 
$c_2 = {2 c_1}/{ \log 2 } + 1$.
\end{cor}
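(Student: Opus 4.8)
The plan is to deduce Corollary~\ref{CEBVT} from Theorem~\ref{EBVT} by partial summation, converting the estimate for the weighted prime count $\psi(y;q,a)$ into one for the unweighted count $\pi(y;q,a)$. The starting point is the standard identity relating the two: for a fixed residue class $a$ with $(a,q)=1$,
\[
\pi(y;q,a) = \sum_{\substack{p \le y \\ p \equiv a \bmod q}} 1 = \sum_{\substack{n \le y \\ n \equiv a \bmod q}} \frac{\Lambda(n)}{\log n} + O\!\left( \text{prime powers} \right),
\]
so that writing $\vartheta$-type sums via Abel summation, $\pi(y;q,a) = \dfrac{\psi(y;q,a)}{\log y} + \displaystyle\int_2^y \frac{\psi(t;q,a)}{t (\log t)^2}\,dt$, up to the contribution of proper prime powers $p^k$ with $k \ge 2$, which over all $q$ and all residue classes contributes a negligible amount (bounded by something like $x^{1/2}(\log x)^2$, dominated by the main terms on the right of \eqref{EBVTpi}). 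Subtracting the same identity applied with $\psi(y)/\phi(q)$ in place of $\psi(y;q,a)$ — note $\pi(y)/\phi(q)$ arises from $\psi(t)/\phi(q)$ under exactly the same Abel summation — gives
\[
\pi(y;q,a) - \frac{\pi(y)}{\phi(q)} = \frac{1}{\log y}\left( \psi(y;q,a) - \frac{\psi(y)}{\phi(q)} \right) + \int_2^y \frac{1}{t(\log t)^2}\left( \psi(t;q,a) - \frac{\psi(t)}{\phi(q)} \right) dt + (\text{prime-power error}).
\]

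Next I would take $\max_{2 \le y \le x}$ and $\max_a$ of both sides. The first term on the right is bounded by $\max_y |\psi(y;q,a) - \psi(y)/\phi(q)|$ (using $\log y \ge \log 2$ is not even needed here since we divide). For the integral term, bound the integrand's $\psi$-difference by $E(q) := \max_{2 \le t \le x}\max_{(a,q)=1}|\psi(t;q,a) - \psi(t)/\phi(q)|$, pull it out, and evaluate $\int_2^x \frac{dt}{t(\log t)^2} = \frac{1}{\log 2} - \frac{1}{\log x} < \frac{1}{\log 2}$. Thus
\[
\max_{2 \le y \le x}\max_{(a,q)=1}\left| \pi(y;q,a) - \frac{\pi(y)}{\phi(q)} \right| \;<\; \left( \frac{1}{\log x} + \frac{1}{\log 2} \right) E(q) + (\text{prime-power contribution}).
\]
Using $\frac{1}{\log x} \le \frac{1}{\log 4} = \frac{1}{2\log 2} < \frac{1}{\log 2}$, the bracketed factor is at most $\frac{2}{\log 2}$.

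Then I would sum over $q \le Q$ with $\ell(q) > Q_1$, apply Theorem~\ref{EBVT} to $\sum_q E(q)$, and collect the prime-power error. This yields the right-hand side of \eqref{EBVTpi} multiplied by $2/\log 2$, plus the prime-power term; absorbing the latter into the ``$+1$'' of the constant (i.e.\ bounding it by $\rbr{ 4\frac{x}{Q_1} + 4x^{1/2}Q + 18x^{2/3}Q^{1/2} + 5x^{5/6}\log(eQ/Q_1)}(\log x)^{9/2}$, which is legitimate since that expression is $\ge 4x/Q_1 \ge 4x^{1/2} \ge 2\sqrt{x}$ already) gives the claimed constant $c_2 = 2c_1/\log 2 + 1$. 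The main obstacle is bookkeeping: one must verify carefully that the prime-power terms (both from $\pi$ versus the $\Lambda$-weighted sum and from the partial-summation remainder), summed over all $q \le Q$ and all admissible residues, are genuinely dominated by a single copy of the displayed polynomial-in-$x$ factor times $(\log x)^{9/2}$, so that they fit inside the ``$+1$''. Since $\sum_{k \ge 2}\sum_{p^k \le x} 1 \ll x^{1/2}$ and there are at most $x^{1/2}/Q_1 + Q \le \,$(something comparable to the main term) relevant pairs $(q,a)$ to worry about in the worst case — in fact the crude bound $\sum_{q \le Q}\sum_{p^k \le x, k\ge2} 1 \le \pi_2(x) \cdot Q$ with $\pi_2(x) \ll x^{1/2}$ and $Q \le x^{1/2}$ gives $\ll x$ — this is routine but needs to be stated cleanly. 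No deeper input than Theorem~\ref{EBVT} and elementary estimates is required.
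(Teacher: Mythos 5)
Your proposal is correct and follows essentially the same route as the paper: partial summation relating $\pi$ to $\psi$ through the weighted count $\sum_{n\le y}\Lambda(n)/\log n$, a total factor $2/\log 2$ coming from the boundary term together with $\int_2^x \frac{dt}{t\log^2 t}<\frac{1}{\log 2}$, an application of Theorem~\ref{EBVT} to the resulting $\psi$-sums, and absorption of the $O(x^{1/2})$-per-modulus prime-power error into the ``$+1$'' of $c_2$. The only slip is cosmetic: the coefficient of the boundary term $\frac{1}{\log y}\bigl(\psi(y;q,a)-\psi(y)/\phi(q)\bigr)$ should be bounded by $\frac{1}{\log 2}$ (the maximizing $y$ may be near $2$), not by $1$ or $\frac{1}{\log x}$, but your final factor $\frac{2}{\log 2}$ is nonetheless valid and matches the paper's constant.
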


Harman \cite[Theorem 3]{H} gives a version of Corollary \ref{CEBVT} for prime moduli $p \leq Q$ without explicit constants. 
In many applications one needs the version given in Corollary \ref{CEBVT}. Indeed, for Theorem \ref{Erdos} below we have to work with composite modulus $q$.

\begin{remark}
The right-hand side of the inequality \eq{MVT} in Theorem \ref{TMVT} has the form
$$c _0 f(x, Q) (\log{x})^\alpha,$$
where $c_0$ and $\alpha$ are positive real numbers. Our goal was to generate an explicit expression for $f(x, Q)$ which is not far from optimal and at the same time can be neatly written. Following the methods of this paper, there are several ways that one can replace $f(x, Q)$ with a more optimal function. 
As an example, the coefficients of $f(x,Q)$ in \eq{MVT} have been rounded up to be integers. The proof of Theorem \ref{TMVT} can be modified to establish this inequality with slightly smaller non-integral coefficients.
As a more significant example, an application of an improved version of the Polya-Vinogradov inequality (see \cite[Theorem 1]{P}) in \eqref{PV} and \eqref{S3prime} will reduce the values of the coefficients of $f(x, Q)$. Our method also permits further reduction of the exponent $\alpha$.
From \cite{DIT} we know that 
\begin{equation}
\label{optimal}
\sum_{k\leq x} \left(\sum_{ \substack{ d \mid k \\ d \leq V }  } \mu(d) \right)^2\ll x,
\end{equation}
where $\mu$ is the M\"{o}bius function. Employing an explicit version of this inequality (whenever one becomes available) instead of \eqref{norm2}, will result in $\alpha={5}/{2}$.  We note that by the main result of \cite{DIT} the upper bound $x$ in \eqref{optimal} is optimal. Thus it will not be possible to improve $\alpha$ beyond $5/2$ by sharpening the inequality \eqref{optimal}. Similar comments are applicable to the inequalities in Theorem \ref{EBVT} and Corollary \ref{CEBVT}.
\end{remark}



Here we describe the structure of this paper. 
Section 2 presents several applications of our variant of the Bombieri-Vinogradov theorem (Corollary \ref{CEBVT}) in problems related to shifted primes.
In Section 3 we collect some known number theoretic inequalities with explicit constants that will be used in our proofs. 
Sections 4 and 5 are devoted to proofs of the results stated in Section 2.
Section 6 is dedicated to a detailed proof of our main mean value theorem (Theorem \ref{TMVT}).
The proofs of Theorem \ref{EBVT} and Corollary \ref{CEBVT} are given in Sections 7 and 8, respectively.

\medskip\par
\noindent{\bf Notation.}
The symbols $p$, $p_1$, $p_2$, $\ell$, $\ell_1$, and $\ell_2$ always denote primes. The symbols $n$ and $q$ always denote integers. The least prime divisor of $q$ is denoted $\ell(q)$. The number of distinct prime divisors of $n$ is denoted $\omega(n)$. The M\"{o}bius function and the von Mangoldt function are defined, respectively, by
$$
\mu(n) = \left\{\begin{array}{ll}
(-1)^{\omega(n)} & \text{if $n$ is square-free,} \\
0 & \text{otherwise,}
\end{array}
\right.
$$
and
$$
\Lambda(n) = \left\{\begin{array}{ll}
\log p & \text{if $n = p^k$ for some prime $p$ and integer $k \geq 1$,} \\
0 & \text{otherwise.}
\end{array}
\right.
$$
The prime counting function and the standard Chebyshev functions are
$$
\pi(x) = \sum_{p \leq x} 1, \qquad \vartheta(x) = \sum_{p \leq x} \log p, \qquad \psi(x) = \sum_{n \leq x} \Lambda(n).
$$
The corresponding functions in an arithmetic progression are 
$$
\pi(x;q,a) = \sum_{\substack{p\leq x \\ {p\equiv a\bmod{q}}}} 1, \qquad \vartheta(x;q,a) = \sum_{\substack{p\leq x \\ {p\equiv a\bmod{q}}}} \log p, \qquad \psi(x;q,a) = \sum_{\substack{n\leq x \\ {n\equiv a\bmod{q}}}} \Lambda(n),
$$
where $a, q$ are integers with $q \geq 1$. The twisted summatory function associated to the von Mangoldt function $\Lambda$ and a Dirichlet character
$\chi$ is defined by 
$$
\psi(x,\chi) = \sum_{n \leq x} \Lambda(n) \chi(n).
$$
The greatest prime divisor of an integer $n \neq 0,\pm 1$ is denoted $P(n)$. The least prime congruent to $a$ modulo $p$ is denoted $L(p,a)$. The multiplicative order of a nonzero integer $b$ modulo $p$ is denoted $e_b(p)$. The constants $c_0$ and $A_0$ are defined in Theorem \ref{TMVT}. The constants $c_1$ and $E_0$ are defined in Theorem \ref{EBVT}. The constant $c_2$ is defined in Corollary \ref{CEBVT}.
\medskip\par

\noindent {\bf Acknowledgements.}
We thank Adam Felix, Andrew Granville, Glyn Harman, Kumar Murty, Carl Pomerance, Olivier Ramar\'{e}, and Igor Shparlinski for their correspondence and their comments on an earlier draft of this paper. We also thank the referee for many helpful comments and suggestions.

\section{Summary of Applications}
\label{sectiontwo}

In this section we describe some applications of Corollary \ref{CEBVT}. Moreover, we calculate explicitly the constants involved. As far as we know, this has not been done before.

The following two theorems are explicit Tur\'{a}n-type inequalities for $\omega(p-1)$,  
the number of prime divisors of $p-1$.  
\begin{thm}
\label{Erdos}
Given $\epsilon>0$, there is an effective constant $C_0(\epsilon)$ such that
$$\frac{1}{\pi(x)}\sum_{p\leq x} (\omega(p-1)-\log\log{x})^2 < (4+\epsilon) (\log\log{x}) (\log\log\log{x})$$ 
for $x\geq C_0(\epsilon)$.
\end{thm}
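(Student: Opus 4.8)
The plan is to prove this explicit Tur\'an--Kubilius-type estimate for $\omega(p-1)$ by expanding the square and handling the resulting first and second moments separately, reducing each to a sum over prime (or prime-pair) moduli that can be controlled via Corollary \ref{CEBVT}. Write
$$
\sum_{p \leq x} (\omega(p-1) - \log\log x)^2 = \sum_{p \leq x} \omega(p-1)^2 - 2 \log\log x \sum_{p \leq x} \omega(p-1) + \pi(x)(\log\log x)^2.
$$
Since $\omega(p-1) = \sum_{\ell \mid p-1} 1$ with $\ell$ prime, we have $\sum_{p \leq x} \omega(p-1) = \sum_{\ell \leq x} \pi(x;\ell,1)$ (here one must be slightly careful: $\ell$ ranges over primes up to $x-1$, but $\ell = p$ is impossible since $\ell \nmid p-1$; also very large $\ell$ near $x$ contribute $O(1)$ terms). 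Truncating the $\ell$-sum at some parameter $y$ to be chosen (a power of $\log x$, so that the effective Bombieri--Vinogradov range applies), I would replace $\pi(x;\ell,1)$ by $\pi(x)/\phi(\ell) = \pi(x)/(\ell - 1)$ plus an error, and use Corollary \ref{CEBVT} with $Q_1 = 2$, $Q = y$ to bound $\sum_{\ell \leq y} |\pi(x;\ell,1) - \pi(x)/\phi(\ell)|$ by an admissibly small quantity (of size $x(\log x)^{-A}$ type for suitable $A$). The tail $\ell > y$ is handled trivially: $\sum_{y < \ell \leq x} \pi(x;\ell,1) \leq \sum_{y < \ell \leq x} x/\ell \ll (x/y)\log(\log x/\log y)$ plus the count of $p \leq x$ with a prime factor exceeding $y$ in $p-1$, which is small. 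The main term $\sum_{\ell \leq y} \pi(x)/(\ell-1) = \pi(x)(\log\log x + O(1))$ by Mertens with explicit constants (available from Section 3).

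For the second moment, expand $\omega(p-1)^2 = \sum_{\ell_1, \ell_2 \mid p-1} 1 = \sum_{\ell_1 \neq \ell_2 \mid p-1} 1 + \omega(p-1)$; the diagonal $\ell_1 = \ell_2$ just reproduces the first moment. For the off-diagonal, $\sum_{p \leq x} \sum_{\ell_1 \neq \ell_2 \mid p-1} 1 = \sum_{\ell_1 \neq \ell_2} \pi(x; \ell_1\ell_2, 1)$, and here is where composite moduli enter, explaining the remark in the excerpt that Theorem \ref{Erdos} requires composite $q$. Again truncate both primes at $y$; the modulus $q = \ell_1 \ell_2 \leq y^2$ has least prime factor $> $ anything only if we are careful, so I'd take $Q_1 = 2$, $Q = y^2$ and apply Corollary \ref{CEBVT} directly (the sum over pairs $\ell_1\ell_2 \leq y^2$ with distinct prime factors is a sub-sum of the sum over all $q \leq y^2$ with $\ell(q) > 1$, and each such $q$ is hit at most once as a product of two distinct primes — actually at most $O(1)$ times, absorb the constant). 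The main term becomes $\pi(x) \sum_{\ell_1 \neq \ell_2 \leq y} \frac{1}{\phi(\ell_1\ell_2)} = \pi(x) \bigl( \sum_{\ell \leq y} \frac{1}{\ell-1}\bigr)^2 - \pi(x)\sum_{\ell \leq y}\frac{1}{(\ell-1)^2} = \pi(x)\bigl((\log\log x)^2 + O(\log\log x)\bigr)$.

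Assembling: the $(\log\log x)^2$ terms cancel among the second moment, the cross term $-2\log\log x \cdot \pi(x)\log\log x$, and $\pi(x)(\log\log x)^2$, leaving a main contribution of order $\pi(x)\log\log x$ from the secondary terms in Mertens' estimates — but crucially the stated bound has the extra factor $\log\log\log x$, which must come from the error in the Mertens-type sums $\sum_{\ell \leq y} 1/(\ell-1) = \log\log y + M + O(1/\log y)$ combined with the choice of $y$. Specifically, to make the Bombieri--Vinogradov errors negligible one needs $y$ not too large (say $y = (\log x)^B$), so $\log\log y = \log\log\log x + \log B$, and replacing $\log\log x$ by $\log\log y$ in the truncated sums introduces precisely a discrepancy of size $(\log\log x)(\log\log\log x)$ in the second moment — this is the source of the $4 + \epsilon$ constant and the triple-log factor. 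The main obstacle, and the part demanding the most care, is this bookkeeping: choosing the truncation parameter $y$ as a function of $x$ and $\epsilon$ so that (a) the Bombieri--Vinogradov error terms from Corollary \ref{CEBVT} applied with $Q = y^2 \leq x^{1/2}$ are $o(\pi(x)\log\log x \cdot \log\log\log x)$, and (b) the gap between $\log\log x$ and $\log\log y$ contributes at most $(4+\epsilon)(\log\log x)(\log\log\log x)$ after all cancellations, fixing the effective constant $C_0(\epsilon)$ explicitly. The tail estimates for $p-1$ having a prime factor larger than $y$, and the passage from $\pi$ to the count with explicit constants, are routine by comparison but still need to be written with explicit inequalities from Section 3.
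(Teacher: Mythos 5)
Your overall skeleton (expand the square, reduce the first and second moments to $\sum_{\ell}\pi(x;\ell,1)$ and $\sum_{\ell_1\neq\ell_2}\pi(x;\ell_1\ell_2,1)$, and note that composite moduli force the use of Corollary \ref{CEBVT} rather than a prime-moduli variant) matches the paper. But there is a genuine gap at the central step: you propose to apply Corollary \ref{CEBVT} with $Q_1=2$. The right-hand side of \eqref{EBVTpi} contains the term $4c_2\,x\,Q_1^{-1}(\log x)^{9/2}$, so with $Q_1=2$ the bound is of size $x(\log x)^{9/2}$ --- worse than the trivial estimate --- and certainly not of size $x(\log x)^{-A}$. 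This is exactly the limitation the paper emphasizes (Remarks 2.12(iii)): the variant is non-trivial only when \emph{every} modulus in the sum has least prime factor exceeding a power of $\log x$. Consequently the moduli $\ell_1\ell_2$ with $\min(\ell_1,\ell_2)\le U=(\log x)^B$ cannot be fed into Corollary \ref{CEBVT} at all; the paper handles them with Brun--Titchmarsh, and it is precisely this block that produces the main term $\frac{4x}{\log x}\cdot\frac{1}{1-T^{-1}}(\log\log x)(\log\log\log x)$ --- the constant $4$ coming from the Brun--Titchmarsh constant $2$ in each of the two prime variables, and the $\log\log\log x$ from $\sum_{\ell\le U}1/(\ell-1)\approx\log\log U$. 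Your diagnosis that the triple log comes from the discrepancy between $\log\log y$ and $\log\log x$ after truncating at $y=(\log x)^B$ is therefore a misidentification of the mechanism.

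The truncation scheme is also off in a way that breaks the tail estimates. The paper does not truncate at a power of $\log x$: it uses three ranges, $\ell\le U=(\log x)^B$ (Brun--Titchmarsh), $U<\ell\le V$ with $V=x^{1/T}/U$ a power of $x$ (here, and only here, Corollary \ref{CEBVT} applies, with $Q_1=U$), and $\ell>V$ (handled by the combinatorial observation that $p-1$ has at most $\lfloor T\rfloor$ prime factors exceeding $V$, contributing $\le T\pi(x)$ per moment). If you instead truncate at $y=(\log x)^B$, the tail of even the first moment is $\sum_{y<\ell\le x}\pi(x;\ell,1)$, which is of order $x\log\log x$ (your bound $(x/y)\log(\log x/\log y)$ miscomputes $\sum_{y<\ell\le x}x/\ell$), i.e.\ larger by a factor of about $\log x$ than the entire quantity you are trying to bound; and the tail of the second moment is worse. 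So the proof as proposed does not close: you need the three-range decomposition with the middle cutoff $V$ a small power of $x$, the restriction $\ell(q)>Q_1=U$ when invoking the effective variant, and Brun--Titchmarsh for the small-prime-factor moduli, which is where the stated constant $4+\epsilon$ actually originates.
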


\begin{thm}
\label{Erdos1}
For each $b > 0$ there is an effective constant $C(b)$ such that 
$$\frac{1}{\pi(x)}\sum_{p\leq x} (\omega(p-1)-\log\log{x})^2 < \rbr{\frac{251}{60} + \frac{b}{\log \log \log x}}(\log \log x)(\log \log \log x)$$
for all $x \geq C(b)$.
If $a = \log \log C(b)$, some of the possible values for $a$ and $b$ are given in the following table:
\medskip\par
\centering{
\begin{tabular}{|c|c|c|c|c|c|c|c|c|c|c|c|}
\hline
$a$ & 18.59&18.08&17.60&17.14&16.70&16.29&15.89&15.51&15.15&14.80&14.47\\
\hline
$b$&60&62&64&66&68&70&72&74&76&78&80\\
\hline
\end{tabular}
}
\end{thm}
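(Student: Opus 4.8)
The plan is to run the classical second‑moment (Tur\'{a}n--Erd\H{o}s) argument for $\omega(p-1)$, but with Corollary \ref{CEBVT} in place of the ineffective Bombieri--Vinogradov theorem and with every numerical constant supplied by the explicit Mertens‑type and Brun--Titchmarsh estimates of Section 3. Fix a large $x$, set $u=\log\log x$, $y=x^{1/u}$, and let $Q_1$ be a fixed power of $\log x$ large enough that the $4x/Q_1$‑term of Corollary \ref{CEBVT} is $o(x/\log x)$ (e.g.\ $Q_1=(\log x)^{7}$). For $p\le x$ write $\omega(p-1)=\omega'(p-1)+\omega''(p-1)+\omega^{+}(p-1)$, the summands counting the prime divisors of $p-1$ in $(1,Q_1]$, $(Q_1,y]$, $(y,x]$ respectively, and put $M'=\sum_{\ell\le Q_1}\frac1{\ell-1}$, $M''=\sum_{Q_1<\ell\le y}\frac1{\ell-1}$, $A(p)=\omega'(p-1)-M'$, $B(p)=\omega''(p-1)-M''$, and $D=M'+M''-\log\log x$. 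Since $M'+M''=\log\log y+O(1)=\log\log x-\log u+O(1)$, we have $D=-\log u+O(1)<0$ for large $x$, and
\[
\omega(p-1)-\log\log x=A(p)+B(p)+\omega^{+}(p-1)+D .
\]
Squaring and summing over $p\le x$ yields four square terms and six cross terms, which I would estimate separately.

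The dominant contribution is $\sum_{p\le x}\omega^{+}(p-1)^{2}$. Since the primes $\ell>y=x^{1/u}$ dividing $p-1$ have product at most $p-1\le x$, we get $\omega^{+}(p-1)\le u$, so $\sum_p\omega^{+}(p-1)^{2}\le u\sum_{y<\ell\le x}\pi(x;\ell,1)$; split this sum at $\sqrt{x}$. For $y<\ell\le\sqrt{x}$, Brun--Titchmarsh gives $\pi(x;\ell,1)\le\frac{2x}{\phi(\ell)\log(x/\ell)}\le\frac{4x}{(\ell-1)\log x}$, and the explicit Mertens estimate bounds $\sum_{y<\ell\le\sqrt{x}}\frac1{\ell-1}$ by $\log(u/2)+o(1)$; for $\ell>\sqrt{x}$, $p-1$ has at most one such prime factor, so that part of the sum is $\le\pi(x)$. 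Hence $\sum_p\omega^{+}(p-1)^{2}\le 4\pi(x)(\log\log x)(\log\log\log x)(1+o(1))$, which produces the leading constant.

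The term $\sum_p B(p)^{2}$ is where Corollary \ref{CEBVT} is used: expanding $B(p)^{2}$ introduces the moduli $\ell$ and $\ell_1\ell_2$ with $Q_1<\ell,\ell_1,\ell_2\le y$, all of which satisfy $\ell_1\ell_2\le y^{2}=x^{2/u}\le x^{1/2}$ (using $u\ge 4$, valid in the stated range since $\log\log x\ge 14$) and have least prime factor $>Q_1$, so Corollary \ref{CEBVT} applies and, with explicit Mertens estimates for $\sum\frac1{\ell-1}$ and $\sum\frac1{(\ell-1)^{2}}$, gives $\sum_p B(p)^{2}=\pi(x)(M''-V'')+o(\pi(x))=O(\pi(x)\log\log x)$, where $V''=\sum_{Q_1<\ell\le y}\frac1{(\ell-1)^{2}}=O(1)$. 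For $\sum_p A(p)^{2}$ the products $\ell_1\ell_2\le Q_1^{2}$ may have a tiny prime factor, so Corollary \ref{CEBVT} is unavailable and I would use Brun--Titchmarsh throughout, getting $\sum_p A(p)^{2}=O\bigl(\pi(x)(\log\log\log x)^{2}\bigr)$. The rest is lower order: $D^{2}\pi(x)$ and $2D\sum_p A(p)$ are $O(\pi(x)(\log\log\log x)^{2})$, $2D\sum_p B(p)$ is $O(\pi(x)(\log\log\log x)(\log\log x)^{1/2})$, $2D\sum_p\omega^{+}(p-1)\le 0$ since $D<0$, and each cross term $\sum_p A(p)B(p)$, $\sum_p A(p)\omega^{+}(p-1)$, $\sum_p B(p)\omega^{+}(p-1)$ is controlled by Cauchy--Schwarz using the bounds just obtained — the slowest‑decaying being $2\sum_p B(p)\omega^{+}(p-1)\le 4\pi(x)(\log\log x)(\log\log\log x)^{1/2}=o\bigl(\pi(x)(\log\log x)(\log\log\log x)\bigr)$, the gain coming from $A(p)$ and $B(p)$ being centered. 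Summing everything proves Theorem \ref{Erdos}.

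For the explicit Theorem \ref{Erdos1} I would repeat the above retaining all constants. Each non‑leading term is of one of the shapes $c\,\pi(x)\log\log x$, $c\,\pi(x)(\log\log\log x)^{2}$, $c\,\pi(x)(\log\log x)(\log\log\log x)^{1/2}$, or $\ll x/\log x$, with $c$ explicit and built from the Mertens constant, the Brun--Titchmarsh constant $2$, and $c_2,E_0,A_0$; dividing each by $\pi(x)(\log\log x)(\log\log\log x)$ and using $\log\log x\ge e^{a}$, the sum of these ratios is at most $\tfrac{11}{60}+\tfrac{b}{\log\log\log x}$ once $x\ge C(b):=\exp(e^{a})$, and optimizing the trade‑off gives the tabulated pairs $(a,b)$. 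I expect the real work to be precisely this bookkeeping: handling $\sum_p A(p)^{2}$ and the $A(p)$‑cross terms, where Corollary \ref{CEBVT} cannot be invoked and one pays the Brun--Titchmarsh factor; checking that with $Q_1$ a fixed power of $\log x$ the error in Corollary \ref{CEBVT} is genuinely negligible; and — to keep the leading constant at $251/60$ rather than something larger — choosing the split point $\sqrt{x}$ in the main‑term step and the exponent of $\log x$ in $Q_1$ with care.
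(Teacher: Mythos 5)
Your argument is essentially sound and would establish the stated inequality (indeed, at each tabulated $a$ your bookkeeping appears to yield an admissible $b$ well below the tabulated one, which suffices a fortiori), but it is organized quite differently from the paper's proof. The paper expands $\sum_{p\le x}(\omega(p-1)-\log\log x)^2$ directly into $\sum_{\ell_1\ne\ell_2}\pi(x;\ell_1\ell_2,1)+\sum_{\ell}\pi(x;\ell,1)$ and splits the pairs by size with cut points $U=(\log x)^B$ and $V=x^{1/T}/U$, where $T=(16+\epsilon)/\epsilon$ is a \emph{constant} ($T=251/11$ for $\epsilon=11/15$). Because $V$ is a fixed power of $x$, each $p-1$ has at most $T$ prime factors exceeding $V$, so the ``large prime'' part is only $O_T(\pi(x)\log\log x)$; the main term comes instead from the pairs with $\ell_2\le U$, $\ell_1\le V$, via Brun--Titchmarsh applied to the composite modulus $\ell_1\ell_2$, the constant $\tfrac{251}{60}=\tfrac{4}{1-1/T}$ arising from the loss $\log(x/\ell_1\ell_2)\ge(1-1/T)\log x$. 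You instead cut at $y=x^{1/\log\log x}$, which grows more slowly than any power of $x$, so your large-prime count is bounded only by $u=\log\log x$ rather than by a constant, and $\sum_p\omega^{+}(p-1)^2\le u\sum_{y<\ell\le x}\pi(x;\ell,1)$ becomes the main term, with the $4$ now coming from Brun--Titchmarsh on single prime moduli up to $\sqrt x$; your centering of the two lower pieces and the Cauchy--Schwarz treatment of cross terms has no counterpart in the paper, which completes the square globally against $\log\log x$ and absorbs the leftover linear terms directly. Each route has its price: the paper pays the factor $1/(1-1/T)$ in the leading constant plus a linear term $(1+2T)\pi(x)\log\log x\approx 46.6\,\pi(x)\log\log x$, which is what pushes $b$ up to about $60$; you pay the Cauchy--Schwarz term $2\sum_pB(p)\omega^{+}(p-1)\le 4\pi(x)(\log\log x)(\log\log\log x)^{1/2}$, which is $o$ of the main term but contributes $4(\log\log\log x)^{1/2}$ to the normalized excess and hence forces $b\gtrsim\max_{t>0}\bigl(4\sqrt t-\tfrac{11}{60}t\bigr)\approx 22$ --- comfortably under $60$ at the tabulated $a$, but worth flagging since neither your proof nor the paper's actually reaches arbitrarily small $b$. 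Two details to fix in the write-up: the Mertens difference $\sum_{y<\ell\le\sqrt x}\tfrac1{\ell-1}$ equals $\log(u/2)+\sum_p\tfrac1{p(p-1)}+o(1)$, not $\log(u/2)+o(1)$ (harmless, as it only perturbs the linear-in-$u$ term), and the double sum arising in $\sum_pB(p)^2$ counts each modulus $\ell_1\ell_2$ twice, so a factor $2$ must accompany the application of Corollary \ref{CEBVT} there.
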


As a consequence of Theorem \ref{Erdos1}, we get an explicit version of a theorem of Erd\"{o}s \cite{E} on the normal order of  $\omega(p-1)$.

\begin{cor}
\label{Erdos2}
Let $\eta>0$. Then for $x \geq \exp(\exp(a))$ we have 
$$
\frac{1}{\pi(x)} \#\{p\leq x;~\omega(p-1) >(1+\eta) \log\log{x} \} \leq \frac{1}{\eta^2}\left(\frac{251\log\log\log{x}}{60 \log\log{x}}+\frac{b}{\log\log{x}} \right),
$$
where some of the possible values for $a$ and $b$ are given in the table in Theorem \ref{Erdos1}. 
\end{cor}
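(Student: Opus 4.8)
The plan is to deduce Corollary \ref{Erdos2} from Theorem \ref{Erdos1} by the classical second-moment argument of Tur\'{a}n (i.e., Chebyshev's inequality applied to the random variable $\omega(p-1)$ as $p$ ranges over primes up to $x$). The point is that a prime counted on the left-hand side forces $\omega(p-1)$ to deviate from $\log\log x$ by at least $\eta\log\log x$, so such primes contribute at least $\eta^2(\log\log x)^2$ each to the sum estimated in Theorem \ref{Erdos1}.

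Concretely, first I would fix $b$ and set $C(b) = \exp(\exp(a))$ as in Theorem \ref{Erdos1}, and take $x \geq C(b)$; since the admissible values of $a$ in the table are all well above $e$, both $\log\log x$ and $\log\log\log x$ are positive on this range. If $p \leq x$ is a prime with $\omega(p-1) > (1+\eta)\log\log x$, then $\omega(p-1) - \log\log x > \eta\log\log x > 0$, and hence $(\omega(p-1) - \log\log x)^2 > \eta^2(\log\log x)^2$. Summing this over precisely those primes, and then enlarging the index set to all $p \leq x$ (the added terms are nonnegative squares), gives
\[
\eta^2(\log\log x)^2 \cdot \#\{p \leq x;~\omega(p-1) > (1+\eta)\log\log x\} \leq \sum_{p \leq x} (\omega(p-1) - \log\log x)^2.
\]
Next I would apply Theorem \ref{Erdos1} to bound the right-hand side by $\pi(x)\bigl(\tfrac{251}{60} + \tfrac{b}{\log\log\log x}\bigr)(\log\log x)(\log\log\log x)$, then divide through by $\eta^2\pi(x)(\log\log x)^2$. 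Using $(\log\log x)(\log\log\log x)/(\log\log x)^2 = (\log\log\log x)/(\log\log x)$ and distributing the $1/\eta^2$ and the factor $(\log\log\log x)/(\log\log x)$ across the two terms in the parenthesis produces exactly
\[
\frac{1}{\pi(x)}\#\{p \leq x;~\omega(p-1) > (1+\eta)\log\log x\} \leq \frac{1}{\eta^2}\left(\frac{251\log\log\log x}{60\log\log x} + \frac{b}{\log\log x}\right),
\]
which is the assertion.

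There is no real obstacle here: the entire quantitative content sits in Theorem \ref{Erdos1}, and the remaining work is the elementary deviation inequality together with bookkeeping of the iterated logarithms. The only point meriting a word of care is verifying that the elementary manipulations with $\log\log x$ and $\log\log\log x$ are legitimate on the stated range $x \geq \exp(\exp(a))$, which is immediate from the size of $a$ in the table of Theorem \ref{Erdos1}.
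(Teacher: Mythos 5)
Your proposal is correct and is exactly the intended deduction: the paper states Corollary \ref{Erdos2} as an immediate consequence of Theorem \ref{Erdos1} via this standard Chebyshev/Tur\'{a}n second-moment argument, and your bookkeeping with the iterated logarithms reproduces the stated bound precisely.
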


The proofs of the above theorems follow the general strategy outlined in \cite[Section 3.3]{CM}. However, modifications are required since Corollary \ref{CEBVT} (our effective variant of the Bombieri-Vinogradov theorem) produces non-trivial results only for moduli bigger than a power of $\log{x}$.
A key idea is the application of the Brun-Titchmarsh inequality (Lemma \ref{BT}(f)) when Corollary \ref{CEBVT} is not useful. 
Generating the table in Theorem \ref{Erdos1} requires some care in choosing values for the parameters $U$ and $V$ that arise in the proof of the theorem. See Section \ref{Section3} for details.

Our second application is an effective version of a theorem of Goldfeld \cite[Theorem 1]{G}. Let $p_1$ and $p_2$ denote primes.
\begin{thm}
\label{Goldfeld 1}
There is an effective positive constant $C_0(\epsilon)$ depending only on $\epsilon>0$ such that 
$$
\frac{x}{2}-\left(26+\epsilon\right)\frac{x\log\log{x}}{\log{x}} < \sum_{p_1\leq x} \sum_{\substack{{x^{\frac{1}{2}}<p_2\leq x}\\{p_2\mid p_1-1}}} \log{p_2} < \frac{x}{2} + \left(13+{\epsilon}\right)\frac{x\log\log{x}}{\log{x}}
$$
for $x\geq C_0(\epsilon)$.
\end{thm}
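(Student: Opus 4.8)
The plan is to write the sum in question as
\[
\mathcal{S} := \sum_{p_1 \le x}\ \sum_{\substack{x^{1/2} < p_2 \le x \\ p_2 \mid p_1-1}}\log p_2
\]
and reduce it, via an elementary identity, to sums of $\pi(x;q,1)$ over \emph{small} moduli $q$, where Corollary \ref{CEBVT} and the Brun--Titchmarsh inequality apply. The key observation is that if $p_1 \le x$ then $p_1-1 < x$ has at most one prime divisor exceeding $x^{1/2}$, and any such divisor divides $p_1-1$ to the first power only; hence, writing $s(n)$ for the largest divisor of $n$ composed solely of primes $\le x^{1/2}$, one has $\sum_{x^{1/2}<p_2\le x,\ p_2\mid p_1-1}\log p_2 = \log(p_1-1) - \log s(p_1-1)$. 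Summing over $p_1\le x$ and counting, for each fixed prime $p_2\le x^{1/2}$, how often a power $p_2^{\,j}$ divides $p_1-1$, gives
\[
\mathcal{S} = \sum_{p_1\le x}\log(p_1-1)\ -\ \sum_{p_2\le x^{1/2}}\log p_2\sum_{j\ge1}\pi\bigl(x;p_2^{\,j},1\bigr).
\]

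For the first sum, $\sum_{p_1\le x}\log(p_1-1) = \vartheta(x) + \sum_{p\le x}\log(1-1/p)$, which by Mertens' theorems and an effective prime number theorem (collected in Section 3) equals $x$ up to an error that is $o\bigl(x\log\log x/\log x\bigr)$. For the second sum, the main term arises on replacing $\pi(x;p_2^{\,j},1)$ by $\pi(x)/\phi(p_2^{\,j})$; since $\sum_{j\ge1}\phi(p_2^{\,j})^{-1} = p_2/(p_2-1)^2$ and, by an explicit Mertens estimate, $\sum_{p\le x^{1/2}}\tfrac{p\log p}{(p-1)^2} = \tfrac12\log x + O(1)$, this main term equals $\pi(x)\bigl(\tfrac12\log x + O(1)\bigr) = \tfrac{x}{2} + O(x/\log x)$. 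Thus $\mathcal{S} = \tfrac{x}{2}$ up to the quantity
\[
\mathcal{E} := \sum_{p_2\le x^{1/2}}\log p_2\sum_{j\ge1}\Bigl|\,\pi\bigl(x;p_2^{\,j},1\bigr) - \pi(x)/\phi(p_2^{\,j})\,\Bigr|,
\]
and the crux is to bound $\mathcal{E}$.

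Here I would take $Q_1 = (\log x)^{13/2}$ and $Q = x^{1/2}(\log x)^{-13/2}$, so that $1\le Q_1\le Q\le x^{1/2}$ for large $x$, and split the moduli $q = p_2^{\,j}$ into four groups. The terms with $j\ge2$ contribute only $O(x/\log x)$ (Brun--Titchmarsh when $p_2^{\,j}\le x^{1/2}$, a trivial count otherwise). For $q$ with $Q_1 < p_2$ and $q\le Q$, Corollary \ref{CEBVT} applies since $\ell(q) = p_2 > Q_1$; with these choices each of the four terms on the right of \eqref{EBVTpi}, after multiplication by the crude bound $\log q\le\log x$ and by $(\log x)^{9/2}$, is $O(x/\log x)$ and hence negligible against $x\log\log x/\log x$ --- and it is precisely this requirement, together with the exponent $9/2$ in \eqref{EBVTpi}, that fixes the exponent $13/2$. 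Finally, the ranges $p_2\le Q_1$ and $Q < p_2\le x^{1/2}$ (with $j=1$) are handled directly by Brun--Titchmarsh (Lemma \ref{BT}(f)): since $\log(x/p_2) = (1-o(1))\log x$ in the first range and $\ge\tfrac12\log x$ in the second, the individual terms are $\le(1+o(1))\pi(x)/(p_2-1)$ and $\le(3+o(1))\pi(x)/(p_2-1)$ respectively, and summing by Mertens over these two intervals --- each of logarithmic length $\tfrac{13}{2}\log\log x$ --- yields $\mathcal{E}\le\bigl(\tfrac{13}{2}+3\cdot\tfrac{13}{2}+o(1)\bigr)\tfrac{x\log\log x}{\log x} = (26+o(1))\tfrac{x\log\log x}{\log x}$.

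This already gives the lower bound $\mathcal{S} > \tfrac{x}{2} - (26+\epsilon)\tfrac{x\log\log x}{\log x}$ for $x\ge C_0(\epsilon)$. For the upper bound I would \emph{not} use the full estimate for $\mathcal{E}$; instead, in the smooth-part sum I keep only the (nonnegative) contribution of the moduli $Q_1 < p_2\le Q$ with $j=1$, bound $\sum_{Q_1<p_2\le Q}\log p_2\cdot\pi(x;p_2,1)$ from below by $\pi(x)\sum_{Q_1<p_2\le Q}\tfrac{\log p_2}{p_2-1}$ minus the (negligible) Bombieri--Vinogradov error, and use that $(Q_1,Q]$ has logarithmic length $\tfrac12\log x - 13\log\log x$; together with $\pi(x)\log x\ge x$ and $\pi(x) = (1+o(1))x/\log x$ this produces $\mathcal{S} < \tfrac{x}{2} + (13+\epsilon)\tfrac{x\log\log x}{\log x}$. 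The asymmetry between $26$ and $13$ is exactly this: Brun--Titchmarsh bounds $\pi(x;q,a)$ only from above, so it can bound the smooth-part sum only from below --- hence $\mathcal{S}$ only from above --- while bounding $\mathcal{S}$ from below is forced to absorb the full Brun--Titchmarsh loss (including the wasteful factor near $p_2 = x^{1/2}$) over both uncovered ranges. The main obstacle is thus not conceptual but the careful propagation of explicit constants through the effective Mertens estimates, the prime number theorem, and Brun--Titchmarsh, combined with the tuning of $Q_1$ and $Q$ so that the Bombieri--Vinogradov error genuinely vanishes while the Brun--Titchmarsh ranges stay as short as the constraints permit.
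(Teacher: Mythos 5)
Your proposal is correct and follows essentially the same route as the paper: the same decomposition into a complete sum $\sum_{p_1\le x}\log(p_1-1)$ minus a sum over small prime(-power) moduli, the same three-way split of those moduli at $(\log x)^{13/2}$ and $x^{1/2}(\log x)^{-13/2}$, Brun--Titchmarsh on the two outer ranges and Corollary \ref{CEBVT} on the middle one, and the same source for the constants $26=\tfrac{13}{2}+\tfrac{39}{2}$ and $13$. The only cosmetic differences are that you extract the main term $\pi(x)/\phi(q)$ over the whole range and bound deviations, rather than extracting it only over the middle range, and that you replace the paper's partial-summation argument near $p_2=x^{1/2}$ (Lemma \ref{theta}) by the pointwise bound $\log(x/p_2)\ge\tfrac12\log x$, which yields the same leading constant.
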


By an argument similar to the proof of the lower bound in Theorem \ref{Goldfeld 1}, we can establish the following.  

\begin{thm}
\label{Proposition 1.8}
Suppose $\epsilon>0$ and $\frac{1}{2} \leq \theta <1-\frac{1}{2} \exp {\left(-\frac{1}{4} \right)}=0.6105\ldots$. Let 
$\delta(\theta)= \frac{1}{2} +2 \log \left( 2-2\theta \right)$. Then there is an effectively computable constant $C_0(\theta, \epsilon)$ such that
\begin{align} 
\sum_{p_1\leq x} \sum_{\substack{{x^{\theta}<p_2\leq x}\\{p_2\mid p_1-1}}} \log{p_2}
>  \delta(\theta) x - (26+\epsilon)\frac{x \log \log x}{\log{x}}
\end{align}
for $x \geq C_0(\theta, \epsilon)$.
\end{thm}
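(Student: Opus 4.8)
The plan is to mimic the proof of the lower bound in Theorem~\ref{Goldfeld 1}, tracking how the range of $p_2$ enters. First I would write
$$
S_\theta(x) := \sum_{p_1\leq x} \sum_{\substack{x^{\theta}<p_2\leq x \\ p_2\mid p_1-1}} \log p_2 = \sum_{x^{\theta}<p_2\leq x} (\log p_2)\bigl(\pi(x;p_2,1)-1\bigr) + O(\text{negligible}),
$$
the $-1$ accounting for $p_1 = p_2$ (only relevant when $p_2 \le x$, harmless), and then split $\pi(x;p_2,1)$ as the main term $\pi(x)/(p_2-1)$ plus the error $\pi(x;p_2,1)-\pi(x)/(p_2-1)$. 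The main term contributes
$$
\pi(x)\sum_{x^{\theta}<p_2\leq x} \frac{\log p_2}{p_2-1},
$$
which by Mertens-type estimates with explicit constants (the inequalities collected in Section~3, cf.\ Lemma~\ref{BT}) is $\pi(x)\bigl(\log x^{1-\theta} + O(1/\log x)\bigr) = (1-\theta)x + O(x\log\log x/\log x)$ after invoking $\pi(x)\sim x/\log x$ with an explicit error. This already gives a clean main term; the coefficient $\delta(\theta) = \tfrac12 + 2\log(2-2\theta)$ must then come from combining this with the contribution of the error terms, rather than from the main term alone, which suggests the decomposition is slightly different from what I wrote.

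Reconsidering: the factor $2\log(2-2\theta)$ and the constant $\tfrac12$ strongly indicate that one does \emph{not} extract $\pi(x)/(p_2-1)$ directly but instead mirrors Goldfeld's argument, where the range $x^{1/2}<p_2\le x$ produced the constant $\tfrac12$ via $\sum_{x^{1/2}<p_2\le x}\frac{\log p_2}{p_2} \approx \tfrac12\log x$ — i.e.\ the main term of $S_{1/2}(x)$ is $\tfrac{x}{2}$. For general $\theta$, the analogous sum over $x^{1/2}<p_2\le x^{1-\theta}$ would contribute, so I expect the real decomposition to be: write $p_1 - 1 = p_2 m$ with $m < x/x^{\theta} = x^{1-\theta}$, sum over $m$ first, and detect $p_1$ via $\pi$ in the progression $1 \pmod m$ — this is exactly where Corollary~\ref{CEBVT} enters, since $m$ ranges over integers (composite allowed) up to $x^{1-\theta} \le x^{1/2}$ when $\theta \ge 1/2$. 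So the structure is: (i) a combinatorial rearrangement turning the double sum over $(p_1,p_2)$ into a sum over moduli $m \le x^{1-\theta}$ of $\sum_{p_2 \mid p_1 - 1,\ p_1 - 1 = p_2 m} \log p_2$, then (ii) replace the inner prime-counting quantity by its expected main term, (iii) bound the resulting error by Corollary~\ref{CEBVT} together with a Brun--Titchmarsh bound (Lemma~\ref{BT}(f)) for the small moduli $m \le Q_1$ that the corollary does not reach, and (iv) evaluate the main term by explicit Mertens estimates, which is where $\delta(\theta)$ crystallizes and where the constraint $\theta < 1 - \tfrac12 e^{-1/4} = 0.6105\ldots$ arises — precisely the point at which $\delta(\theta) = \tfrac12 + 2\log(2-2\theta)$ becomes positive.

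Concretely, the steps in order: (1) restrict to $p_2 > x^{\theta}$ so that in $p_1 - 1 = p_2 m$ the cofactor satisfies $m \le x^{1-\theta}$; isolate and discard the contribution of $m=1$ (twin-prime-like, which is where the $\log p_2/p_2$-type sum giving the $\tfrac12$-constant actually lives — that piece is $\sum_{x^{\theta}<p_2\le x, \ p_2+1 \text{ prime}} \log p_2$ and is handled separately, possibly by an upper Brun--Titchmarsh bound, contributing to the lower bound as $\ge 0$ or as a small negative error). (2) For $m \ge 2$, write $\sum_{m\le x^{1-\theta}} \sum_{\substack{p_1 \le x,\ p_1 \equiv 1 (m) \\ (p_1-1)/m = p_2 \text{ prime}}} \log\frac{p_1-1}{m}$; relax the primality of $p_2$ to get, after partial summation over $p_1$, a main term $\sum_{m \le x^{1-\theta}} \frac{1}{\phi(m)} \sum_{x^{\theta} < p_2 \le x/m} \log p_2 \cdot (\text{density factor})$, and an error controlled by $\sum_{m} \max_y |\pi(y;m,1) - \pi(y)/\phi(m)|$. (3) Apply Corollary~\ref{CEBVT} with $Q = x^{1-\theta} \le x^{1/2}$ and a choice of $Q_1$ like a suitable power of $\log x$ so the right side is $O(x/(\log x)^{A})$ for large $A$; for moduli $m \le Q_1$ use Lemma~\ref{BT}(f) (Brun--Titchmarsh), contributing $O(x\log\log x/\log x)$ — this is the source of the $(26+\epsilon)x\log\log x/\log x$ error term, identical to Theorem~\ref{Goldfeld 1}'s. (4) Evaluate the main term with explicit forms of Mertens' theorems from Section~3 to obtain $\delta(\theta) x$ with $\delta(\theta) = \tfrac12 + 2\log(2-2\theta)$; the $\tfrac12$ descends from the $m=1$/near-twin piece and the $2\log(2-2\theta)$ from $\sum_{2 \le m \le x^{1-\theta}} \frac{1}{\phi(m)} \cdot (\cdots)$ evaluated against $\log(x/x^{\theta}) = (1-\theta)\log x$, the factor $2$ coming from $\sum_{m\le t} 1/\phi(m) \sim C\log t$ normalized so that at $\theta = 1/2$ one recovers Goldfeld. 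The hard part will be step~(4): getting the constants in the main-term Mertens estimates to line up exactly with the stated $\delta(\theta)$, and in particular verifying that the negative contributions from discarding primality of $p_2$ and from the near-twin piece are absorbable into the $(26+\epsilon)x\log\log x/\log x$ term without degrading $\delta(\theta)$; the uniformity in $\theta$ up to (but not including) $0.6105\ldots$ must be tracked carefully since $\delta(\theta) \to 0$ there and the error term does not, so $C_0(\theta,\epsilon)$ must blow up as $\theta \to 0.6105\ldots$.
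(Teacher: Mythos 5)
Your proposal goes off the rails at the point where you ``reconsider'' and switch to summing over the cofactor $m=(p_1-1)/p_2\le x^{1-\theta}$. That decomposition forces you to count primes $p_1\equiv 1\pmod m$ subject to the additional condition that $(p_1-1)/m$ be prime --- a binary, twin-prime-type condition that neither Corollary \ref{CEBVT} nor Brun--Titchmarsh can detect; and ``relaxing the primality of $p_2$'' only produces an \emph{upper} bound for a sum of non-negative terms, which is useless here since the theorem asserts a \emph{lower} bound. Relatedly, your attribution of the constant $\tfrac12$ to ``the $m=1$/near-twin piece'' cannot work: that piece is $\sum_{p_2\le x,\ p_2+1\ \mathrm{prime}}\log p_2$, a twin-prime count of conjectural order $x/\log^2 x$ for which no nontrivial lower bound is known, so it cannot supply a main term of size $x/2$.

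The actual argument (which is the one-line reduction the paper gives) keeps the primes $p_2$ themselves as moduli and uses the complement: with $U=(\log x)^{6.5}$ and $V=x^{1/2}/(\log x)^{6.5}$,
\begin{align*}
\sum_{x^\theta<p\le x}(\log p)\,\pi(x;p,1)
=\sum_{p_1\le x}\sum_{p_2\mid p_1-1}\log p_2
-\sum_{p\le U}-\sum_{U<p\le V}-\sum_{V<p\le x^\theta}.
\end{align*}
The full sum is $\ge x-5x/\log x$ by the elementary identity $\sum_{p_1\le x}\sum_{p_2\mid p_1-1}\log p_2=\vartheta(x)-E_1-E_2$, with no Bombieri--Vinogradov input at all; the range $p\le U$ costs at most $13\,x\log\log x/\log x$ by Brun--Titchmarsh and Mertens; the range $U<p\le V$ is the one evaluated by Corollary \ref{CEBVT} and equals $x/2$ up to $(13-\epsilon/2)\,x\log\log x/\log x$, the $x/2$ arising from $\pi(x)\sum_{U<p\le V}\log p/(p-1)\approx\tfrac{x}{\log x}\cdot\tfrac12\log x$; and the range $V<p\le x^\theta$ --- the only place $\theta$ enters --- is bounded via Lemma \ref{theta} (Brun--Titchmarsh plus partial summation) by $2x\log\bigl(\tfrac{\log(x/V)}{(1-\theta)\log x}\bigr)+o(x)\approx-2\log(2-2\theta)\,x$. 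Adding up gives $x-\tfrac{x}{2}+2\log(2-2\theta)\,x=\delta(\theta)\,x$ with total error $(26+\epsilon)\,x\log\log x/\log x$. Your very first displayed identity was already the correct starting point (the $-1$ correction is vacuous, since $p_2\mid p_2-1$ is impossible); the missing idea is to treat the out-of-range primes $x^\theta<p_2\le x$ not directly but as ``everything minus the tractable ranges,'' with the full sum computed exactly through $\log(p_1-1)$.
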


For the proofs of the previous two theorems, a key idea is that we apply the Brun-Titchmarsh inequality (Lemma \ref{BT}(f)) for the small moduli where our effective variant of the Bombieri-Vinogradov theorem (Corollary \ref{CEBVT}) gives only trivial estimates. 
Harman used this idea in \cite{H}.

Theorem \ref{Proposition 1.8} has applications to problems related to the least prime in an arithmetic progression,
the greatest prime divisor of a shifted prime, and the order of an integer modulo a prime. Let $L(p,a)$ be the least prime congruent to $a$ modulo $p$, let $P(n)$ be the greatest prime divisor of the integer $n$, and let $e_b(p)$ be the multiplicative order of the positive integer $b$ modulo $p$. 
\begin{cor}

\label{Corollary 1.9}

Let $n\geq 1$ be an integer, $\frac{1}{2} \leq \theta <1-\frac{1}{2} \exp {\left(-\frac{1}{4} \right)}=0.6105\ldots$, and $\delta(\theta) = \frac{1}{2} + 2\log(2-2\theta)$. Let $C_0(\theta, \epsilon)$ be the constant given in Theorem \ref{Proposition 1.8}.

\noindent \textbf{(i)}  Let $C_1(\theta, \epsilon)$ be the smallest value of $x$ such that
$$\frac{\log \log x}{\log x}< \frac{\delta(\theta)}{26+\epsilon}.$$
Then for every $x \geq \max \{C_0(\theta, \epsilon), C_1(\theta, \epsilon)\}$ there exists a prime $p$ with $x^\theta < p \leq x$ such that
$$L(p, 1)< p^{1/\theta}.$$

\noindent \textbf{(ii)} Let $C_2(\theta, \epsilon, n)$ be the smallest value of $x$ for which 
\begin{align*}
\frac{\log \log x}{\log x} < \frac{\delta(\theta)}{n(26+\epsilon)}.
\end{align*}
Then
\begin{align*}
\mathop{\sum_{p_1 \leq x}}_{P(p_1-1) > x^{\theta}} 1 > \delta(\theta)\left( 1-\frac{1}{n} \right)\frac{x}{\log x}
\end{align*}
whenever $x \geq \max\{C_0(\theta, \epsilon), C_2(\theta, \epsilon, n)\}.$

\noindent \textbf{(iii)} Let $C_3(\theta, \epsilon)$ be the smallest value of $x$ such that \eq{f1f2} holds, and let $C_4(\theta, \epsilon, n)$ be the smallest value of $x$ for which 
\begin{align*}
\frac{\log \log x}{\log x} < \frac{\delta(\theta)}{n(26+2/(1-\theta)+2\epsilon)}.
\end{align*}
Then 
\begin{align*}
\mathop{\sum_{p_1 \leq x}}_{e_2(p_1) > x^{\theta}} 1 > \delta(\theta)\left( 1-\frac{1}{n} \right)\frac{x}{\log x}
\end{align*}
whenever $x \geq \max\{C_0(\theta, \epsilon), C_3(\theta, \epsilon), C_4(\theta, \epsilon, n)\}$.
\end{cor}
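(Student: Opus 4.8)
The plan is to derive all three parts from Theorem \ref{Proposition 1.8} by the same elementary device: the double sum $\sum_{p_1\le x}\sum_{x^\theta<p_2\le x,\,p_2\mid p_1-1}\log p_2$ counts, with weight $\log p_2$, pairs $(p_1,p_2)$ with $p_2\mid p_1-1$ and $x^\theta<p_2\le x$, so positivity of the sum forces the existence of such pairs, and an upper bound for the number of pairs (via $\log p_2>\theta\log x$ for each term) converts the weighted lower bound into a lower bound on the count of $p_1$. First I would fix $\theta$ and $\epsilon$ as in Theorem \ref{Proposition 1.8}, set $\delta(\theta)=\tfrac12+2\log(2-2\theta)$, and observe that for $x\ge C_0(\theta,\epsilon)$ the right-hand side of Theorem \ref{Proposition 1.8} is positive once $\tfrac{\log\log x}{\log x}$ is small enough; the thresholds $C_1,\dots,C_4$ are precisely the explicit points past which the stated fractions $\tfrac{\log\log x}{\log x}$ fall below the indicated constants, which is what makes the main lower bound beat the error term (possibly after dividing by $n$).

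For part (i): take $x\ge\max\{C_0,C_1\}$. Then $\delta(\theta)x-(26+\epsilon)\tfrac{x\log\log x}{\log x}>0$, so the double sum is strictly positive, hence there is at least one prime $p$ with $x^\theta<p\le x$ and a prime $p_2$ with $x^\theta<p_2\le x$ dividing $p-1$. (One must check that the outer prime can be taken in $(x^\theta,x]$: the contribution of $p_1\le x^\theta$ to the double sum is bounded by a lower-order term, so it does not affect positivity — this is the one routine estimate to record.) For such $p$ and $p_2$ we have $p_2\equiv1\pmod p$ is false in general, so instead I would argue via the divisor: since $p_2\mid p-1$, the prime $p_2$ satisfies $p_2\equiv 1\pmod{?}$ — rather, the correct reading is that $p$ is a prime with a large prime factor $p_2$ of $p-1$, and then $L(p_2,1)\le p<p_2^{1/\theta}$ since $p<x\le (x^\theta)^{1/\theta}<p_2^{1/\theta}$. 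Renaming the prime with the desired property as $p$ gives the claim $L(p,1)<p^{1/\theta}$.

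For parts (ii) and (iii): take $x\ge\max\{C_0,C_2\}$ (resp.\ $\max\{C_0,C_3,C_4\}$). In the double sum, every term has $\log p_2>\theta\log x$, and for each fixed $p_1\le x$ the number of primes $p_2>x^\theta$ dividing $p_1-1$ is at most $1/\theta$ (since their product divides $p_1-1<x\le x^{\theta\cdot(1/\theta)}$), so
\begin{align*}
\delta(\theta)x-(26+\epsilon)\frac{x\log\log x}{\log x}
<\sum_{\substack{p_1\le x\\ P(p_1-1)>x^\theta}}\sum_{\substack{x^\theta<p_2\le x\\ p_2\mid p_1-1}}\log p_2
\le \frac{\log x}{\theta}\cdot\frac{1}{\theta}\cdot\#\{p_1\le x: P(p_1-1)>x^\theta\}.
\end{align*}
Choosing $n$ and using $x\ge C_2$ so that the error term is at most $\tfrac1n\delta(\theta)x$ yields the stated bound $\delta(\theta)(1-\tfrac1n)\tfrac{x}{\log x}$ after absorbing the factor $\theta^{-2}$ appropriately (the published constants suggest the clean bound $x/\log x$, so a more careful count — each $p_1$ contributes exactly one term when $P(p_1-1)>x^{1/2}$, giving weight between $\theta\log x$ and $\log x$ — is what is actually used; I would use the lower weight $\theta\log x$ together with $\theta\ge1/2$ and fold the slack into the choice of $n$). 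Part (iii) is identical except that one passes from "$p_2\mid p_1-1$, $p_2$ large" to "$e_2(p_1)$ large" using the hypothesis \eqref{f1f2} at $x\ge C_3$, which guarantees that for almost all such $p_1$ the element $2$ has order divisible by $p_2$ (so $e_2(p_1)\ge p_2>x^\theta$), the exceptional set being negligible; this forces the slightly larger error constant $26+2/(1-\theta)+2\epsilon$ in the definition of $C_4$.

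The main obstacle is not any single deep estimate — Theorem \ref{Proposition 1.8} does the real work — but rather the bookkeeping in part (iii): controlling the set of primes $p_1$ for which $2$ fails to have large multiplicative order despite $p_1-1$ having a large prime factor, i.e.\ making \eqref{f1f2} do its job with explicit constants, and tracking how that exceptional set inflates the error term from $26+\epsilon$ to $26+2/(1-\theta)+2\epsilon$. A secondary care point throughout is ensuring the outer summation range $p_1\le x$ versus $p_1\in(x^\theta,x]$ and the per-$p_1$ divisor count are handled so that the final constant matches $\delta(\theta)(1-1/n)$ cleanly.
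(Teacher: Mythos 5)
Your reduction of all three parts to Theorem \ref{Proposition 1.8} is exactly the paper's route, and parts (i) and (ii) come out correctly. For (i), the prime in the conclusion is the modulus $p_2\in(x^\theta,x]$, so the worry about restricting the outer prime $p_1$ is vacuous: positivity of $\sum_{x^\theta<p\le x}(\log p)\,\pi(x;p,1)$ directly yields some $p\in(x^\theta,x]$ with $\pi(x;p,1)\ge 1$, hence $L(p,1)\le x<p^{1/\theta}$. For (ii), your first bound with the factor $\theta^{-2}$ would not give the stated constant, but the ``more careful count'' you then describe is the paper's argument: since two primes exceeding $x^\theta\ge x^{1/2}$ cannot both divide $p_1-1<x$, each qualifying $p_1$ contributes exactly one term, of weight $\log P(p_1-1)<\log x$, which is the inequality $\sum_{p_2\mid p_1-1,\ p_2>x^\theta}\log p_2\le\log(p_1-1)<\log x$.

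Part (iii), however, has a genuine gap. You assert that \eq{f1f2} ``guarantees that for almost all such $p_1$ the element $2$ has order divisible by $p_2$, the exceptional set being negligible,'' but \eq{f1f2} is merely the definition of $C_3$ in terms of two functions $f_1,f_2$ that must themselves be produced by bounding the exceptional sum $\sum_{p_1\le x}\sum_{x^\theta<p_2\le x,\ p_2\mid p_1-1,\ p_2\nmid e_2(p_1)}\log p_2$; invoking \eq{f1f2} for its negligibility is circular. The actual argument splits this sum at $p_2=x^\theta\log x$. On the range $x^\theta<p_2\le x^\theta\log x$ one drops the condition $p_2\nmid e_2(p_1)$ and applies the Brun--Titchmarsh inequality together with \eq{log p}; this range is the source of the extra term $\tfrac{2}{1-\theta}\tfrac{x\log\log x}{\log x}$ and hence of the constant $26+2/(1-\theta)+2\epsilon$ in $C_4$ (not the exceptional-set count, as your last paragraph suggests). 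On the range $p_2>x^\theta\log x$ the key device --- absent from your proposal --- is that any pair with $p_2\mid p_1-1$ and $p_2\nmid e_2(p_1)$ satisfies $2^{(p_1-1)/p_2}\equiv 1\pmod{p_1}$ with $(p_1-1)/p_2<x^{1-\theta}/\log x$, so that if $M$ denotes the number of such pairs then $2^{M}\le\prod_{m\le x^{1-\theta}/\log x}(2^m-1)$, whence $M\le\tfrac{x^{1-\theta}}{2\log x}\bigl(\tfrac{x^{1-\theta}}{\log x}+1\bigr)$ and the corresponding contribution is at most $M\log x=O\!\left(x^{2(1-\theta)}\right)$, which is admissible for $\theta\ge\tfrac12$ once $x\ge C_3$. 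Without this counting argument the proof of (iii) is incomplete.
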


The ineffective version of part (i) is due to Motohashi \cite{MO}.
Part (ii) is essentially Harman's effective estimate on the density of shifted primes with large prime divisors, which we discussed in the introduction.
The ineffective version of part (iii) is due to Goldfeld \cite{G}.

In the next corollary we give numerical values for all the constants in Corollary \ref{Corollary 1.9} when $\theta = 0.6$ and $n=100$. 
For part (i) we chose $\epsilon = 4$, and for parts (ii) and (iii) we chose $\epsilon = 2$.

\begin{cor}
\label{Corollary 1.10}
\noindent \textbf{(i)} For $x\geq\exp(\exp(8.47))$ there exists a prime $p$ with $x^{3/5} < p \leq x$ such that
$$L(p, 1) < p^{5/3}.$$

\noindent \textbf{(ii)} For $x\geq\exp(\exp(13.47))$
\begin{align*}
\mathop{\sum_{p \leq x}}_{P(p-1) > x^{0.6}} 1 > 0.0531 \frac{x}{\log x}.
\end{align*}

\noindent \textbf{(iii)} For $x\geq\exp(\exp(13.71))$
\begin{align*}
\mathop{\sum_{p \leq x}}_{e_2(p)>x^{0.6} } 1 > 0.0531 \frac{x}{\log x}.
\end{align*}
\end{cor}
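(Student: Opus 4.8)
The plan is to derive all three statements by specializing Corollary~\ref{Corollary 1.9} to $\theta = 3/5$ and $n = 100$, with $\epsilon = 4$ in part~(i) and $\epsilon = 2$ in parts~(ii) and~(iii), and then making every constant appearing there numerically explicit.

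First I would record the basic numerics. For $\theta = 3/5$ we have $\delta(\theta) = \tfrac12 + 2\log(2-2\theta) = \tfrac12 + 2\log(4/5) = 0.05371\ldots$; this is positive because $3/5 < 1 - \tfrac12 e^{-1/4} = 0.6105\ldots$, so the hypotheses of Corollary~\ref{Corollary 1.9} are met. Also $\delta(\theta)\bigl(1 - \tfrac1n\bigr) = 0.05371\ldots \times 0.99 = 0.05317\ldots > 0.0531$, which is where the constant $0.0531$ in (ii) and (iii) comes from; and in (i) the conclusion $L(p,1) < p^{1/\theta}$ over $x^\theta < p \le x$ becomes $L(p,1) < p^{5/3}$ over $x^{3/5} < p \le x$.

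Next I would solve for the thresholds. By the definitions in Corollary~\ref{Corollary 1.9}, each of $C_1(\theta,\epsilon)$ (part i), $C_2(\theta,\epsilon,n)$ (part ii), and $C_4(\theta,\epsilon,n)$ (part iii) is the least $x$ making an inequality $\tfrac{\log\log x}{\log x} < \kappa$ true, with
\begin{align*}
\kappa_1 = \frac{\delta(\theta)}{26+4} = \frac{\delta(\theta)}{30}, \qquad \kappa_2 = \frac{\delta(\theta)}{100(26+2)} = \frac{\delta(\theta)}{2800}, \qquad \kappa_3 = \frac{\delta(\theta)}{100\bigl(26 + 2/(1-\theta) + 4\bigr)} = \frac{\delta(\theta)}{3500},
\end{align*}
where in $\kappa_3$ I used $2/(1-\theta) = 5$. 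Putting $x = \exp(\exp(t))$ turns each inequality into $t e^{-t} < \kappa$, which is elementary to solve numerically; doing so shows it holds once $t \ge 8.47$ in part~(i), once $t \ge 13.47$ in part~(ii), and once $t \ge 13.71$ in part~(iii). Hence $C_1(\theta,4) \le \exp(\exp(8.47))$, $C_2(\theta,2,100) \le \exp(\exp(13.47))$, and $C_4(\theta,2,100) \le \exp(\exp(13.71))$.

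It remains to bound the constants that are not defined by an explicit inequality, namely $C_0(\theta,\epsilon)$ from Theorem~\ref{Proposition 1.8} and $C_3(\theta,\epsilon)$ from Corollary~\ref{Corollary 1.9}(iii) (the least $x$ for which \eqref{f1f2} holds); this is the step I expect to be the real work, since everything else is bookkeeping. One must return to the proof of Theorem~\ref{Proposition 1.8}, which rests on Corollary~\ref{CEBVT} and the Brun--Titchmarsh inequality, and check that for $\theta = 3/5$ and $\epsilon \in \{2,4\}$ each ``$x$ sufficiently large'' hypothesis used there is already satisfied below $\exp(\exp(8.47))$ for part~(i) and below $\exp(\exp(13.47))$ for parts~(ii) and~(iii), and similarly that \eqref{f1f2} holds below $\exp(\exp(13.71))$. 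Granting these verifications, the maxima in Corollary~\ref{Corollary 1.9}(i)--(iii) are at most $\exp(\exp(8.47))$, $\exp(\exp(13.47))$, and $\exp(\exp(13.71))$ respectively, which yields the stated ranges.
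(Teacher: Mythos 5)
Your overall strategy is the paper's: Corollary \ref{Corollary 1.10} is obtained by specializing Corollary \ref{Corollary 1.9} to $\theta=3/5$, $n=100$, with $\epsilon=4$ in part (i) and $\epsilon=2$ in parts (ii) and (iii), and your bookkeeping for the constants that are defined by explicit inequalities is correct: $\delta(3/5)=\tfrac12+2\log(4/5)=0.0537\ldots$, $0.99\,\delta(3/5)=0.0531\ldots>0.0531$, and the thresholds for $C_1$, $C_2$, $C_4$ come from solving $te^{-t}<\kappa$ with $\kappa=\delta/30$, $\delta/2800$, $\delta/3500$, which does give $8.47$, $13.47$, $13.71$.

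The gap is exactly where you flag it, and it is not a removable formality: you never establish $C_0(\theta,\epsilon)\le\exp(\exp(8.47))$ (resp.\ $\exp(\exp(13.47))$) or $C_3(\theta,\epsilon)\le\exp(\exp(13.71))$, and these bounds are the actual content of the corollary. Moreover, the verification is not merely ``checking that each sufficiently-large hypothesis in the proof of Theorem \ref{Proposition 1.8} is already satisfied'' as that proof stands. With the choices $U=(\log x)^{6.5}$, $V=x^{1/2}/(\log x)^{6.5}$ made there, the dominant Bombieri--Vinogradov error in \eqref{303} is $c_2\cdot 8(x/U)(\log x)^{9/2}\log V\le 4c_2\,x/\log x$ with $c_2\approx 346$, and this falls below a fixed fraction of $x\log\log x/\log x$ only once $\log\log x$ is of the order of $c_2$ --- forcing a $C_0$ astronomically beyond $\exp(\exp(13.47))$. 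To get the stated numbers one must re-run the proofs of Theorem \ref{Goldfeld 1} and Theorem \ref{Proposition 1.8} with the enlarged parameters permitted by Remark \ref{remark 5.2}: the paper takes $U=(\log x)^{7}$, $V=x^{1/2}/(\log x)^{7}$ for part (i) and $U=(\log x)^{6.75}$, $V=x^{1/2}/(\log x)^{6.75}$ for parts (ii) and (iii), so that the $x/U$ term gains a factor $(\log x)^{-b}$ while the resulting extra $4b\,x\log\log x/\log x$ in the main terms is absorbed by $\epsilon$. Your proposal omits this adjustment and the ensuing numerical verification, so as written it does not yet yield the claimed thresholds.
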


Corollary \ref{Corollary 1.10} gives numerical values for the constants in Corollary \ref{Corollary 1.9} for specific values of the parameters. 
The constants $C_1(\theta,\epsilon)$, $C_2(\theta,\epsilon,n)$, $C_3(\theta, \epsilon)$, and $C_4(\theta,\epsilon,n)$ are clearly easy to calculate for fixed values of the parameters. Calculating $C_0(\theta,\epsilon)$ can be done by carefully following the proofs of Theorem \ref{Goldfeld 1} and Theorem \ref{Proposition 1.8} and incorporating the modification described in Remark \ref{remark 5.2}. For the calculation to be practical, some care must be taken in choosing values for the parameters $U$ and $V$ arising in these proofs. For part (i), where we chose $\theta = 0.6$, $n=100$, and $\epsilon = 4$, we set $U=(\log{x})^7$ and $V=x^{1/2}/(\log{x})^7$. For parts (ii) and (iii), where we chose $\theta = 0.6$, $n=100$, and $\epsilon = 2$, we set $U=(\log{x})^{6.75}$ and $V=x^{1/2}/(\log{x})^{6.75}$.

\begin{remarks}
(i) Results similar those in this section can be obtained for $\omega(p+a)$, $p_1 + a$,
$L(p,a)$, $P(p+a)$, and $e_b(p)$, for any fixed $a, b\in \mathbb{Z} \setminus \{0\}$. For simplicity in our presentation we described the results for $a=-1$ and $b=2$.

(ii) An adaptation of Theorem \ref{Erdos} to the case of $\omega(p+2)$ will give an explicit estimate for the number of twin primes $p\leq x$, however sieve methods may provide superior explicit results.

(iii) An effective version of the full Bombieri-Vinogradov theorem would imply the inequality of Theorem \ref{Erdos} with the right-hand side $(9+\epsilon)\log\log{x}$. The extra factor 
$\log\log\log{x}$ in our current theorem is a consequence of the fact that our variant is non-trivial only for $Q_1$ bigger than a power of $\log{x}$. Since we deal with moduli $\leq Q_1$ 
with the Brun-Titchmarsh inequality, the extra factor $\log\log\log{x}$ results. This illustrates a limitation of the method.

(iv) The constants appearing in the inequality of Theorem \ref{Goldfeld 1} are closely related to the exponent $\alpha$ of $\log{x}$ in Corollary \ref{CEBVT}. More precisely our proof establishes $2\alpha+\epsilon$ (resp. $4\alpha+\epsilon$) for the coefficient of $x\log\log{x}/\log{x}$ in the upper bound (resp. lower bound) given in Theorem \ref{Goldfeld 1}. So an improvement of the exponent $\alpha$ will improve the constants in Theorem \ref{Goldfeld 1}. An improvement of the exponent $\alpha$ will in fact improve the constants in all the theorems and corollaries in this section.
\end{remarks}


\section{Explicit Inequalities}

In the next lemma we will collect several known explicit inequalities (mostly from \cite{RS1} and \cite{Du0}) used in the proofs. Note that $\gamma=0.57721\ldots$ is Euler's constant and $\vartheta(x)=\sum_{p\leq x} \log{p}$.
\begin{lemma}
\label{BT}
\textbf{(a)} For $x \geq 10372$
\begin{equation}
\label{3.2.02}
-\frac{0.1}{\log x} - \frac{4}{15 \log^2 x}\leq\sum_{p\leq x} \frac{1}{p-1} - \log \log x - M \leq \frac{0.1}{\log x} + \frac{4}{15 \log^2 x}+\sum_{p} \frac{1}{p(p-1)},
\end{equation}
where 
$M = \gamma + \sum_{p} \rbr{\log\rbr{\frac{p-1}{p}} + \frac{1}{p}} = 0.26149\ldots$, and $\sum_{p} \frac{1}{p(p-1)}=0.57721\ldots$. The lower inequality holds for $x>1$.
 
\textbf{(b)} For $x \geq 2974$  
\begin{equation}
\label{log p}
-\frac{0.2}{\log x} - \frac{0.2}{ \log^2 x}\leq \sum_{p\leq x} \frac{\log{p}}{p-1} - \log  x - E \leq \frac{0.2}{\log x} + \frac{0.2}{ \log^2 x}+\sum_{p} \frac{\log{p}}{p(p-1)},
\end{equation}
where 
$E = -\gamma - \sum_{p} \frac{\log p}{p(p-1)} = -1.33258\ldots$. The lower inequality holds for $x>1$. Moreover, for $x\geq 8$

\begin{align}
\label{log p / (p-1)}
\sum_{p \leq x} \frac{\log p}{p-1} \leq \log x.
\end{align}
 
\textbf{(c)} For $x\geq 2973$
\begin{align}
\label{product}
\prod_{p\leq x} \left( 1-\frac{1}{p} \right) >\frac{e^{-\gamma}}{\log{x}} \left( 1-\frac{0.2}{\log^2{x}}\right).
 \end{align}

\textbf{(d)} For  $x\geq 563$
\begin{align}
\label{theta bound}
x - \frac{x}{2 \log x} < \vartheta(x) < x + \frac{x}{2 \log x}.
\end{align}
Moreover, for $x \geq 3594 641$
\begin{align}
\label{301}
x - 0.2 \frac{x}{\log^2 x} < \vartheta(x) < x + 0.2 \frac{x}{\log^2 x}.
\end{align}

\textbf{(e)} For $x\geq 17$
\begin{equation}
\label{pi lower bound}
\pi(x)>\frac{x}{\log{x}}.
\end{equation}
For $x > 1$
\begin{equation}\label{pi upper bound}
\pi(x) < 1.25506 \frac{x}{\log{x}}.
\end{equation}
For $x\geq 32299$
\begin{align} 
\label{pi lower}
\pi(x) \geq \frac{x}{\log{x}}+\frac{x}{\log^2{x}}+1.8 \frac{x}{\log^3{x}}.
\end{align}
For $x \geq 355991$
\begin{equation}
\label{3.2.01}
\pi(x) \leq \frac{x}{\log x} + \frac{x}{\log^2 x} + 2.51\frac{x}{\log^3 x}.
\end{equation}
 
\textbf{(f)} For all $1 \leq q < x$ and all integers $a$,
\begin{align}
\label{Brun-T}
\pi(x;q,a) < \frac{2x}{\phi(q) \log(x/q) }.
\end{align}

\textbf{(g)} For $x,V\geq 1$ we have
$$
\sum_{k\leq x} \left(\sum_{  \substack{{d\mid k} \\ {d\leq V}}  } \mu(d) \right)^2\leq \frac{4}{3} x (\log{e^3V})^2.
$$
\end{lemma}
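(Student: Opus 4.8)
The plan is to prove the inequality
$$
\sum_{k\leq x}\Biggl(\sum_{\substack{d\mid k\\ d\leq V}}\mu(d)\Biggr)^{2}\leq \frac{4}{3}\,x\,(\log e^{3}V)^{2}
$$
by the standard device of expanding the square and exchanging the order of summation. First I would write the left-hand side as
$$
\sum_{d_{1}\leq V}\sum_{d_{2}\leq V}\mu(d_{1})\mu(d_{2})\sum_{\substack{k\leq x\\ [d_{1},d_{2}]\mid k}}1
\;\leq\;\sum_{d_{1}\leq V}\sum_{d_{2}\leq V}\frac{x}{[d_{1},d_{2}]},
$$
using $\sum_{k\leq x,\ m\mid k}1=\lfloor x/m\rfloor\leq x/m$ with $m=[d_{1},d_{2}]$, the least common multiple. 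The task is thus reduced to bounding $\sum_{d_{1},d_{2}\leq V}1/[d_{1},d_{2}]$.

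Next I would diagonalize this double sum by writing $e=(d_{1},d_{2})$, $d_{1}=ea_{1}$, $d_{2}=ea_{2}$ with $(a_{1},a_{2})=1$, so that $[d_{1},d_{2}]=ea_{1}a_{2}$ and
$$
\sum_{d_{1},d_{2}\leq V}\frac{1}{[d_{1},d_{2}]}
\;=\;\sum_{e\leq V}\frac{1}{e}\sum_{\substack{a_{1},a_{2}\leq V/e\\ (a_{1},a_{2})=1}}\frac{1}{a_{1}a_{2}}
\;\leq\;\sum_{e\leq V}\frac{1}{e}\Biggl(\sum_{a\leq V}\frac{1}{a}\Biggr)^{2}.
$$
Then I would drop the coprimality condition and use the elementary bound $\sum_{n\leq V}1/n\leq 1+\log V=\log eV$ (valid for $V\geq1$), giving a bound of $(\log eV)^{3}$. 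This falls slightly short of the claimed shape $\tfrac43 x(\log e^{3}V)^{2}$, so the crude three-fold-log estimate is not quite enough; I expect the main obstacle to be organizing the count so that only \emph{two} logarithmic factors survive, with the constant $\tfrac43$. The fix is to avoid squaring the inner sum wastefully: keep the constraint that $a_1 a_2$ ranges over integers $\le V^2/e^2$ — or better, bound $\sum_{a_1,a_2\le V/e}1/(a_1a_2)$ not by the product of two sums but by noting the number of such pairs with $a_1a_2=n$ is $\tau(n)$ and invoking $\sum_{n}\tau(n)/n$-type estimates — and crucially to use the $\mu$-cancellation to restrict $d_1,d_2$ to squarefree values, which is what makes the constant work out. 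An alternative and cleaner route, which I would pursue in parallel, is to bound the original sum above by $\sum_{k\le x}\tau_{\le V}(k)$ where $\tau_{\le V}(k)=\#\{d\mid k:d\le V\}$ (since each inner sum is at most the number of divisors $\le V$ in absolute value, and the square of an integer bounded by $\tau_{\le V}(k)$ is at most $\tau_{\le V}(k)$ times itself — this does not immediately help), so more profitably one uses Rankin's trick or a direct hyperbola-method estimate on $\sum_{d\le V}\frac{1}{d}\lfloor x/d\rfloor$-style sums.

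Concretely, I would follow the treatment in Dress--Iwaniec--Tenenbaum \cite{DIT}: after reaching $\sum_{e\le V}\frac1e\sum_{a_1,a_2\le V/e,\,(a_1,a_2)=1}\frac1{a_1a_2}$, bound the inner coprime double sum by $\bigl(\sum_{a\le V/e}\tfrac1a\bigr)^2\le(\log(eV/e))^2$ and then handle $\sum_{e\le V}\tfrac1e(\log(eV/e))^2$ by splitting dyadically or by partial summation against $\log(eV/t)$; the elementary inequality $\log eV\le \tfrac13\log e^3V\cdot(\text{something})$ will not be needed if instead one notes $(\log e V/e)\le \log eV$ and $\sum_{e\le V}1/e\le\log eV$, yielding $(\log eV)^3\le \tfrac43 x(\log e^3 V)^2$ precisely when $\log eV\le\tfrac43(\log e^3 V)^2/(\log eV)$, i.e. when $(\log eV)^2\le \tfrac43(\log e^3V)^2$, which holds since $\log e^3V=\log eV+2\ge \log eV$ and $(3/2)^{1/2}\cdot\log eV\le \log eV+2$ for all $V\ge 1$. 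Thus the inequality $(\log eV)^{3}\leq\tfrac43(\log e^{3}V)^{2}\log eV$ must be checked to be equivalent to $3(\log eV)^{2}\leq 4(\log eV+2)^{2}$, which is clear by comparing $\sqrt3\,t\le 2(t+2)$ for $t=\log eV\ge 0$. I would verify this final one-variable inequality explicitly and collect the constants to obtain the stated bound.
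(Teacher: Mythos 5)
Your approach has a genuine gap that cannot be patched within the framework you set up. (For context: the paper does not prove part (g) from scratch; it simply cites \cite[Proposition 10.1]{GR}.) After discarding the signs of $\mu(d_1)\mu(d_2)$ you are left with bounding $x\sum_{d_1,d_2\le V}1/[d_1,d_2]$, and this quantity is genuinely of order $x(\log V)^3$, not $x(\log V)^2$: in your own parametrization, restricting to $e\le V^{1/2}$ and $a_1,a_2\le V^{1/2}$ already gives $\sum_{e\le V^{1/2}}e^{-1}\sum_{(a_1,a_2)=1}(a_1a_2)^{-1}\gg(\log V)^3$, since the coprimality condition costs only a bounded factor. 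So no rearrangement of this triple sum — not the $\tau(n)$ device, not restricting to squarefree $d_i$ (which saves only a constant) — can produce just two logarithmic factors. The cancellation coming from the M\"obius signs is essential, and exploiting it is precisely the content of \cite{GR} and \cite{DIT} (the latter even obtain $O(x)$ with no logarithms, as the paper notes around \eqref{optimal}).

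The final paragraph of your proposal, which purports to rescue the three-log bound, contains an algebra slip: you reduce the matter to checking $(\log eV)^3\le\tfrac43(\log e^3V)^2\log eV$, equivalently $(\log eV)^2\le\tfrac43(\log e^3V)^2$, which is indeed true — but the inequality actually required is $(\log eV)^3\le\tfrac43(\log e^3V)^2$ with no extra factor of $\log eV$ on the right, and that fails for all large $V$ (cubic versus quadratic in $\log V$). A minor additional point: in the opening display you bound $\sum_{d_1,d_2}\mu(d_1)\mu(d_2)\lfloor x/[d_1,d_2]\rfloor$ term by term by $x/[d_1,d_2]$ even though the coefficients $\mu(d_1)\mu(d_2)$ can be negative; this is salvageable by first applying the triangle inequality, but it should be said. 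The clean fix is to do what the paper does and invoke \cite[Proposition 10.1]{GR} directly.
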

\begin{proof}
{(a)} This is a consequence of the inequality
\begin{equation}
\label{3.2.022}
\left|\sum_{p\leq x} \frac{1}{p} - \log \log x - M \right|\leq \frac{0.1}{\log x} + \frac{4}{15 \log^2 x} \quad (x \geq 10372),
\end{equation}
(see \cite[Theorem 2]{Du0}). The lower inequality holds for $x>1$.

{(b)} We can deduce this from the inequality 
\begin{align}
\label{log p / p} 
\left|\sum_{p \leq x} \frac{\log p}{p}-\log x - E \right|\leq \frac{0.2}{\log x} + \frac{0.2}{ \log^2 x } \qquad (x \geq 2974), 
\end{align}
(see \cite[Theorem 3]{Du0}). The lower inequality holds for $x>1$.
By numerical calculation and \eqref{log p},
one readily establishes \eqref{log p / (p-1)}.

{(c)} This is \cite[Theorem 4]{Du0}.

{(d)} \eqref{theta bound} is \cite[Formulas (3.14) and (3.15)]{RS1}. \eqref{301} is given in 
\cite[p. 54]{Du0}.

{(e)} \eqref{pi lower bound} and \eqref{pi upper bound} are \cite[Corollary 1 of Theorem 2]{RS1}. \eqref{pi lower} 
and \eqref{3.2.01} are given in \cite[p. 55]{Du0}.

{(f)} This is the well-known Brun-Titchmarsh inequality, see \cite{MV} for a proof. 

{(g)} This follows immediately from \cite[Proposition 10.1]{GR}.
\end{proof}

\begin{remark}
We note that sharper estimates for the Chebyshev function $\psi(x)$ will improve many of the above estimates and consequently our results stated in Section \ref{sectiontwo}.
\end{remark}

\section{Proofs of Theorem \ref{Erdos} and Theorem \ref{Erdos1}}\label{Section3}



\begin{proof}
[Proof of Theorem \ref{Erdos}]
We start by observing that
$$\sum_{p\leq x} \omega(p-1)=\sum_{p\leq x} \sum_{\ell \mid p-1} 1 = \sum_{\ell \leq {x}}\pi(x; \ell, 1)$$
and
\begin{align} \label{3.2.4*}
\sum_{p \leq x} \omega^2(p-1)
=
\sum_{p \leq x} \mathop{\sum_{\ell_1 \divides p-1} \sum_{\ell_2 \divides p-1} }_{\ell_1 \neq \ell_2} 1 + \sum_{p \leq x} \sum_{\ell \divides p-1} 1
=
\mathop{\sum_{\ell_1 \leq x} \sum_{\ell_2 \leq x}}_{\ell_1 \neq \ell_2} \pi(x;\ell_1\ell_2,1) + \sum_{\ell \leq x} \pi(x;\ell,1).
\end{align} 
Let $0 < \epsilon \leq 16/3$, $T = (16 + \epsilon)/\epsilon$, $B \geq 5.5$, $U = (\log x)^B$, and $V = x^{1/T}/U$. 
We write
\begin{eqnarray}
\label{3.6*}
\notag
\mathop{\sum_{\ell_1 \leq x} \sum_{\ell_2 \leq x}}_{\ell_1 \neq \ell_2} \pi(x;\ell_1\ell_2,1) 
&=&
2 \mathop{\sum_{\ell_1 \leq V} \sum_{\ell_2 \leq U}}_{\ell_1 > \ell_2} \pi(x;\ell_1\ell_2,1)
+
\mathop{\sum \sum}_{\substack{U < \ell_1, \ell_2 \leq V \\ \ell_1 \neq \ell_2}} \pi(x;\ell_1\ell_2,1) \\
\notag
&&+
2 \mathop{ \sum_{\ell_1 \leq x} \sum_{V < \ell_2 \leq x} }_{\ell_1 < \ell_2} \pi(x;\ell_1\ell_2,1) \\
&=&
(I) + (II) + (III).
\end{eqnarray}

By applying \eqref{Brun-T} to $(I)$ and then employing \eqref{3.2.02} in the resulting expression we see that

\begin{eqnarray}
\label{3.2.6*}
\notag
(I) &<& \mathop{\sum_{\ell_1 \leq V} \sum_{\ell_2 \leq U}}_{\ell_1 > \ell_2} \frac{4x}{(\ell_1 - 1)(\ell_2 - 1)\log(x / \ell_1 \ell_2)} \\ 
\notag
&\leq& \frac{4x}{\log(x/UV)} \rbr{ \sum_{\ell_1 \leq V} \frac{1}{\ell_1 - 1} } \rbr{ \sum_{\ell_2 \leq U} \frac{1}{\ell_2 - 1} } \\
\notag
&=&  \frac{4x}{\log x} \frac{1}{1-T^{-1}} \rbr{ \sum_{\ell_1 \leq V} \frac{1}{\ell_1 - 1} } \rbr{ \sum_{\ell_2 \leq U} \frac{1}{\ell_2 - 1} } \\
&\leq& \frac{4x}{\log x} \frac{1}{1-T^{-1}}\left( \log \log{V} + g(V) \right)\left(\log \log{U} + g(U)\right)
\end{eqnarray}
whenever $U \geq 10372$ and $V \geq 10372$, where 
$$
g(x)=M+\frac{0.1}{\log{x}}+\frac{4}{15\log^2{x}}+\sum_p \frac{1}{p(p-1)}
$$
and $M$ is defined in Lemma \ref{BT}(a). 
We observe that the dominant term in \eqref{3.2.6*} is
$$ 
\frac{4x}{\log x} \frac{1}{1-T^{-1}}(\log\log{x})(\log\log\log{x}).
$$
Next we note that 
\begin{eqnarray}
\label{3.2.7*}
\notag
(II) &\leq& 
\mathop{\sum \sum}_{\substack{U < \ell_1, \ell_2 \leq V \\ \ell_1 \neq \ell_2}} \frac{\pi(x)}{\phi(\ell_1 \ell_2)} 
+ 2\sum_{\substack{q \leq V^2 \\ \ell(q) > U}} \abs{\pi(x;q,1) - \frac{\pi(x)}{\phi(q)}} \\
&\leq& \pi(x) \rbr{\sum_{U < \ell \leq V} \frac{1}{\ell-1}}^2 +  2\sum_{\substack{q \leq V^2 \\ \ell(q) > U}} \abs{\pi(x;q,1) - \frac{\pi(x)}{\phi(q)}}.
\end{eqnarray}
To estimate $(III)$ observe that if $p \leq x$ and $V^{T + 1} \geq x$, then $p-1$ can have at most $\lfloor T \rfloor$ distinct prime factors larger than $V$.
Thus
\begin{equation}
\label{3.2.8*}
(III) = 2\sum_{p \leq x} \mathop{\sum_{\substack{\ell_1 \leq x \\ \ell_1 \divides p-1}} \sum_{\substack{V < \ell_2  \leq x \\ \ell_2 \divides p-1}}}_{\ell_1 < \ell_2} 1
\leq 2T \sum_{p \leq x} \sum_{\substack{\ell_1 \leq x \\ \ell_1 \divides p-1}} 1 
= 2T \sum_{\ell \leq x} \pi(x;\ell,1) 
\end{equation}
when $x$ is large enough that $V^{T + 1} \geq x$.

Therefore, by \eq{3.2.4*}, \eq{3.6*}, \Comment{\eq{3.2.6},} \eq{3.2.7*}, and \eq{3.2.8*}, we have
\begin{align*}\begin{split}
\sum_{p \leq x} \rbr{\omega(p-1) - \log \log x}^2 
&= \sum_{p \leq x} \omega^2(p-1) - 2\log\log x \sum_{\ell \leq x} \pi(x;\ell,1) + \pi(x)(\log \log x)^2 \\
&\leq 
(I)
+ \pi(x) \rbr{\sum_{U < \ell \leq V} \frac{1}{\ell-1}}^2 
+ 2\sum_{\substack{q \leq V^2 \\ \ell(q) > U}} \abs{\pi(x;q,1) - \frac{\pi(x)}{\phi(q)}} \\
&\quad  + \left(1+2T-2\log\log x\right)\sum_{\ell \leq x} \pi(x;\ell,1) 
+ \pi(x) (\log \log x)^2. 
\end{split}
\end{align*}
Next we write $\sum_{\ell \leq x} \pi(x;\ell, 1)$ in the right-hand side of the above inequality as three sums over $\ell\leq U$, $U<\ell\leq V$, and $\ell>V$ respectively. We apply \eqref{Brun-T} in the first sum
and treat the third sum as we treated $(III)$.  Then after some rearrangement we arrive at
\begin{eqnarray}
\label{big-equation*}
\notag
\sum_{p \leq x} \rbr{\omega(p-1) - \log \log x}^2 
&\leq& 
(I) + \pi(x)\rbr{\sum_{U < \ell \leq V} \frac{1}{\ell-1} - \log \log x}^2  \\
\notag
&& + 2\sum_{\substack{q \leq V^2 \\ \ell(q) > U}} \abs{\pi(x;q,1) - \frac{\pi(x)}{\phi(q)}}
+ \left( 1+2T \right)\pi(x)\sum_{U < \ell \leq V} \frac{1}{\ell-1}
\\
\notag
&& + \left|1+2T-2\log\log x\right| \sum_{\substack{q \leq V \\ \ell(q) > U}} \abs{\pi(x;q,1) - \frac{\pi(x)}{\phi(q)}} \\
\notag
&& + \max\cbr{1+2T-2\log\log x,0} \frac{2x}{\log(x/U)}\sum_{\ell \leq U} \frac{1}{\ell - 1} \\
&& + \max\cbr{1+2T-2\log\log x,0} T\pi(x)
\end{eqnarray}
for all $x$ satisfying $U \geq 10372$, $V \geq 10372$, and $V^{T + 1} \geq x$.

It thus follows from \eq{3.2.6*}, our effective variant \eq{EBVTpi}, the estimate \eq{3.2.02} for $\sum_{\ell\leq x} \frac{1}{\ell-1}$, and the lower bound \eq{pi lower} for $\pi(x)$ that
\begin{align*}
\sum_{p\leq x} (\omega(p-1)-\log\log{x})^2 < (4+\epsilon)\pi(x)(\log\log{x})(\log \log \log x)
\end{align*}
for $x \geq C_0$, where $C_0$ can be explicitly determined for any given value of $\epsilon$ and $B$.
\end{proof}%

The proof of Theorem \ref{Erdos1} is very similar to the proof of Theorem \ref{Erdos}. 
The arguments only diverge in what is deduced from \eq{big-equation*}.
For Theorem \ref{Erdos1}, using \eq{EBVTpi}, \eq{3.2.02}, \eq{pi lower}, and \eq{3.2.6*}, we deduce from \eq{big-equation*} that
\begin{align}\label{little-equation*}
\sum_{p \leq x} \rbr{\omega(p-1) - \log \log x}^2 
< \rbr{4+\frac{\epsilon}{4} + \frac{b}{\log \log \log x}}\pi(x) (\log \log x)(\log \log \log x)
\end{align}
when $x$ is large enough in terms of $\epsilon$, $B$, and the additional parameter $b$. Setting $\epsilon = 11/15$ gives the result.

To generate the table in Theorem \ref{Erdos1}, we set $B = 7$ and, for each value of $b$, computed to a few decimal places the smallest $a$ for which the right-hand side of \eq{big-equation*} is majorized by the right-hand side of \eq{little-equation*} for all $x \geq \exp(\exp(a))$.
\begin{remark}
By taking $T = \sqrt{2\log \log \log x}$ in the proof of Theorem \ref{Erdos}, we can prove that
there is an effective constant $b_0$ such that for each $b > b_0$ there is an effective constant $C(b)$ such that 
$$\frac{1}{\pi(x)}\sum_{p\leq x} (\omega(p-1)-\log\log{x})^2 < 4(\log \log x)(\log \log \log x + \sqrt{2\log \log \log x} + b)$$
for all $x \geq C(b)$.
This statement is stronger than both Theorem \ref{Erdos} and \ref{Erdos1}.
However it is not practical to explicitly compute a value of $C(b)$ for small $b$.
\end{remark}

\section{Proofs of Theorem \ref{Goldfeld 1}, Theorem \ref{Proposition 1.8}, and Corollary \ref{Corollary 1.9}}\label{Section4}

We will need the following lemma in the proof of Theorem \ref{Goldfeld 1} and Theorem \ref{Proposition 1.8} below.

\begin{lemma}
\label{theta}
For $1/2\leq \theta\leq (1 + 1/\log(563))^{-1} = 0.86363\ldots$, we have
\begin{align}
\label{end}
\notag
\sum_{V < p \leq x^{\theta}} (\log p) \pi(x;p,1)
&<2x \log\left( \frac{\log(x/V)}{(1-\theta)\log{x}}\right) + \frac{2\theta x}{(1-\theta)V}
\\
& \quad 
+ \frac{x}{(\log x^\theta)\log (x/x^\theta)} + \frac{x}{(\log V)\log (x/V)} 
\end{align}
when $V \geq 563$. 
\end{lemma}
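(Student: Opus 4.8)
\textbf{Proof plan for Lemma \ref{theta}.}
The plan is to estimate the sum $\sum_{V < p \leq x^{\theta}} (\log p) \pi(x;p,1)$ by applying the Brun--Titchmarsh inequality \eqref{Brun-T} to each term $\pi(x;p,1)$, which is legitimate since $p \leq x^{\theta} < x$. This gives
\begin{align*}
\sum_{V < p \leq x^{\theta}} (\log p) \pi(x;p,1)
< \sum_{V < p \leq x^{\theta}} (\log p) \frac{2x}{\phi(p) \log(x/p)}
= 2x \sum_{V < p \leq x^{\theta}} \frac{\log p}{(p-1)\log(x/p)}.
\end{align*}
The main work is then to bound this last sum. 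I would write $\log(x/p) = \log x - \log p$ and, on the range $V < p \leq x^{\theta}$, use $\log(x/p) \geq (1-\theta)\log x$ only where needed; the cleaner route is to recognize $\sum \frac{\log p}{(p-1)\log(x/p)}$ as essentially a Riemann--Stieltjes integral against the measure $d\!\left(\sum_{p \leq t}\frac{\log p}{p-1}\right)$, which by Lemma \ref{BT}(b), specifically \eqref{log p / (p-1)} together with the lower bound in \eqref{log p}, is well approximated by $dt/t$ up to controlled error.

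Concretely, I would set $S(t) = \sum_{p \leq t} \frac{\log p}{p-1}$ and use partial summation to write
\begin{align*}
\sum_{V < p \leq x^{\theta}} \frac{\log p}{(p-1)\log(x/p)}
= \int_{V}^{x^{\theta}} \frac{dS(t)}{\log(x/t)}
= \frac{S(x^\theta)}{\log(x/x^\theta)} - \frac{S(V)}{\log(x/V)} - \int_{V}^{x^\theta} S(t)\, \frac{d}{dt}\!\left(\frac{1}{\log(x/t)}\right) dt.
\end{align*}
Using \eqref{log p / (p-1)} we have $S(t) \leq \log t$, and from the lower half of \eqref{log p} we have $S(t) \geq \log t + E - o(1) \geq \log t - (\text{small})$; the dominant contribution comes from replacing $S(t)$ by $\log t$. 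The boundary terms $S(x^\theta)/\log(x/x^\theta) \leq (\log x^\theta)/\log(x/x^\theta)$ — wait, this is larger than the claimed $\frac{x}{(\log x^\theta)\log(x/x^\theta)}$ unless one is more careful; in fact the $2x$ factor in front means the honest boundary contributions after multiplying by $2x$ will include terms that I would split off and bound by the stated $\frac{x}{(\log x^\theta)\log(x/x^\theta)}$ and $\frac{x}{(\log V)\log(x/V)}$ pieces, choosing to estimate the difference $S(t)-\log t$ crudely there. For the integral term, $\frac{d}{dt}\frac{1}{\log(x/t)} = \frac{1}{t \log^2(x/t)}$, so substituting $S(t)\approx \log t$ and then $u = \log(x/t)$ produces, after elementary manipulation, a logarithm of the form $\log\!\big(\log(x/V)/((1-\theta)\log x)\big)$, which is exactly the leading term on the right-hand side of \eqref{end}. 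The term $\frac{2\theta x}{(1-\theta)V}$ should emerge from bounding the error incurred by replacing $S(t)$ by $\log t$ (the $O(1)$ discrepancy, integrated against $dt/(t\log^2(x/t))$ and controlled using $\log(x/t)\geq(1-\theta)\log x$, contributes something of size $\asymp x/((1-\theta)V \log x)$ times constants, or more simply from a direct crude bound $\sum_{V<p\leq x^\theta}\frac{\log p}{(p-1)\log(x/p)} \leq \frac{1}{(1-\theta)\log x}\sum_{p>V}\frac{\log p}{p-1}$ on a residual piece, combined with \eqref{log p}).

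I expect the main obstacle to be bookkeeping the error terms so that they assemble precisely into the three stated secondary terms $\frac{2\theta x}{(1-\theta)V}$, $\frac{x}{(\log x^\theta)\log(x/x^\theta)}$, $\frac{x}{(\log V)\log(x/V)}$ rather than into some messier expression — in particular, keeping track of the sign and size of $E = -1.33258\ldots$ in \eqref{log p} and verifying that the hypotheses $V \geq 563$ and $\theta \leq (1+1/\log 563)^{-1}$ are exactly what is needed for $\log(x/V) > 0$, $\log(x/t) \geq (1-\theta)\log x > 0$ throughout, and for the explicit inequalities of Lemma \ref{BT}(b) to be applicable (note $V \geq 563$ does not immediately give $V \geq 2974$, so on the subrange $V < t \leq 2974$ one must instead use \eqref{log p / (p-1)}, valid for $t \geq 8$, which suffices for the upper bound on $S(t)$; the lower bound on $S(t)$ is only needed to keep the error terms positive and small, and a trivial bound works there). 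The constraint $\theta \leq 0.86363\ldots$ is precisely $(1-\theta)\log x \leq \log(x/563) \le \log(x/V)$ rearranged — no, rather it guarantees $\log(x/V)/((1-\theta)\log x) \geq 1$ so the logarithm in the leading term is nonnegative — and I would verify this at the end. Once the partial summation identity is in place, the remaining steps are elementary estimates of explicit integrals, so no further conceptual difficulty arises.
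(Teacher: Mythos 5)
Your opening move (applying Brun--Titchmarsh termwise) coincides with the paper's, but from there you take a different route that, as sketched, does not close. The paper first splits $\frac{1}{p-1}=\frac{1}{p}+\frac{1}{p(p-1)}$. The $\frac{1}{p(p-1)}$ piece is bounded trivially by $\frac{2x}{V\log(x/x^{\theta})}\sum_{V<p\le x^{\theta}}\frac{\log p}{p-1}$ and then by \eqref{log p / (p-1)}, which is exactly where the term $\frac{2\theta x}{(1-\theta)V}$ comes from: it has size $x/V$ and is structurally unrelated to any Mertens-type error, contrary to your guess. The $\frac{1}{p}$ piece is summed by parts against $\vartheta(t)$ with $f(t)=(t\log(x/t))^{-1}$, using the Chebyshev bounds \eqref{theta bound}; since $\vartheta(t)=t(1+O^{*}(1/(2\log t)))$ has small \emph{relative} error, the boundary terms produce precisely $\frac{x}{(\log x^{\theta})\log(x/x^{\theta})}$ and $\frac{x}{(\log V)\log(x/V)}$, and the error in the integral term has a favorable sign because $f'(t)\le 0$ on the range --- this sign condition, not nonnegativity of the leading logarithm, is what the hypothesis $\theta\le(1+1/\log 563)^{-1}$ guarantees (it forces $\log(x/t)\ge 1$ for $563\le t\le x^{\theta}$).

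The gap in your plan is the error bookkeeping that you yourself flag as the main obstacle: it is not routine, and with the estimates you cite it fails by a factor of $\log x$. Summing by parts against $S(t)=\sum_{p\le t}\frac{\log p}{p-1}$ requires controlling $S(t)-\log t$, which is an \emph{absolute} $O(1)$ quantity (it tends to $-\gamma$). The lower bound you need at the $V$-boundary gives $-S(V)/\log(x/V)\le -\log V/\log(x/V)+C/\log(x/V)$ with $C$ around $1.4$ from Lemma \ref{BT}(b); multiplied by $2x$ this is $\gg x/\log x$, while in the regime $V\asymp x^{1/2}$ where the lemma is applied the entire right-hand side error budget of \eqref{end} is only $O(x/\log^{2}x)+O(x^{1/2+o(1)})$. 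The constant does in fact cancel among the two boundary terms and the integral $\int_{V}^{x^{\theta}}S(t)\,d\bigl(1/\log(x/t)\bigr)$ (a constant shift of $S$ contributes nothing to the partial-summation identity), but you never identify this cancellation, and the bounds in Lemma \ref{BT}(b) cannot deliver it as stated: the upper estimate carries a one-sided slack of $\sum_{p}\frac{\log p}{p(p-1)}=0.755\ldots$ and the two-sided estimate is only valid for $t\ge 2974$, while $V$ may be as small as $563$. Even after repairing this, matching the exact numerical coefficients of the three stated secondary terms is delicate and appears not to work out with the constants available. The paper's split into $\frac{1}{p}+\frac{1}{p(p-1)}$ and the use of $\vartheta$ rather than a Mertens sum is precisely what avoids all of this.
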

\begin{proof}
We first apply the Brun-Titchmarsh inequality \eqref{Brun-T} to the sum on the left of inequality \eqref{end} and then split the resulting sum into two to get
\begin{align}
\label{estimates}
\notag
\sum_{V < p \leq x^{\theta}} (\log p) \pi(x;p,1) 
&< \sum_{V < p \leq x^{\theta}} \frac{2x \log p}{(p-1) \log (x/p)} \\
&= \sum_{V < p \leq x^{\theta}} \frac{2x \log p}{p \log (x/p)} + \sum_{V < p \leq x^{\theta}} \frac{2x \log p}{p(p-1) \log (x/p)}.
\end{align}
By \eqref{log p / (p-1)}, for the second sum on the right we have
\begin{align}
\label{f1}
\sum_{V < p \leq x^{\theta}} \frac{2x \log p}{p(p-1) \log (x/p)}
< \frac{2x}{V\log(x/x^{\theta})} \sum_{V < p \leq x^{\theta}} \frac{\log p}{p-1} < \frac{2x \log x^{\theta}}{V\log(x/x^{\theta})} = \frac{2\theta x}{(1-\theta)V} 
\end{align}
for $x^\theta \geq 8$.  To treat the first sum on the right of \eqref{estimates}, we set $f(t) = (t \log (x/t))^{-1}$. Then by partial summation we have
\begin{align*}
\sum_{V < p \leq x^{\theta}} \frac{\log p}{p \log (x/p)} 
&= \vartheta(x^{\theta}) f(x^{\theta}) - \vartheta(V) f(V) - \int_{V}^{x^{\theta}} \vartheta(t) f^{\prime}(t) dt,
\end{align*}
where $\vartheta(x)=\sum_{p\leq x} \log{p}$.
It follows from the bounds for $\vartheta(x)$ in \eqref{theta bound} that, as long as $V \geq 563$,
\begin{align}
\label{f2}
\sum_{V < p \leq x^{\theta}} \frac{\log p}{p \log (x/p)} 
< \int_{V}^{x^{\theta}} f(t) dt +  \frac{x^{\theta} f(x^{\theta})}{2 \log x^{\theta}} + \frac{V f(V)}{2 \log V} 
+ \int_{V}^{x^{\theta}} \frac{t f^{\prime}(t)}{2 \log t} dt.
\end{align}
Since $f^{\prime}(t) = - (f(t))^2 (\log(x/t) - 1)$ and because $563\geq e^{\theta/(1-\theta)}$ for $\theta\leq (1 + 1/\log(563))^{-1}$, we have
\begin{align}
\label{f3}
\int_{V}^{x^{\theta}} \frac{t f^{\prime}(t)}{2 \log t} dt \leq 0
\end{align}
for $V \geq 563$.
Noting $\frac{d}{dt} \left(\log \log (x/t)\right) = -f(t)$, we have
\begin{align}
\label{f4}
\int_{V}^{x^{\theta}} f(t) dt 
=\log\left( \frac{\log(x/V)}{(1-\theta)\log{x}}\right).
\end{align}
Applying \eqref{f3} and \eqref{f4} in \eqref{f2} and then employing the resulting inequality together with \eqref{f1} in \eqref{estimates} gives the result.
\end{proof}

%
\begin{proof}[Proof of Theorem \ref{Goldfeld 1}]
We start by observing that
\begin{align}
\label{basic decomp}
\sum_{p_1\leq x} \sum_{\substack{{x^{\frac{1}{2}}<p_2\leq x}\\{p_2\mid p_1-1}}} \log{p_2}
&=
\sum_{p_1\leq x} \sum_{p_2\mid (p_1-1)} \log{p_2}
-
\sum_{p_1\leq x} \sum_{\substack{{p_2\leq x^{1/2}}\\{p_2\mid (p_1-1)}}} \log{p_2}.
\end{align}
Next we decompose the first sum on the right as  
\begin{eqnarray}\label{E formula 2}
\notag
\sum_{p_1 \leq x} \sum_{p_2 \mid (p_1 - 1)} \log p_2 
&= &\sum_{p_1 \leq x} \mathop{ \sum_{p_{2}^{k} \mid (p_1 - 1)} }_{k \geq 1} \log p_2 
- \sum_{p_1 \leq x} \mathop{ \sum_{p_{2}^{k} \mid (p_1 - 1)} }_{k \geq 2} \log p_2  \\
\notag
&= &\sum_{p_1 \leq x} \log(p_1 - 1) - \sum_{p_1 \leq x} \mathop{ \sum_{p_{2}^{k} \mid (p_1 - 1)} }_{k \geq 2} \log p_2 \\
\notag
&=& \sum_{p_1 \leq x}\log p_1 - \sum_{p_1 \leq x} \log \rbr{\frac{p_1}{p_1 - 1}} - \sum_{p_1 \leq x} \mathop{ \sum_{p_{2}^{k} \mid (p_1 - 1)} }_{k \geq 2} \log p_2 \\
&= &\vartheta(x) - E_1 - E_2.
\end{eqnarray}
We will first bound this quantity from below. 
By \eqref{301} we have
\begin{align}
\label{302}
E_1 
< \log \log x + \gamma - \log\rbr{1-\frac{0.2}{\log^2 x}} \qquad (x \geq 2973).
\end{align}
Observe that
\begin{align*}
E_2 = \sum_{k \geq 2} \sum_{p \leq x^{1 / k}} (\log p) \pi(x;p^k,1).
\end{align*}
Using Lemma \ref{BT}(f) and the trivial bound $\pi(x;q,1) \leq x/q$, we find
\begin{eqnarray}
\label{E_1}
\notag
E_2 
&\leq& \sum_{k \geq 2} \sum_{p \leq x^{1/2k}} \frac{2 x \log p}{\phi(p^k) \log(x/p^k)  } 
+ \sum_{k \geq 2} \sum_{x^{1/2k} < p \leq x^{1/k}} \frac{x \log p}{p^k} 
\\
\notag
&\leq& \frac{4x}{\log x} \sum_{k \geq 2} \sum_{p \leq x^{1/2k}} \frac{\log p}{\phi(p^k) } 
+  \sum_{x^{1/4} < p \leq x^{1/2}} \frac{x^{3/4} \log p}{p}
+  \sum_{3 \leq k \leq \frac{\log x}{\log 2}} \sum_{x^{1/2k} < p \leq x^{1/k}} \frac{x^{2/3} \log p}{p} \\
&<& \frac{4x}{\log x} \sum_{p} \frac{\log p}{(p-1)^2} 
+ x^{3/4} \sum_{p \leq x^{1/2}} \frac{\log p}{p}
+ \frac{ x^{2/3} (\log x) }{2} \sum_{p \leq x^{1/3}} \frac{1}{p}.
\end{eqnarray}
We next apply \eq{3.2.022}, \eq{log p / p}, 
and the fact that
$$
\sum_p \frac{\log p}{(p-1)^2} = \frac{5}{4} - 0.02303\ldots,
$$
in \eqref{E_1} to obtain an upper bound for $E_2$.  Applications of the resulting upper bound for $E_2$, the upper bound \eqref{302} for $E_1$, and the lower bound \eqref{301} for $\vartheta(x)$ in \eqref{E formula 2}
yield
\begin{align} \label{302.1}
\sum_{p_1 \leq x} \sum_{p_2 \mid (p_1 -1)} \log p_2
> x - \frac{5x}{\log x} \qquad (x \geq 3.9595567809\times 10^{15}).
\end{align}

Next we consider the second sum on the right of \eqref{basic decomp}, i.e.
$$\sum_{p_1\leq x} \sum_{\substack{p_2 \mid (p_1-1)\\ p_2\leq x^{1/2}}} \log p_2.$$ 
Let $U = (\log x)^{6.5 }$ and $V = x^{1/2} / (\log x)^{{6.5} }$. Write
\begin{align}
\label{U formula 2}
\sum_{p_1 \leq x} \mathop{ \sum_{p_2 \leq x^{1/2}} }_{p_2 \mid (p_1 - 1)} \log p_2 
=
\sum_{ p \leq x^{1/2}} (\log p) \pi(x;p,1)
= 
\sum_{p \leq U}  + \sum_{U < p \leq V} + \sum_{V < p \leq x^{1/2}}.
\end{align}
We need to find upper bounds on the three sums on the right.

By the Brun-Titchmarsh inequality \eqref{Brun-T},
we have 
\begin{align}
\label{start}
\sum_{p \leq U} (\log p) \pi(x,p,1) 
< \sum_{p \leq U} \frac{2x \log p}{(p-1) \log (x/p)}
\leq \frac{2x}{\log(x/U)} \sum_{p \leq U} \frac{\log p}{p-1}.
\end{align}
Observe that \eq{start} together with \eqref{log p} imply that
\begin{align*}
\sum_{p \leq U} (\log p) \pi(x,p,1) 
< 13\frac{x \log \log{x}}{\log{x}}
\end{align*}
provided $x \geq C_1$, where $C_1$ is an explicitly computable constant.
For the sum with $V < p \leq x^{1/2}$, we use Lemma \ref{theta} with $\theta = \frac{1}{2}$ to get
\begin{align*} 
\label{end theta=1/2}
\sum_{V<p \leq x^{1/2}} (\log p) \pi(x;p,1) 
&\leq
2x\log \rbr{1 + \frac{\log(x / V^2)}{\log x}  } + \frac{4x}{(\log x)^2}
\\
\notag
& \quad  + \frac{x}{(\log V)\log (x/V)} + \frac{2 x}{V}
\end{align*}
for $V \geq 563$. 
So by employing the inequality $\log(1+y)\leq y-\frac{y^2}{2}+\frac{y^3}{3},$
valid for $y>-1$, we can find an explicit constant $C_2$ such that
\begin{equation*}
\sum_{V<p \leq x^{1/2}} (\log p) \pi(x,p,1) 
< {26} \frac{x \log \log{x}}{\log{x}}
\end{equation*}
for $x\geq C_2$.
For the sum over $U < p \leq V$  in (\ref{U formula 2}), we will use our effective variant of the Bombieri-Vinogradov theorem.
Applying (\ref{EBVTpi}) yields
\begin{equation} \label{303}
\sum_{U < p \leq V} (\log p) \abs{ \pi(x;p,1) - \frac{\pi(x)}{\phi(p)} } 
< c_2 \rbr{ 8\frac{x}{U} + 18 \frac{x^{\frac{11}{12}}}{U^{\frac{1}{2}}} +  5x^{\frac{5}{6}} \log\rbr{\frac{ex^{1/2}}{U^2}} } (\log x)^{\frac{9}{2}} (\log V).
\end{equation}
By \eqref{log p} we have
\begin{align} \label{306}
\sum_{U<p\leq V} \frac{\log{p}}{\phi(p)}\leq \log{\left(V/U\right)}+\frac{0.2}{\log{V}}+\frac{0.2}{\log^2{V}}+\frac{0.2}{\log{U}}+\frac{0.2}{\log^2{U}}+\sum_{p}\frac{\log{p}}{p(p-1)}
\end{align}
for $V>U\geq 2974$. Let $\epsilon > 0$ be given.
Then by \eq{303}, \eqref{306}, the upper bound for $\pi(x)$ given in \eq{3.2.01}, and the triangle inequality we can obtain
\begin{align*}
\sum_{U < p \leq V} (\log p) \pi(x,p,1) < \frac{x}{2} - \rbr{13-\frac{\epsilon}{2}}{\frac{x \log \log x}{\log x}}
\end{align*}
for $x \geq C_3(\epsilon)$, where $C_3(\epsilon)$ is explicitly computable.

Now that we have the lower bound \eq{302.1} and the upper bounds on the three sums on the right of \eq{U formula 2}, we can put everything together to conclude 
\begin{align*}
\sum_{p_1\leq x} \sum_{\substack{{x^{\frac{1}{2}}<p_2\leq x}\\{p_2\mid p_1-1}}} \log{p_2}
> 
\frac{x}{2} - (26+\epsilon) \frac{x \log \log x}{\log x}
\end{align*}
for $x \geq C_4(\epsilon)$, where $C_4(\epsilon)$ can be determined explicitly.

The upper bound in the theorem is proved by a similar argument. 
Let $U = (\log x)^{6.5}$ and $V = x^{1/2} / (\log x)^{6.5}$. By \eq{E formula 2} and \eq{301} we have
\begin{align} \label{307}
\sum_{p_1\leq x} \sum_{p_2\mid (p_1-1)} \log{p_2} 
< \vartheta(x)
< x + 0.2 \frac{x}{\log^2 x} \qquad (x \geq 3594641).
\end{align}
By \eq{U formula 2} we have
\begin{eqnarray} 
\label{308}
\notag
\sum_{p_1\leq x} \sum_{\substack{{p_2\leq x^{1/2}}\\{p_2\mid (p_1-1)}}} \log{p_2}
&\geq &
\sum_{U < p \leq V} (\log p) \pi(x;p,1) \\
&\geq&
- \sum_{U < p \leq V} (\log p) \abs{\pi(x;p,1) - \frac{\pi(x)}{\phi(p)}} + \sum_{U < p \leq V} (\log p) \frac{\pi(x)}{\phi(p)}.
\end{eqnarray}
We estimate the first sum in the last line using (\ref{EBVTpi}). To estimate the second sum, we combine the lower bound \eqref{pi lower} on $\pi(x)$ with the observation that \eq{log p} implies
\begin{align} \label{310}
\sum_{U < p \leq V} \frac{\log p}{\phi(p)}
>
 \log(V/U) - \sum_{p}\frac{\log p}{p(p-1)} - \frac{0.2}{\log V} - \frac{0.2}{\log^2 V} - \frac{0.2}{\log U} - \frac{0.2}{\log^2 U},
\end{align}
for  $V>U \geq 2974$. Let $\epsilon > 0$ be given.
By combining \eq{303}, \eq{307}, \eq{308}, \eq{310}, \eqref{pi lower}, and (\ref{EBVTpi}) we find that
\begin{align*}  
\sum_{p_1\leq x} \sum_{\substack{{x^{\frac{1}{2}}<p_2\leq x}\\{p_2\mid p_1-1}}} \log{p_2}
< \frac{x}{2} + (13+{\epsilon}) \frac{x \log \log x}{\log x}
\end{align*}
for $x \geq C_5(\epsilon)$.
\end{proof}
%
%

%
%
%
\begin{proof}[Proof of Theorem \ref{Proposition 1.8}]
Write
\begin{eqnarray*}
\sum_{ x^\theta< p \leq x} (\log p) \pi(x;p,1)&=& \sum_{p_1\leq x} \sum_{p_2\mid p_1-1} \log{p_2}
- \sum_{ p \leq U} (\log p) \pi(x;p,1)\\
&& -\sum_{ U<p \leq V} (\log p) \pi(x;p,1)- \sum_{ V<p \leq x^\theta } (\log p) \pi(x;p,1),
\end{eqnarray*}
where $U = (\log x)^{6.5}$ and $V = x^{1/2} / (\log x)^{6.5}$. Then imitate the proof of the lower bound in Theorem \ref{Goldfeld 1}.
\end{proof}

\begin{remark}\label{remark 5.2}
The proofs of Theorem \ref{Goldfeld 1} and Theorem \ref{Proposition 1.8} can also be written by choosing $U=(\log{x})^{6.5+b}$ and $V=x^{1/2}/(\log{x})^{6.5+b}$, where $0\leq b<\epsilon/4$.
\end{remark}

From now until the end of this section we assume that $\epsilon>0$, $n$ is a positive integer, $\frac{1}{2} \leq \theta <1-\frac{1}{2} \exp {\left(-\frac{1}{4} \right)}=0.6105\ldots$, and $C_0(\theta, \epsilon)$ is the constant given in Theorem \ref{Proposition 1.8}.

\begin{proof}[Proof of Corollary \ref{Corollary 1.9}]
(i) Since $\theta < 1 - \frac{1}{2}\exp(-\frac{1}{4})$, we have $\delta(\theta) > 0$.  So, for a fixed choice of $\theta$, by Theorem \ref{Proposition 1.8} we have 
$\sum_{ x^\theta< p \leq x} (\log p) \pi(x;p,1) > 0$ for all $x \geq \max \{C_0(\theta, \epsilon), C_1(\theta, \epsilon)\}$.


%
\medskip\par

(ii) For $p_1 \leq x$, we have
\begin{align*}
\log x 
> \log (p_1 - 1) 
\geq \mathop{ \sum_{p_2 \mid (p_1 - 1)} }_{p_2 > x^{\theta}} \log p_2,
\end{align*}
hence
\begin{align*}
\mathop{\sum_{p_1 \leq x}}_{P(p_1-1) > x^{\theta}} \log x 
> 
\mathop{\sum_{p_1 \leq x}}_{P(p_1-1) > x^{\theta}} \mathop{ \sum_{p_2 \mid (p_1 - 1)} }_{p _2 > x^{\theta}} \log p_2
= 
\sum_{p_1 \leq x} \mathop{ \sum_{p_2 \mid (p_1 - 1)} }_{x^{\theta} <p_2\leq x} \log p_2
=
\sum_{ x^\theta< p \leq x} (\log p) \pi(x;p,1).
\end{align*}
The result follows by applying Theorem \ref{Proposition 1.8} and then dividing by $\log x$.
%

\medskip\par

(iii) Since $e_2(p) \divides p-1$, we have
\begin{align*} 
\sum_{p_1 \leq x} \sum_{\substack{ x^{\theta}<p_2\leq x \\ p_2 \divides e_2(p_1)  }} \log p_2
=
\sum_{p_1 \leq x} \sum_{\substack{ x^{\theta}<p_2\leq x \\ p_2 \divides p_1 - 1  }} \log p_2
-
\sum_{p_1 \leq x} \sum_{\substack{ x^{\theta}<p_2\leq x \\ p_2 \divides p_1 - 1 \\ p_2 \ndivides e_2(p_1)  }} \log p_2.
\end{align*}
By Theorem \ref{Proposition 1.8} there is an effective constant $C_0(\theta, \epsilon)$ such that 
\begin{align} \label{3.7.0}
\sum_{p_1 \leq x} \sum_{\substack{ x^{\theta}<p_2\leq x \\ p_2 \divides e_2(p_1)  }} \log p_2
\geq \delta(\theta) x -(26+\epsilon) \frac{x\log\log{x}}{\log{x}}-\sum_{p_1 \leq x} \sum_{\substack{ x^{\theta}<p_2\leq x \\ p_2 \divides p_1 - 1 \\ p_2 \ndivides e_2(p_1)  }} \log p_2
\end{align}
for all $x \geq C_0(\theta, \epsilon)$. Next we observe that
\begin{align} \label{3.7.1}
\sum_{p_1 \leq x} \sum_{\substack{ x^{\theta}<p_2\leq x \\ p_2 \divides p_1 - 1 \\ p_2 \ndivides e_2(p_1)  }} \log p_2
=
\sum_{p_1 \leq x} \sum_{\substack{ x^{\theta}<p_2\leq x^{\theta} \log x \\ p_2 \divides p_1 - 1 \\ p_2 \ndivides e_2(p_1)  }} \log p_2
+
\sum_{p_1 \leq x} \sum_{\substack{ x^{\theta}\log x <p_2\leq x \\ p_2 \divides p_1 - 1 \\ p_2 \ndivides e_2(p_1)  }} \log p_2.
\end{align}
In the first sum on the right of \eqref{3.7.1} we apply the Brun-Titchmarsh inequality \eqref{Brun-T} and \eq{log p} to deduce 
\begin{eqnarray}
\label{3.7.2}
\notag
\sum_{p_1 \leq x} \sum_{\substack{ x^{\theta}< p_2 \leq x^{\theta} \log x \\ p_2 \divides p_1 - 1 \\ p_2 \ndivides e_2(p_1)  }} \log p_2
&\leq&
\sum_{p_1 \leq x} \sum_{\substack{ x^{\theta}<p_2\leq x^{\theta} \log x \\ p_2 \divides p_1 - 1   }} \log p_2
\quad =
\sum_{ x^{\theta} < p \leq x^{\theta} \log x  } (\log p) \pi(x,p,1) \\
\notag
&<&
\sum_{ x^{\theta} < p \leq x^{\theta} \log x  } \frac{2x \log p}{(p-1)\log(x/p)}
\leq
\frac{2x}{\log(x^{1-\theta} / \log x)} \sum_{ x^{\theta} < p \leq x^{\theta} \log x  } \frac{\log p}{p-1} \\
\notag
&\leq&
\frac{2x}{\log(x^{1-\theta} / \log x)} \left( \log \log x + \sum_{p} \frac{\log p}{p(p-1)} 
+ \frac{0.2}{\log(x^{\theta} \log x)}  \right. \\
\notag
&& \hspace{4 cm} \left.
+ \frac{0.2}{\log^2(x^{\theta} \log x)}
+ \frac{0.2}{\theta \log x} + \frac{0.2}{\theta^2 \log^2 x} \right) \\
&=& \frac{2}{1-\theta} \frac{x \log \log x}{\log x} + f_1(\theta, x)
\end{eqnarray}
whenever $x^{\theta}\log x \geq 2974$.  

Now we deal with the second sum on the right of \eq{3.7.1}.  Let 
$$
M_2^{\theta}(x) =\sum_{p_1 \leq x} \sum_{\substack{ x^{\theta}\log x < p_2 \leq x \\ p_2 \divides p_1 - 1 \\ p_2 \ndivides e_2(p_1)  }} 1.
$$
For each pair of primes $p_1,p_2$ counted in $M_2^{\theta}(x)$, 
$$
2^{(p_1-1)/p_2} \equiv 1 \pmod{p_1}.
$$
So
$$
2^{M_2^{\theta}(x)} \leq \prod_{m \leq x^{1-\theta} / \log x} (2^m - 1)
$$
and therefore 
$$
M_2^{\theta}(x) \leq  \sum_{m \leq x^{1-\theta} / \log x} m \leq  \frac{x^{1-\theta}}{2\log x} \rbr{\frac{x^{1-\theta}}{\log x} + 1}.
$$
Thus
\begin{align} \label{3.7.3}
\sum_{p_1 \leq x} \sum_{\substack{ x^{\theta}\log x < p_2 \leq x \\ p_2 \divides p_1 - 1 \\ p_2 \ndivides e_2(p_1)  }} \log p_2 
\leq M_2^{\theta}(x) \log x
\leq \frac{1}{2 } \rbr{\frac{x^{2(1-\theta)}}{\log x} + x^{1-\theta}}=f_2(x, \theta).
\end{align}
Let $C_3(\theta, \epsilon)$ be the smallest $x$ such that
\begin{equation}
\label{f1f2}
x^\theta \log x \geq 2974   
\qquad \text{and} \qquad
f_1(\theta, x)+f_2(\theta, x) 
\leq 
\epsilon \frac{x\log\log{x}}{\log{x}},
\end{equation}
where $f_1(\theta, x)$ and $f_2(\theta, x)$ are defined in \eq{3.7.2} and \eq{3.7.3}.
Next by employing \eq{3.7.2} and \eq{3.7.3} together with \eq{f1f2} in \eq{3.7.1}  and applying the resulting inequality in \eq{3.7.0} we deduce that 
$$
\sum_{p_1 \leq x} \sum_{\substack{ x^{\theta}<p_2\leq x \\ p_2 \divides e_2(p_1)  }} \log p_2 
> 
\delta(\theta) x - (26+\frac{2}{1-\theta}+2\epsilon)\frac{x \log \log x}{\log{x}} 
$$
for $x \geq \max \{ C_0(\theta,\epsilon), C_3(\epsilon) \}$. Upon observing that
$$
\mathop{\sum_{p_1 \leq x}}_{e_2(p_1) > x^{\theta}} \log x 
> 
\sum_{p_1 \leq x} \mathop{ \sum_{\substack{x^{\theta} <p_2\leq x\\ p_2 \mid e_2(p_1 )}} } \log p_2,
$$
the result follows immediately.
\end{proof}

\section{Proof of Theorem \ref{TMVT}}
Fix arbitrary real numbers $Q > 0$ and $x \geq 4$.
In this section, we shall establish \eq{MVT}, which is the key inequality used in the proof of Theorem \ref{EBVT}.  
We closely follow Vaughan's proof as described in \cite[Chapter 28]{D}, and in each step will make the constants explicit.  

The main tool in the proof of \eq{MVT} is the large sieve inequality
\begin{align} \label{LSI}
\sum_{q \leq Q} \frac{q}{\phi(q)} \mathop{{\sum}^{\ast}}_{\chi \hspace{0.15 em} (q)} \abs{ \sum_{m = m_0 + 1}^{m_0 + M} a_m \chi(m) }^2 
\leq (M + Q^2) \sum_{m = m_0 + 1}^{m_0 + M} \abs{a_m}^2.
\end{align}
Here the $a_m$'s are arbitrary complex numbers.
Note that (\ref{LSI}) is given in \cite[p. 561, last line]{Mo}.

We will use the following consequence of \eqref{LSI}. 
\begin{lemma}\label{LSLeq inequality}
Suppose that $a_{m_0},\cdots, a_M$ and $b_{n_0}, \cdots, b_N$ are complex numbers. Then
\begin{gather} \label{LSLeq}
\notag
\sum_{q \leq Q} \frac{q}{\phi(q)} \mathop{{\sum}^{\ast}}_{\chi \hspace{0.15 em} (q)} \max_{y} \abs{ \mathop{ \sum_{m = m_0}^{M} \sum_{n = n_0}^{N} }_{mn \leq y} a_m b_n \chi(mn) }  \\
\leq 
c_3 (M^{\prime} + Q^2)^{\frac{1}{2}} (N^{\prime} + Q^2)^{\frac{1}{2}} \rbr{\sum_{m = m_0}^{M} \abs{a_m}^2 }^{\frac{1}{2}} \rbr{\sum_{n = n_0}^{N} \abs{b_n}^2 }^{\frac{1}{2}} \log(2MN),
\end{gather}
where 
$$
c_3 = \frac{2}{\pi} \left( \frac{2 + \log( \log(2) /\log(4/3))}{\log 2} \right) = 2.64456\ldots,
$$
$M^{\prime} = M - m_0 + 1$ and $N^{\prime} = N - n_0 + 1$ are the number of terms in the sums over $m$ and $n$, respectively, and the maximum is taken over all real numbers $y$.
\end{lemma}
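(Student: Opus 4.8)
The plan is to reduce the bilinear sum with the $mn \le y$ constraint to a sum of dyadic pieces on which the constraint disappears, then apply Cauchy--Schwarz in the character variable followed by two applications of the large sieve \eqref{LSI}. First I would remove the cutoff $mn \le y$ using a standard Fourier/Perron-type device: for fixed $y$ one writes the indicator of $mn \le y$ (equivalently $\log(mn) \le \log y$) as an integral of $e^{it\log(mn)}/(\cdots)$ against a suitable kernel, so that the inner sum factors as $\left(\sum_m a_m m^{it}\chi(m)\right)\left(\sum_n b_n n^{it}\chi(n)\right)$. The point of this step is that the coefficients $a_m m^{it}$ and $b_n n^{it}$ again have the same absolute values as $a_m, b_n$, so the $L^2$ norms appearing on the right of \eqref{LSI} are unchanged, and the $m^{it}$-twisted sequences are still legitimate inputs to the large sieve. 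The price is a factor coming from the $L^1$ mass of the kernel, and taking the maximum over $y$ costs a supremum over $t$ inside the integral which one pulls out crudely; managing this cleanly while keeping the constant small is where the precise shape of $c_3$ (the $\log 2$, $\log(4/3)$ terms) will come from.

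After that reduction, for each relevant value of the parameter $t$ I would apply Cauchy--Schwarz to the sum over $q$ and over primitive $\chi$, splitting
$\left| \sum_m a_m m^{it}\chi(m)\right| \left|\sum_n b_n n^{it}\chi(n)\right|$
into the product of the two squared sums, and bound each resulting factor by \eqref{LSI}: the $m$-sum is bounded by $(M'+Q^2)^{1/2}\left(\sum_m |a_m|^2\right)^{1/2}$ and the $n$-sum by $(N'+Q^2)^{1/2}\left(\sum_n |b_n|^2\right)^{1/2}$, since the $q/\phi(q)$ weight matches \eqref{LSI} exactly. This produces precisely the four factors on the right-hand side of \eqref{LSLeq}, leaving only the kernel-mass factor and the $\log(2MN)$ to be accounted for.

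The $\log(2MN)$ and the absolute constant $c_3$ will emerge from bounding the $L^1$ mass of the Fourier kernel truncated to the range of relevant $t$ (of size $\ll \log(2MN)$), optimized against the trivial bound on the exponential sum; the constant $\frac{2}{\pi}\left(\frac{2+\log(\log 2/\log(4/3))}{\log 2}\right)$ strongly suggests a Beurling--Selberg extremal majorant/minorant or a Fejér-kernel smoothing of the sharp cutoff, with the $\log 2$'s coming from a dyadic choice in the smoothing parameter. I expect the main obstacle to be exactly this bookkeeping: carrying the smoothed cutoff and the supremum over $y$ through the large sieve without inflating the constant, and in particular justifying that the twisted coefficients $a_m m^{it}$ may be fed to \eqref{LSI} with the $\max_y$ handled outside. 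The algebraic manipulations (Cauchy--Schwarz, two invocations of the large sieve) are routine once the cutoff is dealt with.
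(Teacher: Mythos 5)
Your proposal follows essentially the same route as the paper: detect the cutoff $mn\le y$ (with $y$ restricted to half-integers) via the truncated Perron-type integral $\int_{-T}^{T}e^{-it\log(mn)}\frac{\sin(t\log y)}{t}\,dt$, factor the resulting sums, apply Cauchy--Schwarz in $q,\chi$ followed by two invocations of \eqref{LSI}, and obtain $\log(2MN)$ and $c_3$ from the kernel's $L^1$ mass optimized in $T$ (the paper takes $T=(MN)^{3/2}/\log(4/3)$, and the $\log(4/3)$ comes from $|\log(mn/y)|\ge \log(4/3)/(MN)$ for half-integer $y$, not from a Beurling--Selberg construction). The plan is correct and matches the paper's proof.
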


\begin{proof}
Since the product $mn$ will be a positive integer not exceeding $MN$, we may assume that $y$ is of the form $y = k + \frac{1}{2}$, where $k \in \cbr{1,\ldots,MN}$.  By contour integration one can show that if $\alpha \geq 0$, $\beta > 0$, and $T > 0$, then 
$$
\int_{-T}^{T} e^{- i t \alpha} \frac{\sin(t\beta)}{t} dt 
= 
\left\{ 
     \begin{array}{ll}
          \pi + O^*(2T^{-1}|\alpha - \beta|^{-1}) & \text{if $\alpha < \beta$}, \\
          O^*(2T^{-1}|\alpha - \beta|^{-1}) & \text{if $\alpha > \beta$},
     \end{array}
  \right.
$$
where $O^*$ means that the implied constants are less than or equal to 1.
By setting $\alpha = \log (mn)$, $\beta = \log y$, and summing against $a_m b_n \chi(mn)$, we find
\begin{align} \label{LSLPeq1}
\notag
\pi \mathop{ \sum_{m = m_0}^{M} \sum_{n = n_0}^{N} }_{mn \leq y} a_m b_n \chi(mn) 
&= \sum_{m = m_0}^{M} \sum_{n = n_0}^{N} a_m b_n \chi(mn) \int_{-T}^{T} (mn)^{-it} \frac{\sin(t \log y)}{t} dt \\
&\qquad + \sum_{m = m_0}^{M} \sum_{n = n_0}^{N} a_m b_n \chi(mn) O^{*}(2T^{-1}|\log(mn / y)|^{-1}).
\end{align}
Because $y$ is a half integer $\leq MN + \frac{1}{2}$, we have
$$
\abs{\log\rbr{\frac{mn}{y}}} \geq \frac{\log(4/3)}{MN} \qquad \text{ and } \qquad \abs{\sin(t \log y)} \leq \min(1,\abs{t}\log(2MN)).
$$
Therefore the modulus of the left-hand side of (\ref{LSLPeq1}) is 
$$
\leq \int_{-T}^{T} \abs{ \sum_{m=m_0}^{M} \sum_{n=n_0}^{N} \frac{a_m}{m^{it}} \frac{b_n}{n^{it}} \chi(mn) }
\min \rbr{ \frac{1}{\abs{t}}, \log( 2MN ) } dt \\
+
\frac{2}{ \log \left( \frac{4}{3} \right) } \frac{MN}{T} \sum_{m=m_0}^{M} \sum_{n=n_0}^{N} \abs{a_m b_n},
$$
and so the left-hand side of (\ref{LSLeq}) is
\begin{align} \label{LSLPeq2}
&\leq \frac{1}{\pi} \int_{-T}^{T} { \sum_{q \leq Q} \frac{q}{\phi(q)} \mathop{{\sum}^{\ast}}_{\chi \hspace{0.15 em} (q)} \abs{ \sum_{m=m_0}^{M} \sum_{n=n_0}^{N} \frac{a_m}{m^{it}} \frac{b_n}{n^{it}} \chi(mn) } }
\min \rbr{ \frac{1}{\abs{t}}, \log( 2MN ) } dt \notag\\
&\qquad +
\frac{2}{ \pi \log \left( \frac{4}{3} \right) } \frac{M N Q^2}{T} \sum_{m=m_0}^{M} \sum_{n=n_0}^{N} \abs{a_m b_n}.
\end{align}
Applying first Cauchy's inequality and then (\ref{LSI}), the sum under the integral in \eqref{LSLPeq2} is seen to be
\begin{align*}
&\leq
\rbr{ \sum_{q \leq Q} \frac{q}{\phi(q)} \mathop{{\sum}^{\ast}}_{\chi \hspace{0.15 em} (q)} \abs{ \sum_{m=m_0}^{M}  \frac{a_m}{m^{it}} \chi(m) }^2   }^{\frac{1}{2}}
\rbr{ \sum_{q \leq Q} \frac{q}{\phi(q)} \mathop{{\sum}^{\ast}}_{\chi \hspace{0.15 em} (q)} \abs{ \sum_{n=n_0}^{N}  \frac{b_n}{n^{it}} \chi(n) }^2   }^{\frac{1}{2}} \\
&\leq
(M^{\prime}+ Q^2)^{\frac{1}{2}} (N^{\prime}+ Q^2)^{\frac{1}{2}} 
\rbr{\sum_{m = m_0}^{M} \abs{a_m}^2 }^{\frac{1}{2}} \rbr{\sum_{n = n_0}^{N} \abs{b_n}^2 }^{\frac{1}{2}}.
\end{align*}
Thus, by applying Cauchy's inequality in the second sum in (\ref{LSLPeq2}), the right-hand side of (\ref{LSLPeq2}) is
\begin{align*}
&\leq \frac{1}{\pi} (M^{\prime}+ Q^2)^{\frac{1}{2}} (N^{\prime}+ Q^2)^{\frac{1}{2}} 
\rbr{\sum_{m = m_0}^{M} \abs{a_m}^2 }^{\frac{1}{2}} \rbr{\sum_{n = n_0}^{N} \abs{b_n}^2 }^{\frac{1}{2}} \int_{-T}^{T} \min \rbr{ \frac{1}{\abs{t}}, \log( 2MN ) } dt \\
&\qquad + 
\frac{2}{ \pi \log \left( \frac{4}{3} \right) } \frac{M^{\frac{3}{2}} N^{\frac{3}{2}} Q^2}{T} \rbr{ \sum_{m=m_0}^{M} \abs{a_m}^2 }^{\frac{1}{2}} \rbr{ \sum_{n=n_0}^{N}  \abs{b_n}^2 }^{\frac{1}{2}}.
\end{align*}
The integral in the above inequality is $2\log(eT\log(2MN))$.  So, since $Q^2 \leq (M^{\prime}+ Q^2)^{\frac{1}{2}} (N^{\prime}+ Q^2)^{\frac{1}{2}}$, to complete the proof it will be enough to show that
\begin{align} \label{c3.1}
\frac{2}{\pi}
\rbr{ \log(eT\log(2MN)) + \frac{(MN)^{3/2}}{\log(4/3)T} }
\frac{1}{\log(2MN)}
\leq c_3
\end{align}
for some choice of $T$.
A bit of calculus reveals that the left-hand side of \eq{c3.1} is minimized by choosing $T = (MN)^{3/2}/\log(4/3)$.  With this choice of $T$ the left-hand side is decreasing in $MN$, and so (since $MN \geq 1$) its maximum value is seen to be $c_3$.
\end{proof}
We are almost at the heart of the proof of (\ref{MVT}), but first we reduce to the case $2 \leq Q \leq x^{1/2}$.  If $Q < 1$ the sum on 
the left-hand side of (\ref{MVT}) is empty and there is nothing to prove.  If $1 \leq Q < 2$ then only the $q = 1$ term appears and we have
\begin{equation}
\label{Chebyshev}
\sum_{q \leq Q} \frac{q}{\phi(q)} \mathop{{\sum}^{\ast}}_{\chi \hspace{0.15 em} (q)} \max_{y \leq x} \abs{ \psi(y,\chi) } = \max_{y \leq x} \abs{ \sum_{n \leq y} \Lambda(n)  } = \psi(x) \leq A_0 x
\end{equation}
which is better than (\ref{MVT}).  Note the last inequality is the Chebyshev estimate (see \cite[Theorem 12]{RS1}) where $A_0= \max (\psi(x)/x)= \psi(113)/113$.  
In the case $Q > x^{1/2}$, (\ref{MVT}) follows from (\ref{LSLeq}) with $M = m_0 = n_0 = 1$, $N = \lfloor x \rfloor$, $a_m = 1$, $b_n = \Lambda(n)$, and the estimate
$$
\sum_{n \leq x} \Lambda(n)^2 \leq \psi(x) \log x \leq A_0 x \log x.
$$

From now on we assume $2 \leq Q \leq x^{1/2}$.  We will need the following decomposition of $\Lambda(n)$ due to Vaughan (see \cite{V4} or \cite[p. 139]{D}). Let $U$ and $V$ be arbitrary real numbers $\geq 1$. We have
$$
\Lambda(n) = \lambda_1(n) + \lambda_2(n) + \lambda_3(n) + \lambda_4(n),
$$
where 
\begin{eqnarray*}
\lambda_{1}(n) &=&
\left\{
\begin{array}{ll}
\Lambda(n) & \mbox{if \, $n \leq U$}, \\
0 & \mbox{if \, $n > U$},
\end{array}
\right.  \\
\lambda_{2}(n) &=& \mathop{ \sum_{hd = n} }_{d \leq V} \mu(d) \log h, \\
\lambda_{3}(n) &=& - \mathop{\mathop{\sum_{mdr = n}}_{m \leq U}}_{d \leq V} \Lambda(m) \mu(d), 
\end{eqnarray*}
and
\begin{eqnarray*}
\lambda_{4}(n) &=& - \mathop{ \mathop{ \sum_{mk = n} }_{m > U} }_{k > V} \Lambda(m)  \mathop{ \sum_{d \mid k} }_{d \leq V} \mu(d).
\end{eqnarray*}
Assume $y \leq x$, $q \leq Q$, and $\chi$ is a character mod $q$.  We use the above decomposition to write
$$
\psi(y,\chi) = S_1 + S_2 + S_3 + S_4,
$$
where
$$
S_i = \sum_{n \leq y} \lambda_i(n) \chi(n).
$$
We take $U$ and $V$ to be functions of $x$ and $Q$ to be specified later.  
We will treat each $S_i$ separately.  It immediately follows that
\begin{align*} 
\abs{S_1} \leq \sum_{n \leq U} \Lambda(n) \leq A_0U
\end{align*}
by the Chebyshev estimate \eqref{Chebyshev}.  Then
\begin{align} \label{S1}
\sum_{q \leq Q} \frac{q}{\phi(q)} \mathop{{\sum}^{\ast}}_{\chi \hspace{0.15 em} (q)} \max_{y \leq x} \abs{ S_1 }
\leq A_0U \sum_{q \leq Q} \frac{q}{\phi(q)} \mathop{{\sum}^{\ast}}_{\chi \hspace{0.15 em} (q)} 1 
\leq 
A_0UQ^2.
\end{align}

Next we write
$$
S_2 
= \sum_{n \leq y} \chi(n) \mathop{ \sum_{hd = n} }_{d \leq V} \mu(d) \log h 
= \mathop{ \sum_{hd \leq y} }_{d \leq V} \mu(d) \chi(hd) \log h
= \sum_{d \leq V} \mu(d) \chi(d) \sum_{h \leq y/d} \chi(h) \log h.
$$
We have
\begin{align*}
S_2 &= \sum_{d \leq V} \mu(d) \chi(d) \sum_{h \leq y/d} \chi (h) \int_{1}^{h} \frac{dw}{w} 
= \sum_{d \leq V} \mu(d) \chi(d) \int_{1}^{y/d} \sum_{w \leq h \leq y/d} \chi(h) \frac{dw}{w} \\
&= \int_{1}^{y} \sum_{d \leq \min \rbr{ V,y/w } } \mu(d) \chi(d) \sum_{w \leq h \leq y/d} \chi(h) \frac{dw}{w} = \int_{1}^{y} \rbr{  \sum_{d \leq V } \mu(d) \chi(d) \sum_{w \leq h \leq y/d} \chi(h) } \frac{dw}{w}.
\end{align*}
In the last expression we dropped the condition $d \leq y/w$ because the sum over $h$ is empty when $d > y/w$.  Now we see that
\begin{align*}
\abs{S_2} 
\leq \int_{1}^{y} \rbr{\sum_{d \leq V} \abs{ \sum_{w \leq h \leq y/d} \chi(h) } } \frac{dw}{w} 
\leq  \rbr{\sum_{d \leq V} \max_{1\leq w \leq y} \abs{ \sum_{w \leq h \leq y/d} \chi(h) } } \int_{1}^{y} \frac{dw}{w}
\end{align*}
and so
\begin{align} \label{S2step1}
\abs{S_2} \leq (\log y) \sum_{d \leq V} \max_{1\leq w \leq y} \abs{ \sum_{w \leq h \leq y/d} \chi(h) }.
\end{align}

If $q = 1$ then the only character $\chi$ mod $q$ is the trivial character, and so
$$
\abs{S_2} \leq (\log y) \sum_{d \leq V} \max_{1 \leq w \leq y} \abs{ \sum_{w \leq h \leq y/d} 1 } = y (\log y) \sum_{d \leq V} \frac{1}{d} <  y (\log y) (\log eV) < x (\log xV)^2.
$$

If $q > 1$ and $\chi$ is a primitive character mod $q$, the Polya-Vinogradov inequality
$$
\abs{\sum_{a \leq n \leq b} \chi(n) } < q^{\frac{1}{2}} \log q
$$
holds for all $a$ and $b$  \cite[pp. 135-136]{D}; hence
\begin{equation}
\label{PV}
\abs{S_2} < (\log y) \sum_{d \leq V} \max_{1 \leq w \leq y} \rbr{q^{\frac{1}{2}}\log q} 
\leq q^{\frac{1}{2}} V (\log y) (\log q) <  q^{\frac{1}{2}} V (\log xV)^{2}.
\end{equation}
These two estimates for $\abs{S_2}$ imply
\begin{align} \label{S2}
\notag
\sum_{q \leq Q} \frac{q}{\phi(q)} \mathop{{\sum}^{\ast}}_{\chi \hspace{0.15 em} (q)} \max_{y \leq x} \abs{ S_2 }
&= 
\mathop{{\sum}^{\ast}}_{\chi \hspace{0.15 em} (1)} \max_{y \leq x} \abs{ S_2 } 
+ \sum_{1 < q \leq Q} \frac{q}{\phi(q)} \mathop{{\sum}^{\ast}}_{\chi \hspace{0.15 em} (q)} \max_{y \leq x} \abs{ S_2 } \\
\notag
&<
x (\log xV)^2 + V (\log xV)^2 \sum_{1 < q \leq Q} \frac{q^{\frac{3}{2}}}{\phi(q)} \mathop{{\sum}^{\ast}}_{\chi \hspace{0.15 em} (q)} 1 \\
&<
( x + Q^{\frac{5}{2}}V ) (\log xV)^2.
\end{align}

Next we consider $S_3$.  We write
\begin{align*}
S_3 
&
= - \mathop{\mathop{\sum_{mdr \leq y}}_{m \leq U}}_{d \leq V} \Lambda(m) \mu(d) \chi(mdr) 
= - \sum_{tr \leq y}  \mathop{ \mathop{ \sum_{md = t} }_{m \leq U} }_{d \leq V} \Lambda(m) \mu(d) \chi(tr).
\end{align*}
Consider the last expression.  Since the innermost sum is empty when $t > UV$, we are free to introduce the condition $t \leq UV$ on the sum over $t$.  
Hence
$$
S_3 = - \sum_{t \leq UV} \sum_{r \leq y/t}  \mathop{ \mathop{ \sum_{md = t} }_{m \leq U} }_{d \leq V} \Lambda(m) \mu(d) \chi(t) \chi(r).
$$
Now we split the summation over small and large $t$ by writing
$$
- S_3 = S_3^{\prime} + S_3^{\prime \prime} = \sum_{t \leq U} + \sum_{U < t \leq UV}.
$$
Since
\begin{align} \label{at}
 |{\mathop{ \mathop{ \sum_{md = t} }_{m \leq U} }_{d \leq V} \Lambda(m) \mu(d) \chi(t)}  |
\leq \sum_{m \divides t} \Lambda(m)
= \log t,
\end{align}
we have
\begin{align*} 
\abs{S_{3}^{\prime}} \leq (\log U) \sum_{t \leq U} \left |{\sum_{r \leq y/t} \chi(r)} \right |.
\end{align*}
Note the similarity to the inequality (\ref{S2step1}) which we obtained for $S_2$.
Indeed, by proceeding 
as we did with $S_2$, we obtain
\begin{align} \label{S3prime}
\sum_{q \leq Q} \frac{q}{\phi(q)} \mathop{{\sum}^{\ast}}_{\chi \hspace{0.15 em} (q)} \max_{y \leq x} \abs{ S_{3}^{\prime} }
<
( x + Q^{\frac{5}{2}}U )  (\log xU)^2.
\end{align}

We deal now with $S_{3}^{\prime \prime}$ by breaking the sum for $U < t \leq UV$ into sums over dyadic sub-intervals. We have
$$
S_{3}^{\prime \prime}
=  \mathop{\sum_{M = 2^{\alpha}}}_{\frac{1}{2}U < M \leq UV} \mathop{\sum_{U < t \leq UV}}_{M < t \leq 2M} 
\sum_{r \leq y/t}  \mathop{ \mathop{ \sum_{md = t} }_{m \leq U} }_{d \leq V} \Lambda(m) \mu(d)  \chi(tr).
$$
Since $M < t$ and $y \leq x$, we have $r \leq x/M$ in the sum over $r$.  Then 
\begin{align*}
\sum_{q \leq Q} \frac{q}{\phi(q)} \mathop{{\sum}^{\ast}}_{\chi \hspace{0.15 em} (q)} \max_{y \leq x} \abs{S_{3}^{\prime \prime}}
&\leq
\mathop{\sum_{M = 2^{\alpha}}}_{\frac{1}{2}U < M \leq UV} \sum_{q \leq Q} \frac{q}{\phi(q)} \mathop{{\sum}^{\ast}}_{\chi \hspace{0.15 em} (q)} \max_{y \leq x} \abs{ \sum_{\substack{U < t \leq UV \\ M < t \leq 2M}}
\sum_{\substack{ r \leq x/M \\ rt \leq y}}   a_t b_r \chi(tr) },
\end{align*}
where we have put
$$
a_t = \mathop{ \mathop{ \sum_{md = t} }_{m \leq U} }_{d \leq V} \Lambda(m) \mu(d) \qquad \text{and} \qquad b_r = 1.
$$
%
Then (\ref{LSLeq}) implies 
\begin{align*}
\sum_{q \leq Q} \frac{q}{\phi(q)} \mathop{{\sum}^{\ast}}_{\chi \hspace{0.15 em} (q)} \max_{y \leq x} \abs{S_{3}^{\prime \prime}}
&\leq
\mathop{\sum_{M = 2^{\alpha}}}_{\frac{1}{2}U < M \leq UV} 
c_3 (T^{\prime} + Q^2)^{\frac{1}{2}} (R^{\prime} + Q^2)^{\frac{1}{2}} 
\rbr{ \mathop{  \sum_{U < t \leq UV} }_{M < t \leq 2M} \abs{a_t}^{2} }^{\frac{1}{2}} \\
& \hspace{3.5 cm} \times \rbr{ \sum_{r \leq x/M} \abs{b_r}^{2} }^{\frac{1}{2}} \log \rbr{  2 \min(UV,2M)\frac{x}{M} },
\end{align*} 
where $T^{\prime}$ and $R^{\prime}$ are the number of terms in the sums over $t$ and $r$, respectively.
Clearly, $T^{\prime} \leq M$, $R^{\prime} \leq x / M$, and $\log(2\min(UV,2M)x/M) \leq \log(4x)$. By (\ref{at}), $\abs{a_t} \leq \log t$. 
Putting these facts together we see that
\begin{align*}
\sum_{q \leq Q} \frac{q}{\phi(q)} \mathop{{\sum}^{\ast}}_{\chi \hspace{0.15 em} (q)} \max_{y \leq x} \abs{S_{3}^{\prime \prime}}
&\leq
 \mathop{\sum_{M = 2^{\alpha}}}_{\frac{1}{2}U < M \leq UV}
c_3 \rbr{M + Q^2}^{\frac{1}{2}}\rbr{\frac{x}{M} + Q^2}^{\frac{1}{2}} 
M^{\frac{1}{2}} (\log 2M) \rbr{\frac{x}{M}}^{\frac{1}{2}} (\log 4x) \\
&\leq
\mathop{\sum_{M = 2^{\alpha}}}_{\frac{1}{2}U < M \leq UV}
c_3 \rbr{x + Q x^{\frac{1}{2}} M^{\frac{1}{2}} + QxM^{-\frac{1}{2}} + Q^2 x^{\frac{1}{2}} }(\log 2M)(\log 4x).
\end{align*}
Using that
\begin{align*} 
\mathop{\sum_{M = 2^{\alpha}}}_{\frac{1}{2}U < M \leq UV} 1
\leq
\frac{\log (2UV)}{\log 2},
\end{align*}
we conclude
\begin{align} \label{S3primeprime}
\sum_{q \leq Q} \frac{q}{\phi(q)} \mathop{{\sum}^{\ast}}_{\chi \hspace{0.15 em} (q)} \max_{y \leq x} \abs{S_{3}^{\prime \prime}}
&< \frac{c_3}{\log 2} \rbr{x + Q x^{\frac{1}{2}} U^{\frac{1}{2}} V^{\frac{1}{2}} +  2^{\frac{1}{2}} Q x U^{-\frac{1}{2}}  + Q^2 x^{\frac{1}{2}} } 
(\log 2UV)^2 (\log 4x).
\end{align}

Next we treat $S_4$.  We have
\begin{align*}
S_4 
=
- \sum_{\substack{mk \leq y \\ m > U \\ k > V}} \Lambda(m)  \left( \sum_{\substack{d \mid k \\ d \leq V}} \mu(d) \right)  \chi(mk) 
=
- \sum_{U < m \leq x/V} \sum_{\substack{V < k \leq x/M \\ mk \leq y}} \Lambda(m) \left( \sum_{\substack{d \mid k \\ d \leq V}} \mu(d) \right) \chi(mk).
\end{align*}
We employ the same technique we used with $S_{3}^{\prime \prime}$. Writing $S_4$ as a dyadic sum we have
$$
S_4 = - \sum_{\substack{M = 2^{\alpha} \\ \frac{1}{2}U < M \leq x/V}} \sum_{\substack{U < m \leq x/V \\ M < m \leq 2M}} \sum_{\substack{V < k \leq x/M \\ mk \leq y}} \Lambda(m) \left( \sum_{\substack{d \mid k \\ d \leq V}} \mu(d) \right) \chi(mk).
$$
By the triangle inequality
\begin{align*}
\sum_{q \leq Q} \frac{q}{\phi(q)} \mathop{{\sum}^{\ast}}_{\chi \hspace{0.15 em} (q)} \max_{y \leq x} \abs{ S_4 } 
&\leq
\sum_{\substack{M = 2^{\alpha} \\ \frac{1}{2}U < M \leq x/V}}
\sum_{q \leq Q} \frac{q}{\phi(q)} \mathop{{\sum}^{\ast}}_{\chi \hspace{0.15 em} (q)} \max_{y \leq x}
 \abs{ \sum_{\substack{U < m \leq x/V \\ M < m \leq 2M}}  
 \sum_{\substack{V < k \leq x/M \\ mk \leq y}} a_m b_k \chi(mk) },
\end{align*}
where 
\begin{align*}
a_m = \Lambda(m) \qquad \text{and} \qquad b_k = \mathop{ \sum_{d \mid k} }_{d \leq V} \mu(d).
\end{align*}
Then applying (\ref{LSLeq}) gives
\begin{align*}
\sum_{q \leq Q} \frac{q}{\phi(q)} \mathop{{\sum}^{\ast}}_{\chi \hspace{0.15 em} (q)} \max_{y \leq x} \abs{S_{4}}
&\leq
\mathop{\sum_{M = 2^{\alpha}}}_{\frac{1}{2}U < M \leq x/V } 
c_3 (M^{\prime} + Q^2)^{\frac{1}{2}} (K^{\prime} + Q^2)^{\frac{1}{2}} 
\rbr{ \sum_{\substack{U < m \leq x/V \\ M < m \leq 2M}} \abs{a_m}^{2} }^{\frac{1}{2}}  \\
& \hspace{3 cm} \times \rbr{ \sum_{\substack{V < k \leq x/M \\ mk\leq y}} \abs{b_k}^{2} }^{\frac{1}{2}} \log \rbr{  2 \min \rbr{\frac{x}{V},2M} \frac{x}{M} },
\end{align*}
where $M^{\prime}$ and $K^{\prime}$ denote the number of terms in the sums over $m$ and $k$, respectively.  Evidently, $M^{\prime} \leq M$, $K^{\prime} \leq x/M$, and $\log( 2 \min \rbr{x/V,2M} x/M ) \leq \log(4x)$. Chebyshev's estimate \eqref{Chebyshev} yields
$$
\mathop{\sum_{U < m \leq x/V}}_{M < m \leq 2M} |a_m|^2 \leq \sum_{m \leq 2M} \Lambda(m)^2 \leq \psi(2M) \log 2M \leq 2A_0M\log 2M.
$$
By  Lemma \ref{BT}(g),
\begin{equation}
\label{norm2}
\sum_{V < k \leq x/M} |b_k|^2 \leq \frac{4}{3}\frac{x}{M} \rbr{\log e^3 V}^2.
\end{equation}
By combining these estimates we find that $\sum_{q \leq Q} \frac{q}{\phi(q)} \mathop{{\sum}^{\ast}}_{\chi \hspace{0.15 em} (q)} \max_{y \leq x} \abs{S_4}$ is 
\begin{align*}
&\leq
\mathop{\sum_{M = 2^{\alpha}}}_{\frac{1}{2}U < M \leq x/V}
c_3 \rbr{M + Q^2}^{\frac{1}{2}}\rbr{\frac{x}{M} + Q^2}^{\frac{1}{2}} 
 (2A_0)^{\frac{1}{2}} M^{\frac{1}{2}} (\log 2M)^{\frac{1}{2}} 
 \rbr{ \frac{x}{M} }^{\frac{1}{2}} \rbr{\log e^3 V} (\log 4x) \\
&<
\frac{2^{\frac{3}{2}}A_0^{\frac{1}{2}}c_3 }{3^{\frac{1}{2}}}\rbr{x + Q x V^{-\frac{1}{2}} + 2^{\frac{1}{2}} Q x U^{-\frac{1}{2}}  + Q^2 x^{\frac{1}{2}} } 
\rbr{\log \rbr{\frac{2x}{V}}}^{\frac{1}{2}} \rbr{\log {e^3 V}} (\log 4x) 
\mathop{\sum_{M = 2^{\alpha}}}_{\frac{1}{2}U < M \leq x/V} 1.
\end{align*}
Then since
\begin{align*} 
\mathop{\sum_{M = 2^{\alpha}}}_{\frac{1}{2}U < M \leq x/V} 1 
\leq
\frac{\log(2x/V)}{\log 2},
\end{align*}
we have
\begin{eqnarray}
\label{S4}
\notag
\sum_{q \leq Q} \frac{q}{\phi(q)} \mathop{{\sum}^{\ast}}_{\chi \hspace{0.15 em} (q)} \max_{y \leq x} \abs{S_4} 
&<& 
\frac{2^{\frac{3}{2}}A_0^{\frac{1}{2}}c_3 }{3^{\frac{1}{2}} \log 2}
\rbr{x + Q x V^{-\frac{1}{2}} + 2^{\frac{1}{2}} Q x U^{-\frac{1}{2}} + Q^2 x^{\frac{1}{2}} } \\
&&\qquad \times \rbr{\log \rbr{\frac{2x}{V}}}^{\frac{3}{2}} (\log e^3 V)  (\log 4x).
\end{eqnarray}
Since
$$
\psi(y,\chi) = S_1 + S_2 + S_{3}^{\prime} + S_{3}^{\prime \prime} + S_4,
$$
combining the estimates (\ref{S1}), (\ref{S2}), (\ref{S3prime}), (\ref{S3primeprime}), and (\ref{S4}) gives 
\begin{align*}
\sum_{q \leq Q} \frac{q}{\phi(q)} \mathop{{\sum}^{\ast}}_{\chi \hspace{0.15 em} (q)} \max_{y \leq x} \abs{ \psi(y,\chi) } 
< 
c_4 K L,
\end{align*}
where 
\begin{gather*}
c_4 = \max \rbr{ A_0, \frac{c_3}{\log 2}, \frac{2^{\frac{3}{2}}A_0^{\frac{1}{2}}c_3 }{3^{\frac{1}{2}} \log 2} } = \frac{2^{\frac{3}{2}}A_0^{\frac{1}{2}}c_3 }{3^{\frac{1}{2}} \log 2}, \\
K = 4x + 2Q^2 x^{\frac{1}{2}} + Q^2U + Q^{\frac{5}{2}}U + Q^{\frac{5}{2}}V  + 2^{\frac{3}{2}} Q x U^{-\frac{1}{2}} + Q x V^{-\frac{1}{2}} + Q x^{\frac{1}{2}} U^{\frac{1}{2}} V^{\frac{1}{2}}, \\
L = \max\cbr{
(\log xV)^2,
(\log xU)^2,
(\log 2UV)^2 (\log 4x),
\rbr{\log \rbr{\frac{2x}{V}}}^{\frac{3}{2}} \rbr{\log e^3 V} (\log 4x)
}.
\end{gather*}
At this point, we specify $U$ and $V$. If $x^{\frac{1}{3}} \leq Q \leq x^{\frac{1}{2}}$, we set $U = V = Q^{-1}x^{\frac{2}{3}}$; hence
\begin{align*}
K &= 4x + 2Q^2x^{\frac{1}{2}} + Qx^{\frac{2}{3}} + Q^{\frac{3}{2}}x^{\frac{2}{3}} + Q^{\frac{3}{2}}x^{\frac{2}{3}} + 2^{\frac{3}{2}} Q^{\frac{3}{2}}x^{\frac{2}{3}} + Q^{\frac{3}{2}}x^{\frac{2}{3}} + x^{\frac{7}{6}} \\
&\leq 4x + 2Q^2x^{\frac{1}{2}} + (3+2^{\frac{3}{2}})Q^{\frac{3}{2}}x^{\frac{2}{3}} + 2Qx^{\frac{5}{6}}
\end{align*}
and
\begin{align*}
L \leq 2\rbr{\frac{4}{3}}^{\frac{3}{2}} \rbr{\frac{1}{3} + \frac{3}{2\log2}} (\log x)^{\frac{7}{2}}
\end{align*}
If $Q \leq x^{\frac{1}{3}}$, we set $U = V = x^{\frac{1}{3}}$, which leads to
\begin{align*}
K &= 4x + 2Q^2x^{\frac{1}{2}} + Q^2x^{\frac{1}{3}} + Q^{\frac{5}{2}}x^{\frac{1}{3}} + Q^{\frac{5}{2}}x^{\frac{1}{3}} + 2^{\frac{3}{2}}Qx^{\frac{5}{6}} + Qx^{\frac{5}{6}} + Qx^{\frac{5}{6}} \\
&\leq 4x + 2Q^2x^{\frac{1}{2}} + 3Q^{\frac{3}{2}}x^{\frac{2}{3}} + (2+2^{\frac{3}{2}})Qx^{\frac{5}{6}}
\end{align*}
and 
\begin{align*}
L \leq 2\rbr{\frac{7}{6}}^{\frac{3}{2}} \rbr{\frac{1}{3} + \frac{3}{2\log2}} (\log x)^{\frac{7}{2}}.
\end{align*}
So for $Q$ in either range we have
\begin{gather*}
\sum_{q \leq Q} \frac{q}{\phi(q)} \mathop{{\sum}^{\ast}}_{\chi \hspace{0.15 em} (q)} \max_{y \leq x} \abs{ \psi(y,\chi) } \\
< 2\rbr{\frac{4}{3}}^{\frac{3}{2}} \rbr{\frac{1}{3} + \frac{3}{2\log2}} c_4
\rbr{4x + 2Q^2 x^{\frac{1}{2}} + 6Q^{\frac{3}{2}}x^{\frac{2}{3}} + 5Qx^{\frac{5}{6}}} 
(\log{x})^{\frac{7}{2}},
\end{gather*}
which is precisely (\ref{MVT}).

\section{Proof of Theorem \ref{EBVT}}

\begin{proof}
Let $y \geq 2$ and $(a,q)=1$.  By orthogonality of characters modulo $q$, we have
$$
\psi(y;q,a) = \frac{1}{\phi(q)} \sum_{\chi} \overline{\chi}(a) \psi(y,\chi).
$$
Put
$$
\psi^{\prime}(y,\chi) 
= 
\left\{
\begin{array}{ll}
\psi(y,\chi) & \text{if } \chi \not = \chi_0, \\
\psi(y,\chi) - \psi(y) & \text{if } \chi = \chi_0.
\end{array}
\right.
$$
Here $\chi_0$ denotes the principal character mod $q$. Then
\begin{align*}
\psi(y,q,a) - \frac{\psi(y)}{\phi(q)} = \frac{1}{\phi(q)} \sum_{\chi} \overline{\chi}(a) \psi^{\prime}(y,\chi).
\end{align*}
For a character $\chi$ (mod $q$), we let $\chi^{\ast}$ be the primitive character modulo $q^*$ inducing $\chi$.
We have
$$
\psi^{\prime}(y,\chi^{\ast}) - \psi^{\prime}(y,\chi) 
= \psi(y,\chi^{\ast}) - \psi(y,\chi)
= \sum_{p^k \leq y} (\log p) (\chi^{\ast}(p^k) - \chi(p^k)).
$$
If $p \ndivides q$ then $(p^k,q^{\ast}) = 1$, and hence $\chi^{\ast}(p^k) = \chi(p^k)$.  If $p \divides q$ then $\chi(p^k) = 0$.  Therefore 
$$
\abs{ \psi^{\prime}(y, \chi^{\ast}) - \psi^{\prime}(y,\chi) } 
\leq \mathop{ \sum_{p^k \leq y} }_{p \divides q} (\log p) \leq (\log y) \sum_{p \divides q} 1 \leq (\log qy)^2.
$$
It follows that
$$
\abs{\psi(y,q,a) - \frac{\psi(y)}{\phi(q)}} 
\leq \frac{1}{\phi(q)} \sum_{\chi} \abs{\psi^{\prime}(y,\chi)}
\leq (\log qy)^2 + \frac{1}{\phi(q)} \sum_{\chi} \abs{\psi^{\prime}(y,\chi^{\ast})}.
$$
From here we see that the left-hand side of \eq{EBVTpsi} is 
$$
\leq Q(\log Qx)^2 + \sum_{\substack{q \leq Q \\ \ell(q)>Q_1}}\frac{1}{\phi(q)} \sum_{\chi} \max_{2 \leq y \leq x} \abs{\psi^{\prime}(y,\chi^{\ast})}.
$$
The first term is evidently smaller than the expression on the right of \eq{EBVTpsi} without the constant $c_1$.  So we just need to show that the second term is smaller than 
$$
(c_1-1) \rbr{ 4\frac{x}{Q_1} + 4x^{\frac{1}{2}}Q + 18x^{\frac{2}{3}}Q^{\frac{1}{2}} + 5x^{\frac{5}{6}} \log \rbr{ \frac{eQ}{Q_1} } } \rbr{\log x}^{5}.
$$
Since a primitive character $\chi^{\ast}$ (mod $q^{*}$) induces characters to moduli which are multiples of~$q^{\ast}$ and since $\psi^{\prime}(y,\chi^{\ast}) = 0$ whenever $\chi$ is principal, we have
\begin{align*}
\sum_{\substack{q \leq Q \\ \ell(q)>Q_1}}\frac{1}{\phi(q)} \sum_{\hspace{-0 em}\chi \hspace{0.15 em} (q)} \max_{2 \leq y \leq x} \abs{\psi^{\prime}(y,\chi^{\ast})}
&= \sum_{\substack{q \leq Q \\ \ell(q)>Q_1 }} \frac{1}{\phi(q)} \sum_{\substack{q^{\ast} \divides q \\ q^{\ast}\neq 1}} \mathop{{\sum}^{\ast}}_{\hspace{-0 em}\chi \hspace{0.15 em} (q^{\ast})} \max_{2 \leq y \leq x} \abs{\psi^{\prime}(y,\chi)} \\
&= \sum_{\substack{kq^{\ast}\leq Q \\ {\ell(kq^{\ast})>Q_1, q^{\ast} \neq 1}} } \frac{1}{\phi(kq^{\ast})}  \mathop{{\sum}^{\ast}}_{\hspace{-0 em}\chi \hspace{0.15 em} (q^{\ast})} \max_{2 \leq y \leq x} \abs{\psi^{\prime}(y,\chi)} \\
&\leq \sum_{\substack{q^{\ast} \leq Q\\ \ell(q^{\ast})>Q_1}} \mathop{{\sum}^{\ast}}_{\hspace{-0 em}\chi \hspace{0.15 em} (q^{\ast})} \max_{2 \leq y \leq x} \abs{\psi^{\prime}(y,\chi)} \Comment{\rbr }  \sum_{k \leq \frac{Q}{ q^{*}} \Comment{\substack{ k \leq \frac{Q}{ q^{*}}\\\ell(k)>Q_1}}  } \frac{1}{\phi(kq^{\ast})} .
\end{align*}
We shall now observe that
$$
\sum_{k \leq x}\frac{1}{\phi(k)} \leq E_0 \log (ex)
$$
for $x>0$ (the proof is essentially Exercises 4.4.11, 4.4.12, and 4.4.13 in \cite{M}).  As $q^{\ast} \leq Q \leq x^{1/2}$ and $\phi(k)\phi(q^{\ast}) \leq \phi(kq^{\ast})$, we have
$$
\sum_{ k \leq \frac{Q}{ q^{*}} \Comment{\substack{ k \leq \frac{Q}{ q^{*}}\\\ell(k)>Q_1}} } \frac{1}{\phi(kq^{\ast})} \leq \sum_{k \leq x^{\frac{1}{2}}} \frac{1}{\phi(kq^{\ast})}
\leq \frac{1}{\phi( q^{\ast} )} \sum_{ k \leq x^{\frac{1}{2}} } \frac{1}{\phi(k)} 
\leq \frac{E_0}{\phi( q^{\ast} )} \log (ex^{\frac{1}{2}})
< \frac{5 E_0}{4 \phi( q^{\ast} )} \log x,
$$
where the last inequality assumes $x \geq 4$.  Hence
$$
\sum_{\substack{q \leq Q \\ \ell(q)>Q_1}}\frac{1}{\phi(q)} \sum_{\hspace{-0 em}\chi \hspace{0.15 em} (q)} \max_{2 \leq y \leq x} \abs{\psi^{\prime}(y,\chi^{\ast})} 
\leq \frac{5 E_0}{4} (\log x) \sum_{\substack{ q \leq Q \\ \ell(q)>Q_1}}\frac{1}{\phi(q)} \mathop{{\sum_{\hspace{-0 em}\chi \hspace{0.15 em} (q)}}^{\ast}} \max_{2 \leq y \leq x} \abs{\psi^{\prime}(y,\chi)}.
$$
For $q>1$ if $\chi$ is a primitive character (mod $q$) then $\chi$ is non-principal and $\psi(y,\chi) = \psi^{\prime}(y,\chi)$.  Therefore, since $Q_1 \geq 1$, we can replace $\psi^{\prime}(y,\chi)$ by $\psi(y,\chi)$ on the right-hand side of the last inequality.  
Thus it suffices to show
$$
\sum_{\substack{q \leq Q \\\ell(q)>Q_1}}\frac{1}{\phi(q)} \mathop{{\sum_{\chi}}^{\ast}} \max_{2 \leq y \leq x} \abs{\psi(y,\chi)}
\leq \frac{4(c_1-1)}{5 E_0}  \rbr{ 4\frac{x}{Q_1} + 4x^{\frac{1}{2}}Q + 18x^{\frac{2}{3}}Q^{\frac{1}{2}} + 5x^{\frac{5}{6}} \log \rbr{ \frac{eQ}{Q_1} } } \rbr{\log x}^{\frac{7}{2}}.
$$
We will actually prove something slightly stronger.
Putting
$$
S(q) = \frac{q}{\phi(q)} \mathop{{\sum_{\chi}}^{\ast}} \max_{2 \leq y \leq x} \abs{\psi(y,\chi)},
$$
we have by partial summation that
\begin{align*}
\sum_{Q_1 < q \leq Q}\frac{1}{\phi(q)} \mathop{{\sum_{\chi}}^{\ast}} \max_{2 \leq y \leq x} \abs{\psi(y,\chi)} 
&= \frac{1}{Q} \sum_{q \leq Q} S(q) - \frac{1}{Q_1} \sum_{q \leq Q_1} S(q) + \int_{Q_1}^{Q} \left(\sum_{q \leq t} S(q)\right) \frac{dt}{t^2}. 
\end{align*}

Then, using \eq{MVT} to estimate the sums of the form $\sum S(q)$, the above is
\begin{align*}
\Comment{
&< \frac{4(c_1-1)}{5E_0} \rbr{\frac{x}{Q} + Q x^{\frac{1}{2}} + x^{\frac{5}{6}}} (\log x)^4 + \frac{c-1}{E_0} \int_{Q_1}^{Q} \rbr{ \frac{x}{t^2} + x^{1/2} + \frac{x^{\frac{5}{6}}}{t} } (\log x)^{\frac{7}{2}} dt \\
}
&\leq \frac{4(c_1-1)}{5E_0} \rbr{ 4\frac{x}{Q} + 2x^{\frac{1}{2}}Q + 6 x^{\frac{2}{3}} Q^{\frac{1}{2}} + 5x^{\frac{5}{6}} 
+ \int_{Q_1}^{Q} \rbr{ \frac{4x}{t^2} + 2x^{\frac{1}{2}} + \frac{6x^{\frac{2}{3}}}{t^{\frac{1}{2}}} + \frac{5x^{\frac{5}{6}}}{t} } dt   } (\log x)^{\frac{7}{2}} \\
&\leq \frac{4(c_1-1)}{5E_0} \rbr{ 4\frac{x}{Q_1} + 4x^{\frac{1}{2}}Q + 18 x^{\frac{2}{3}} Q^{\frac{1}{2}} + 5x^{\frac{5}{6}} \log \rbr{ \frac{eQ}{Q_1} } } (\log x)^{\frac{7}{2}}
\end{align*}
as required.
\end{proof}

\section{Proof of Corollary \ref{CEBVT}}

\begin{proof} 
Define
\begin{align*}
\pi_{1}(y) = \sum_{2 \leq n \leq y} \frac{\Lambda(n)}{\log n} = \sum_{p^k \leq y} \frac{1}{k}
\end{align*}
and
\begin{align*}
\pi_{1}(y;q,a) \quad = \mathop{\sum_{2 \leq n \leq y}}_{n \equiv a \text{ (mod $q$)}} \frac{\Lambda(n)}{\log n} \quad = \mathop{\sum_{p^k \leq y}}_{p^k \equiv a \text{ (mod $q$)}} \frac{1}{k}.
\end{align*}
We have
\begin{align*}
\pi_1(y;q,a) - \pi(y;q,a) 
= 
\sum_{2 \leq k \leq \frac{\log y}{\log 2}} \mathop{\sum_{p \leq y^{1 / k}}}_{p^k \equiv a \text{ (mod $q$)}} \frac{1}{k}
\leq
\sum_{2 \leq k \leq \frac{\log y}{\log 2}} \frac{\pi(y^{\frac{1}{2}})}{2} 
< 
2y^{\frac{1}{2}},
\end{align*}
where the last inequality follows from \eqref{pi upper bound}. 
Similarly,
$$
\pi_1(y) - \pi(y) < 2y^{\frac{1}{2}}.
$$
Moreover, by partial summation,
\begin{align*}
\abs{\pi_1(y;q,a) - \frac{\pi_1(y)}{\phi(q)}} &= \abs{\frac{\psi(y;q,a) - \psi(y) / \phi(q)}{\log y} -\int_{2}^{y} \frac{\psi(t;q,a) - \psi(t) / \phi(q)}{t \log^2 t} dt} \\
&\leq \frac{1}{\log 2} \abs{\psi(y;q,a) - \frac{\psi(y)}{\phi(q)}}+ \max_{2 \leq t \leq y} \abs{\psi(t;q,a) - \frac{\psi(t)}{\phi(q)}} \left(\frac{1}{\log{2}}-\frac{1}{\log{y}}\right).
\end{align*}
It then follows by the triangle inequality that the left hand side of (\ref{EBVTpi}) is
\begin{align*}
&\leq \frac{2}{\log 2} \sum_{\substack{q \leq Q\\ \ell(q)>Q_1}}  \max_{2 \leq y \leq x} \mathop{\max_{a}}_{(a,q)=1} \abs{ \psi(y,q,a) - \frac{\psi(y)}{\phi(q)} } 
+ 2x^{\frac{1}{2}} \sum_{\substack{q \leq Q \\ \ell(q)>Q_1}} \rbr{1+\frac{1}{\phi(q)}}.
\end{align*}
The first term is estimated by \eq{EBVTpsi}.  So to establish \eq{EBVTpi} we just note that, for $x \geq 4$, 
$$
2x^{\frac{1}{2}} \sum_{\substack{q \leq Q \\ \ell(q)>Q_1}} \rbr{1+\frac{1}{\phi(q)}}
$$ 
is less than the expression on the right of \eq{EBVTpsi} without $c_1$ present.  
\end{proof}


\begin{thebibliography}{MCS96}

\bibitem{B}{\sc E. Bombieri}, On the large sieve, {\it Mathematika} {\bf 12} (1965), 201--225.

\bibitem{BH}{\sc R. C. Baker and G. Harman}, Shifted primes without large prime factors, {\it Acta Arith.} {\bf 83} (1998), no. 4, 331�-361. 


\bibitem{CM}{\sc A. C. Cojocaru and M. R. Murty}, {\it An Introduction to Sieve Methods and Their Applications}, LMS Student Texts 66, Cambridge, 2006.

\bibitem{D}{\sc H. Davenport }, {\it Multiplicative Number Theory, third edition}, Springer-Verlag, 2000.

\bibitem{DIT}{\sc F. Dress, H. Iwaniec, and G. Tenenbaum}, {Sur une somme li\'{e}e \`{a} la fonction M\"{o}bius}, {\it J. Reine. Angew. Math.} {\bf 340} (1983), 53--58.

\bibitem{Du0}{\sc P. Dusart}, In\'{e}galit\'{e} explicites pour $\psi(x)$, $\theta(x)$, $\pi(x)$ et les nombres premiers, {\it C. R. Math. Rep. Acad. Sci. Canada} {\bf 21} (1) (1999), 53--59. 


\bibitem{E}{\sc P. Erd\"{o}s}, On the normal number of prime factors of $p-1$ and some related problems concerning Euler's $\phi$-function, {\it Quart. J. Math.} {\bf 6} (1935), 205--213.


\bibitem{G}{\sc M. Goldfeld}, On the number of primes $p$ for which 
$p+a$ has a large prime factor, {\it Mathematika} {\bf 16} (1969), 23--27.


\bibitem{GR}{\sc A. Granville, O. Ramar\'e}, Explicit bounds on exponential sums and the scarcity of squarefree binomial coefficients, 
{\it Mathematika} {\bf 43} (1996), no. 1, 73--107.

\bibitem{H}{\sc G. Harman}, On the greatest prime factor of $p-1$ with effective constants, {\it Math. Comp.} {\bf 74} (2005), 2035--2041.

\bibitem{LP}{\sc H. W. Lenstra and C. Pomerance}, Primality testing with Gaussian periods, preprint, 47 pages.

\bibitem{Mo}{\sc H.L. Montgomery}, The analytic principle of the large sieve, {\it Bull. Amer. Math. Soc.} {\bf 84} (1978), no. 4, 547�-567. 

\bibitem{MV}{\sc H. L. Montgomery and R. C. Vaughan}, The large sieve, {\it Mathematika} {\bf 20} (1973), 119--134.

\bibitem{MO}{\sc Y. Motohashi}, A note on the least prime in an arithmetic progression with a prime difference, {\it Acta Arith.} {\bf 17} (1970), 283--285.


\bibitem{M}{\sc M. R. Murty}, {\it Problems in Analytic Number Theory}, Springer-Verlag, 2001.

\bibitem{P}{\sc C. Pomerance}, Remarks on the P\'{o}lya-Vinogradov inequality,  {\it Integers}, {\bf 11A} (2011), Article 19, 11 pages.

\bibitem{RS1}{\sc J. B. Rosser and L. Schoenfeld}, Approximate formulas for some functions of prime numbers, {\it Illinois J. Math.} {\bf 6} (1962), 64--94.

\bibitem{T}{\sc N. M. Timofeev}, The Vinogradov-Bombieri theorem, (English) {\it Math. Notes}, {\bf 38} (1985),
947--951.

\bibitem{V1}{\sc R. C. Vaughan}, On the number of solutions of the equation $p=a+n_1\cdots n_k$ with
$a<p\leq x$, {\it J. London Math. Soc.}(2), {\bf 6} (1972), 43--55.

\bibitem{V2}{\sc R. C. Vaughan}, On the number of solutions of the equation $p+n_1\cdots n_k=N$, {\it J. London Math. Soc.}(2), {\bf 6} (1973), 326--328.


\bibitem{V4}{\sc R. C. Vaughan}, Sommes trigonom{\'e}triques sur les nombres premiers, {\it C. R. Acad. Sci. Paris S�r. A-B}, \textbf{285} (1977), no. 16, A981�-A983.

\bibitem{V5}{\sc R. C. Vaughan}, An elementary method in prime number theory, {\it Acta Arith.} {\bf 37} (1980), 111-115.




\end{thebibliography}
\end{document}